\numberwithin{equation}{section}
\newcounter{dummy}
\newcommand\myitem[1][]{\item[#1]\refstepcounter{dummy}\def\@currentlabel{#1}}
\newtheorem{theorem}{Theorem}
\numberwithin{theorem}{section}
\newtheorem{lemma}[theorem]{Lemma}
\newtheorem{definition}[theorem]{Definition}
\newtheorem{corollary}[theorem]{Corollary}
\newtheorem*{thm*}{Theorem}
\newtheorem*{prop*}{Proposition}
\numberwithin{equation}{section}
\theoremstyle{remark}
\newtheorem{remark}[theorem]{Remark}
\newtheorem{example}[theorem]{Example}
\newcommand{\A}{\mathscr{A}}
\newcommand{\Abb}{\mathbb{A}}
\newcommand{\R}{\mathbb{R}}
\newcommand{\N}{\mathbb{N}}
\newcommand{\Z}{\mathbb{Z}}
\newcommand{\C}{\mathbb{C}}
\newcommand{\D}{\mathscr{D}}
\newcommand{\LL}{\mathcal{L}}
\newcommand{\QA}{{Q}_{\A}}
\newcommand{\QTA}{{Q}_{\widetilde{\A}}}
\newcommand{\dx}[1][x]{\,\textup{d}{#1}}
\newcommand{\ds}{\,\textup{d}s}
\newcommand{\weakto}{\rightharpoonup}
\DeclareMathOperator{\Lin}{Lin}
\DeclareMathOperator{\Image}{Im}
\DeclareMathOperator{\sym}{sym}
\DeclareMathOperator{\divergence}{div}
\newcommand{\dd}{\,\mathrm{d}}
\definecolor{Gump}{rgb}{0,0.6,0.4}
\definecolor{Hanks}{rgb}{0.7,0.3,0.1}
\newcommand{\Qd}{\mathcal{Q}}
\newcommand{\Pd}{\mathcal{P}}
\newcommand{\Sd}{\mathcal{S}}
\newcommand{\Rd}{\mathcal{R}}
\newcommand{\zeromean}{\#}
\newcommand{\zeromeansp}{{\text{\tiny{{/\kern-.2em /}}}}}
\newcommand{\VSob}{\overset{\smash{\circ}}{\mathrm{W}}}
\newcommand{\WSob}{\mathrm{W}}
\newcommand{\clec}[2][0]{\kern-#1em\overset{\textnormal{\tiny{#2}}}{\lesssim}}  
\newcommand{\commented}[3][0]{\kern-#1em\overset{\textnormal{\tiny{#2}}}{#3}}
\newcommand{\txtin}[1][.5]{\kern#1em\text{in}\ }  
\newcommand{\txton}[1][.5]{\kern#1em\text{on}\ }  
\newcommand{\txtas}[1][.5]{\kern#1em\text{as}\ }  
\newcommand{\txtfor}[1][.5]{\kern#1em\text{for}\ }  
\newcommand{\txtae}[1][.5]{\kern#1em\text{a.e.}\ } 
\newcommand{\txtforae}[1][.5]{\kern#1em\text{for a.e.}\ } 
\newcommand{\txtforall}[1][.5]{\kern#1em\text{for all}\ }
\newcommand{\txtat}[1][.5]{\kern#1em\text{at}\ }  
\newcommand{\txtand}[1][.5]{\kern#1em\text{and}\ }  
\newcommand{\txt}[2][.5]{\kern#1em\textnormal{#2}\ }
\newcommand{\eps}{\varepsilon}  
\renewcommand{\phi}{\varphi}
\newcommand{\cptmap}{\hookrightarrow\hookrightarrow}
\newcommand{\wconv}[1][]{%
	\relax\if@display
	\overset{#1}{\xrightharpoonup{\quad}}
	\else
	\xrightharpoonup{#1}
	\fi
}
\newcommand{\conv}[1][]{%
	\relax\if@display
	\overset{#1}{\xrightarrow{\quad}}
	\else
	\xrightarrow{#1}
	\fi
}
\newcommand{\longto}{
	\relax\if@display
	\xrightarrow{\quad}
	\else
	\xrightarrow{}
	\fi
}
\newcommand{\dist}{\operatorname{dist}}
\newcommand{\norm}[2]{\left\Vert{#1} \right\Vert_{#2}}  
\newcommand{\morm}[2]{\left\vert \kern-.1em\left\vert\kern-.1em \left\vert{#1}\right\vert\kern-.1em\right\vert\kern-.1em\right\vert_{#2}}
\newcommand{\pair}[2]{\left\langle #1 \right\rangle_{#2}}
\newcommand{\abs}[1]{\left\vert#1\right\vert}  
\newcommand{\signabs}[2]{\left\vert {#1} \right\vert^{{#2}}\!#1 }
\newcommand{\dell}{\partial}
\renewcommand{\div}{\operatorname{div}}  
\newcommand{\diffD}{\operatorname D\!} 
\newcommand{\curl}{\operatorname{curl}}
\newcommand{\IR}{\mathbb R}
\newcommand{\IC}{\mathbb C}
\newcommand{\IZ}{\mathbb Z}
\newcommand{\IN}{\mathbb N}
\newcommand{\IT}{\mathbb T}
\newcommand{\IA}{\mathbb A}
\newcommand{\leb}[1]{L_{{#1}}}
\newcommand{\hoeldC}[1]{\mathrm{C}^{{#1}}}
\newcommand{\sobW}[2]{\mathrm{W}^{{#1}}_{{#2}}}
\newcommand{\fourF}{\mathcal F}
\newlist{proofsteps}{enumerate}{3}
\setlist[proofsteps]{
	label*=\textbf{Step~\arabic*.},
	ref={Step~\arabic*.},
	leftmargin=0em,
	itemindent=*
}
\setlist[proofsteps,2]{
	label*=\textbf{\arabic*.},
	ref={\arabic*},
	leftmargin=0em,
	align=left
}
\setlist[proofsteps,3]{
	label*=\textbf{\arabic*},
	ref={\arabic*},
	leftmargin=0em,
	align=left
}
\begin{document}
\title[Constitutive laws in parabolic problems]{A variational view on constitutive laws in parabolic problems}
\author[Schiffer,~Xylander]{Stefan Schiffer, Espen Xylander}
\address{Max-Planck Institute for Mathematics in the Sciences} 
\email{stefan.schiffer@mis.mpg.de}
\address{Institute of Analysis, Dynamics and Modeling, University of Stuttgart}
\email{espen.xylander@iadm.uni-stuttgart.de}
\subjclass[2020]{35Q30,49J45,76D05}
\keywords{parabolic equations, $\A$-quasiconvexity, lower semicontinuity, fluid mechanics} 

\begin{abstract}
We consider a variational approach to solve parabolic problems by minimising a functional over time and space. 
To achieve existence results we investigate the notion of $\mathscr{A}$-quasiconvexity for non-homogeneous operators in anisotropic spaces.
The abstract theory is then applied to formulate a variational solution concept for the non-Newtonian Navier--Stokes equations.
\end{abstract}

\maketitle
\section{Introduction}
In this work we introduce a generalised solution concept to the non-Newtonian Navier-Stokes system, that is the set of equations \begin{equation} \label{intro:NS}
	\begin{cases}
		\partial_t u + \divergence (u \otimes u) = \divergence \sigma - \nabla \pi& \text{in } (0,T) \times \Omega, \\
		\divergence u =0 & \text{in } (0,T) \times \Omega, \\
		u(0,\cdot) = u_0 & \text{in } \Omega,
	\end{cases}
\end{equation}
where \begin{itemize}
    \item $u \colon (0,T) \times \Omega \to \IR^d$ is the \emph{velocity} of a fluid;
    \item $\sigma \colon (0,T) \times \Omega \to \IR^{d \times d}_{\sym,0}$ denotes (deviatoric) \emph{stress}; here $\IR^{d \times d}_{\sym,0}$ is the space of symmetric and trace-free matrices;
    \item $\pi \colon (0,T) \times \Omega \to \R$ is the \emph{pressure}.
\end{itemize} 

For the remainder of this introduction, we focus on periodic functions on a cube, i.e. $\Omega=\IT_d$ to avoid a detailed discussion of boundary values. 
Later, however, we also include Dirichlet boundary conditions on Lipschitz domains.

System \eqref{intro:NS} may be derived from general considerations such as mass and momentum conservation. 
In an idealised incompressible world it models a variety of fluids and in the form presented above it is \emph{independent} of the material. 
The difference between distinct materials, e.g. water and oil, lies in the \emph{constitutive law} that relates the stress $\sigma$ to other quantities. 
Here, we assume that $\sigma$ depends only on the symmetric gradient, the \emph{rate-of-strain}, by some law of the form
\begin{equation} \label{intro:constlaw}
	\sigma = \sigma(\epsilon), \quad \epsilon = \epsilon(u) = \tfrac{1}{2} \left(\nabla u +(\nabla u)^T \right).
\end{equation}
This assumption is both well explored in a mathematical framework and also, within reasonable confines, motivated by real-world physical flows.

The mathematically most studied case is the \emph{Newtonian} relation 
\begin{equation*}
	\sigma = 2 \mu_0 \epsilon,
\end{equation*}
where $\mu_0$ is the \emph{viscosity} of the fluid. 
This linear relation yields the so-called Newtonian Navier--Stokes equations for $\mu_0 \neq 0$ and, in the degenerate case $\mu_0=0$, the incompressible Euler equations.
While such a law is reasonable for certain fluids such as water, other fluids do not exhibit such behaviour: 
The viscosity $\mu$ might depend on $\epsilon$ for instance which yields a relation 
\begin{equation} \label{intro:gennewton}
	\sigma= \mu( \vert \epsilon \vert) \cdot \epsilon.
\end{equation}
Such a constitutive law is sometimes also referred to as \emph{generalised Newtonian}. 
One of the simplest extensions is the (pure) \emph{power law} $\mu(\epsilon) = \mu_0\, \vert \epsilon \vert^{p-2}$, which gives rise the following non-Newtonian Navier-Stokes equations
 \begin{equation} \label{intro:NS:2}
	\begin{cases}
		\partial_t u + \divergence (u \otimes u) = \mu_0\, \divergence (\vert \epsilon \vert^{p-2} \epsilon) - \nabla \pi & \text{in } (0,T) \times \IT_d \\
		\divergence u =0& \text{in } (0,T) \times \IT_d, \\
		u(0,\cdot) = u_0 & \text{in } \IT_d.
	\end{cases}
\end{equation}
 
\subsection{Variational approaches to Navier--Stokes} 
In elliptic problems it is often rather straightforward to view a partial differential equation as a variational problem by viewing the equation as an Euler--Lagrange equation. 
For parabolic/time-dependent problems like the Navier--Stokes equations such a viewpoint is not as obvious.

One possible approach is to view the non-Newtonian Navier--Stokes equations as some type of gradient flow and equip them with a variational structure via this identification (see \cite{GM12}). 
This approach involves discretising time for very small time intervals and then solving  variational problems consecutively for each time step.

Another variational approach is to minimise a functional in space-time with an additional weight in time, the so-called WIDE (=Weighted Inertia Dissipation Energy) functional (cf. \cite{MO} for that terminology). 
This method also relates to using a gradient flow structure, but with minimising only \emph{one} functional in space-time instead of many (as in the time discretisation setting). 
In the context of the (non-Newtonian) Navier--Stokes equations this was first studied in \cite{OSS} and then later refined in \cite{BS,LSS}.

We choose a third approach that on the one hand has been well-explored 
in the context of solid mechanics (cf. \cite{CMO,CMO2,lienstromberg.etal_2022}). On the other hand, it is also connected to approaches that view the Navier--Stokes system and similar equations as a differential inclusion and apply convex integration techniques, cf. \cite{BV,BMS,CT}.

To explain our idea, we first draw a comparison to a classical approach: 
A core problem while modelling real-world problems is to determine the constitutive relation
\begin{equation} \label{intro:CR}
    \sigma = \sigma(\epsilon).
    \end{equation}
Such a relation is not known a priori and this needs to be determined by data from measurements. 
After possible preprocessing from experimental data, we think of this as strain-stress pairs $(\epsilon,\sigma)$. 
Consistent with assumption \eqref{intro:constlaw} we assume that no other quantity such as temperature play a role. 
The classical approach is now to fit parameters of a given constitutive law against the observed data (e.g. $\mu_0$ and $p$ for the power-law \eqref{intro:NS:2}), gain a constitutive relation and then solve the well-posed PDE to make predictions.

In contrast, we propose the following: We skip the step in which we extrapolate a constitutive law from data and consider the much larger space of solutions to \eqref{intro:NS} with $\sigma$ as a free variable. 
We then try to find a solution of the undetermined system \eqref{intro:NS} that fits given data best by minimising a functional.
Thus, informally our approach to modelling viscous fluids reads:
	\begin{center}
	\textit{Among all physically possible flows,\\ select one that matches the material properties best.}
\end{center}

In a certain sense this is close to approaches that see fluid equations as differential inclusions. 
There, for instance, one constructs solutions to the system
\begin{equation} \label{intro:NS3}
	\begin{cases}
		\partial_t u + \divergence (u \otimes u) = \mu_0 \,\Delta u - \nabla \pi + \divergence R& \text{in } (0,T) \times \IT_d, \\
		\divergence u =0 & \text{in } (0,T) \times \IT_d,\\
		u(0,\cdot) = u_0 & \text{in } \IT_d,
	\end{cases}
\end{equation}
for an additional stress $R$ and then lets $R \to 0$ in a suitable function space (such as $L^1((0,T) \times \IT_d)$). 
This stress $R$ then is an indicator on \emph{how far away} a given $u$ is to the solution of Navier--Stokes in the sense that the differential condition is \emph{almost satisfied}. 
Our idea is similar, in the sense that we try to determine $\sigma$, so that we solve system \eqref{intro:NS} exactly and then determine \emph{how close} we are to the given data.


\subsection{General setting}
We give an overview over the setup first.

Instead of prescribing the exact relation $\epsilon \mapsto \sigma(\epsilon)$ we consider a function
\begin{equation*}
    (\epsilon,\sigma)\longmapsto f(\epsilon,\sigma)\in [0,\infty).
\end{equation*}
that measures how much a given strain-stress pair deviates from the underlying data set.
In particular, an $f$ corresponding to a relation $\epsilon \mapsto \sigma(\epsilon)$ ought to satisfy $f(\epsilon,\sigma(\epsilon)) =0$ for all $\epsilon$. 
Given a data set $\D= \{(\epsilon_i,\sigma_i): i \in I\}$ one may instead think of 
\[
	f= \dist((\epsilon,\sigma),\D),
\]
where $\dist$ is a suitable distance function.

With a measure for the point-wise deviation, we can integrate over space and time to obtain the functional
\begin{equation} \label{def:I}
    I(u,\sigma) = \begin{cases}
        \int_0^T \int_{\IT_d} f(\epsilon(u),\sigma) \dd x \dd t & (u,\sigma) \text{ obey \eqref{intro:NS},} \\
        \infty & \text{else}.
    \end{cases}
\end{equation}
The primary objective of the present work is to understand the minimisation problem for $I$.
Minimisers of $I$ can be considered to be generalised solutions to \eqref{intro:NS} in the case where there is no prior-known constitutive law.


\subsection{Overview over the results}
While we do not give the precise statements in this introduction, we briefly give the reader an idea of the goals of this article in terms of developing a variational solution concept based on the minimisation of the functional $I$.

Motivated by the power-law relation \eqref{intro:NS:2}, we fix the following assertions on $f$:
\begin{enumerate} [label=(A\arabic*)]
    \item \label{it:global:assf:growth} 
        $f$ satisfies a growth condition from above: 
        \[
        f(\epsilon,\sigma) \leq C_1\,(1 + \vert \epsilon \vert^p + \vert \sigma \vert^q),
        \]
        for dual exponents $p,q$, i.e. $\tfrac{1}{p}+\tfrac{1}{q}=1$;
    \item  \label{it:global:assf:coercive} 
        $f$ satisfies a coercivity condition from below:
        \[
        f(\epsilon,\sigma) \geq C_2\,(\vert \epsilon \vert^p+\vert \sigma \vert^q) - C_3 - C_4\, \epsilon \cdot \sigma;
        \]
    \item  \label{it:global:assf:qcvx} 
        $f$ satisfies a convexity condition ($\A$-quasiconvexity for suitable $\A$); we comment on this further in Section~\ref{sec:technical}.
\end{enumerate}
Most common constitutive laws, $\sigma=\sigma(\epsilon)$ as in \eqref{intro:gennewton}, may be recast using such a function $f$ satisfying all those assumptions (cf. also Section~\ref{sec:6_applNS_largeP:1_integrandsConstitLaw}).
As an example, the relation $\sigma=\vert \epsilon \vert^{p-2} \epsilon$ can be rephrased as the set of zeros of
\[
f(\epsilon,\sigma) = \tfrac{1}{p}\vert \epsilon \vert^p + \tfrac{1}{q}\vert \sigma \vert^q - \epsilon \cdot \sigma.
\]

We call a minimiser of $I$ defined as in \eqref{def:I} a \emph{variational solution} to the non-Newtonian Navier--Stokes equations.
To prove existence of such, we seek to apply the direct method:
\begin{enumerate}[label=(\roman*)]
    \item Prove that $I$ is coercive, i.e. minimising sequences converge in a suitable topology;
    \item Prove lower semicontinutiy, so that the limit of a minimising sequence is a minimiser.
\end{enumerate}
The main results concerning this problem can now be summarised as follows:

\begin{enumerate} [label=(R\arabic*)]
    \item \label{it:mainResults:1} If $f$ is derived from a constitutive law, then the notion of variational solution is consistent with the notion of a weak solution to the corresponding non-Newtonian Navier--Stokes equation.
    \item \label{it:mainResults:2} In the regime $p \ge \tfrac{3d+2}{d+2}$, we have existence of variational solutions under the aforementioned assumptions (cf. Theorem~\ref{thm:existenceMinimisersLargeP}) and we essentially recover a variational version of classical existence results of \cite{Lady1,Lady2,Lady3,Lionsbook}.
    \item \label{it:mainResults:3} In the regime $p > \tfrac{2d}{d+2}$, the given functional remains sequentially lower semicontinuous (cf. Theorem~\ref{thm:applNS:lsc}). An energy inequality is, however, \emph{not granted} and we only get a minimiser among a (physically reasonable) class of functions that obey an additional a-priori energy inequality (cf. Theorem~\ref{thm:existenceSmallP}). 
    \item \label{it:mainResults:4} We can give an alternative proof for the existence of Leray--Hopf solutions comparable to \cite{FMS,BDF,BDS}  for certain constitutive laws $\sigma=\sigma(\epsilon)$, cf. Theorem~\ref{thm:existenceLerayHopf}. While indirectly using the basic idea of Lipschitz truncation, our proof is based on the variational techniques we develop before.
\end{enumerate}
In terms of \emph{how} those results are achieved as a consequence of a much broader theory, we refer to Section~\ref{sec:technical} below.


\subsection{Some remarks on the solution concept}
We point out some features and issues of the idea that a solution to \eqref{intro:NS:2} is a minimiser of a certain functional. 
First, it is a relaxation of the viewpoint of a differential inclusion (cf. \cite{BV,BMS}): 
A solution to the power-law non-Newtonian Navier--Stokes equations \eqref{intro:NS:2} is precisely characterised by $I(u,\sigma)=0$ with
\[
f(\epsilon,\sigma) = \tfrac{1}{p} \vert \epsilon \vert^p + \tfrac{1}{q} \vert \sigma \vert^q - \epsilon \cdot \sigma.
\]

This also means that we inherit difficulties surrounding small power-law exponents $p < \tfrac{3d+2}{d+2}$, so that we need to approach the problem with more care (cf. Section~\ref{sec:7_applNS_smallP}).

While this setup of the functional in space-time is quite convenient, it in general destroys one of the features of time-dependent equations that we aim to have: \emph{causality}. 
In particular, if $T<T'$ are two times, the restriction of the solution on the larger time interval $(0,T')$ to the smaller $(0,T)$ might not be minimal: 
As time evolves the solution on the smaller interval $(0,T)$ might leave an energy optimal state and another solution might become better in the long run.
In principle, this fundamental issue of the solution concept may be attacked by introducing a weight-in-time (as in \cite{BS,LSS,OSS}) or by discretising-in-time and solving step-by-step (cf. \cite{GM12}). 
This, however, is beyond the scope of this article and we refer to future work.


\subsection{$\A$-quasiconvexity} \label{sec:technical}
We embed our system into an abstract framework. 
In particular, we rewrite system \eqref{intro:NS} (also cf. \cite{DLS2,DLS1,DLS3,lienstromberg.etal_2022}) as 
\begin{equation} \label{eq:abstractEQ}
    \A(\epsilon,\sigma) = \Theta (u).
\end{equation}

Here, $\Theta(u)$ shall correspond to the non-linearity  $\divergence( u\otimes u)$. 
Then $\A$ is a linear non-homogeneous differential operator that corresponds to a second-order parabolic problem; i.e. we have \emph{one} derivative in time and \emph{two} derivatives in space.
The precise definition of $\A$ is given in Example \ref{ex:formulations:NSExample}. 

We would then like to apply the theory of $\A$-quasiconvexity and weak lower semicontinuity developed by Fonseca \& M\"uller in \cite{FM} (cf. \cite{DF,FMP,GR2}) to ensure existence of minimisers of the functional $I$. 
Recall that for a differential operator $\A$, a function $f$ is called $\A$-quasiconvex if for any test function, 
\[
\psi \in C^{\infty}(\IT_{d+1};\IR^{d\times d}_{\mathrm{sym},0}\times\IR^{d\times d}_{\mathrm{sym},0}) \quad \text{s.t.}\quad  \int_{\IT_{d+1}} \psi \dx[(t,x)]  =0 \text{ and } \A \psi =0,
\]
it satisfies the Jensen-type inequality
\[
f(\epsilon,\sigma) \leq \int_{\IT_{d+1}} f((\epsilon,\sigma)+\psi(t,x)) \dx[(t,x)].
\]
For \emph{homogeneous} operators $\A$ it is shown in \cite{FM} that $\A$-quasiconvexity corresponds to weak lower semicontinuity of the integrand, which in turn guarantees the existence of minimisers as detailed above.

This theory is not applicable for operators that are genuinely non-homogeneous, i.e. do not satisfy $\A u=0 \Rightarrow \A u_{\lambda} =0$ for $u_{\lambda}(x) = u(\lambda x)$. 
We therefore need to develop the full theory for \emph{anisotropic} spaces. This involves dealing with some new non-linear effects when compared to the homogeneous setting.

Furthermore, due to the incompressibility constraint $\divergence u=0$, the operator $\A$ for the non-Newtonian Navier--Stokes system is actually a \emph{pseudo-differential operator} and we ensure that the methods used for differential operators carry over to the pseudo-differential case.


\subsection{Structure}
The paper is structured into two main parts: a collection of lower semicontinuity results for abstract parabolic differential equations and an application to the non-Newtonian and data-driven Navier--Stokes equations.
First, in Section~\ref{sec:2_Spaces} we introduce the necessary anisotropic function spaces.
We then prove general properties of the linear operator $\A$ in Section~\ref{sec:3_propA}, before we give a detailed treatment of $\A$-quasiconvexity.
Section~\ref{sec:4_wlsc} contains the proof of our main lower semicontinuity result and a brief comment on relaxation.
We refine these results in Section~\ref{sec:5_pseudoDiffOps} to include the case of pseudo-differential constrains.
In the application part of the paper, starting with in Section~\ref{sec:6_applNS_largeP}, we elaborate on constitutive laws and `data-driven' formulations and their relation to \ref{it:mainResults:1}, before we prove \ref{it:mainResults:2}.
Afterwards we discuss issues in the small parameter regime and how to resolve them via \ref{it:mainResults:3} or \ref{it:mainResults:4} throughout Section~\ref{sec:7_applNS_smallP}.


\subsection*{Acknowledgements}
St. S. would like to thank the University of Stuttgart for the kind hospitality during several research states. The research was funded by Deutsche Forschungsgemeinschaft (DFG, German Research Foundation) under Germany's Excellence Strategy -- EXC 2075 - 390740016. We further acknowledge the support by the Stuttgart Center for Simulation Science (SimTech).


\subsection*{Notation}
In the following we will consider domains $\mathcal T\times\Omega$, where
\begin{itemize}
    \item  $\mathcal T = (0,T)$ for some $T>0$ or $\mathcal T=\IT_1$;
    \item $\Omega \subset \IR^d$ is a Lipschitz domain or $\Omega = \IT_d$.
\end{itemize}
We usually write elements of this space as $(t,x) \in \mathcal T\times\Omega$ and refer to it as 'space-time'. 
We use multi-index notation for partial derivatives, $\dell^\nu \coloneqq
\dell_t^{\nu_t}\circ \dell_x^{\nu_x}\coloneqq \dell_t^{\nu_t} \circ \dell_{x_1}^{\nu_{x_1}}\circ\ldots\circ \dell_{x_d}^{\nu_{x_d}}$, where $\nu=(\nu_t,\nu_x)=(\nu_t,\nu_{x_1},\ldots,\nu_{x_d})\in \IN\times \IN^d= \IN^{d+1}$. We write $\nabla u$ to refer to the (distributional) full gradient of a function and $\nabla^k$ for the $k$-th order gradient. 
The symbol $\tfrac 12(\nabla+\nabla^T)$ then denotes the symmetric gradient.

When working with a space $\mathcal X(\mathcal T\times\Omega;\IR^N)$ consisting of functions $\mathcal T\times\Omega\to \IR^N$, we often omit the domain or co-domain if they are clear from the context. 
In these cases we write $\mathcal X(\mathcal T\times\Omega)$ or $\mathcal X$ instead.
Furthermore, we sometimes write $ (\mathcal X_1\times \mathcal X_2)(\mathcal T\times\Omega;\IR^M\times\IR^N) $ for $ \mathcal X_1(\mathcal T\times\Omega;\IR^M)\times \mathcal X_2(\mathcal T\times\Omega;\IR^N) $. 
The same applies to other binary operations on function spaces.

Speaking of function spaces, we denote by $L_p$ the space of functions whose $p$-th power is integrable and by $\WSob^{k}_p$ those functions whose $k$-order distributional derivatives are in $L_p$. We will frequently also use the notation
\[
\WSob^{k_1,k_2}_p \quad \text{and} \quad \WSob^{k_1,k_2}_{p,q}
\]
which is used for anisotropic function spaces, cf. Sections \ref{sec:2_Spaces:3_functionspaces} and \ref{sec:2_Spaces:pblmSpaces} for the exact definition.

For functions $f \in L_1(\IT_{d+1};\C)$ (also $f \in L_1(\IT_{d},\C)$), the discrete Fourier transform is denoted by
\begin{equation}\label{eq:discreteFourier}
	\fourF f(\xi)\coloneqq \int_{\IT_{d+1}} f(z)\, e^{-2\pi i\, \xi\cdot z} \dx[z],\quad \xi\in \IZ^{d+1}.
\end{equation}
For non-integrable functions the Fourier transform may be defined via density.
The inverse Fourier transform of $m \in \ell_{1}(\Z^{d+1};\C)$ is given by
\begin{equation*}
	\fourF^{-1}m(z)\coloneqq \sum_{\xi\in \IZ^{d+1}} m(\xi)\, e^{2\pi i\, \xi\cdot z},\quad z\in \IT_{d+1}.
\end{equation*}

We denote by $C$ a general positive constant that may change from line to line.
Occasionally we highlight the dependence on a parameter e.g. with $C_\eta$ or enumerate different constants in a display with $C_1,\, C_2,\ldots$ etc.
We write $A \sim B$ if there is a constant $C>1$ such that $ C^{-1}\, A \le B \le C\, A$. 

The symbols $\epsilon,\sigma$ always denote either $\IR^m$ or $\IC^m$-valued functions. In contrast, to denote single elements of $\IR^m$ or $\C^m$ we write $\hat\epsilon,\hat\sigma$.
\section{Function spaces for non-homogeneous problems}\label{sec:2_Spaces}

As alluded to in the introduction, we focus on a setting that is applicable to parabolic equations.
Our function spaces are therefore anisotropic and feature different integrability for the arguments $ \epsilon $ and $\sigma$ of the differential operator $ \A(\epsilon,\sigma) $ as in \eqref{eq:abstractEQ}.
To set the scene for a precise definition of $ \A $, we briefly review some notions for homogeneous constant-rank operators.


\subsection{Homogeneous constant-rank operators}\label{sec:2_Spaces:1_consrRankOps}
Consider the differential operator $\Qd$ defined for functions $\sigma \colon \IR^d \to \IR^m$ by
\begin{equation} \label{def:Q}
    \Qd \sigma = \sum_{\vert \alpha \vert = k} Q_{\alpha}\, \partial^{\alpha} \sigma,
\end{equation}
where $Q_{\alpha} \colon \IR^m \to \IR^n$ are linear maps. 
By formally writing $\C^m= \IR^m + i \IR^m$, we may also define this operator on functions $\sigma \colon \IR^d \to \C^m$. 
Since there is usually no ambiguity, we use $\epsilon$ and $\sigma$ both for real-valued and complex-valued functions.

We may define the \emph{Fourier symbol} of the operator $\Qd$ as follows:
\begin{equation} \label{def:FS}
    \Qd [\xi] = (2 \pi i)^k \sum_{\vert \alpha \vert=k} Q_{\alpha}\,\xi^{\alpha} \in \Lin(\C^m;\C^n), \quad \text{for any } \xi \in \IR^{d}.
\end{equation}
 We assume that $\Qd$ satisfies the \emph{constant-rank property} (cf. \cite{Murat,Raita}) in space, i.e. there is $0 \leq r \leq \min\{m,n\}$ such that 
\begin{equation} \label{constantrank:Q}
    \dim \ker \Qd[\xi] =r \quad \text{for all } \xi \in \IR^{d} \setminus \{0\}.
\end{equation}

Furthermore, we define the adjoint $\Qd^{\ast}$ of $\Qd$ on $k$-times differentiable functions $w \colon \IR^d \to \IC^m$ via
\begin{equation}
    \Qd^{\ast}w = \sum_{\vert \alpha \vert =k} Q_{\alpha}^{\ast} \,\partial^{\alpha} w.
\end{equation}
Therefore, $\Qd^{\ast}$ also obeys the constant-rank property and we can define its Fourier symbol as well as the Fourier symbol of $\Qd^{\ast} \circ \Qd$ in the same way as in \eqref{def:FS}.

A constant-rank differential operator $\Pd \colon C^{\infty}(\IR^d;\IC^n) \to C^{\infty}(\IR^d;\IC^m),$
\begin{equation} \label{def:P}
    \Pd u = \sum_{\vert \alpha \vert =k'} P_{\alpha}\, \partial^{\alpha}u,
\end{equation}
is called a potential to $\Qd$ if 
\begin{equation*}
    \Image \Pd [\xi] = \ker \Qd [\xi], \quad \forall \xi \in \IR^{d} \setminus \{0\}
\end{equation*}
(cf. \cite{Raita,DF}).
The existence of such a potential is equivalent to the constant-rank property \cite{Raita}.
Dual to this construction we also call $\Qd$ the \emph{annihilator} for $\Pd$ and, upon taking adjoints, we find that $\Qd^\ast$ is a potential of $\Pd^\ast$.

The operator $\Pd^\ast$ is said to be \emph{spanning} if 
\begin{equation*}
    \operatorname{span} \bigcup_{\xi\in \IR^{d}\setminus\{0\}} \ker \Pd^{\ast}[\xi] = \IC^m.
\end{equation*}

\begin{remark}\label{rem:assumePspanning}
	In the following, we denote a generic \emph{homogeneous} constant-rank operator by $\mathbb{A}$. 
	As it shall act only on space on not on space-time, we assume $\mathbb{A}\colon C^{\infty}(\IR^d;\IR^m) \to C^{\infty}(\IR^d;\IR^n)$.
	The operators $\Pd$ and $\Qd$ are examples of such purely spatial operators. 
	Throughout the paper all operators that act only in space have the constant-rank property and we assume that $\Pd^\ast$ is spanning.
	We reserve the symbol $\A$ for the \emph{parabolic} operator.
\end{remark}

To finish this subsection, we remind the reader of the following fact: 
There is a smooth $(-k)$-homogeneous function $$\IA^{-1} \colon \IR^d \longto \Lin(\IR^n;\IR^m)$$ such that for any $\xi \neq 0$, $\IA^{-1}[\xi]$ reduces to a bijection between $\Image \IA[\xi]$ and $\left(\ker \IA[\xi]\right)^{\perp}$. 
This is called the \emph{Moore--Penrose inverse}, cf. \cite{Penrose_1955,Raita} and we may define the $(-k)$-homogeneous Fourier multiplier
\begin{equation}\label{eq:moorePenrose:multiplierDef}
    \IA^{-1} v \coloneqq \sum_{\xi\in\IZ^{d}} \,
    \, \IA^{-1}[\xi]\,\fourF v(\xi) \, e^{2\pi i\, x\cdot \xi},
    \quad 
     v\in\hoeldC\infty(\IT_d;\IR^M).
\end{equation}
Note that by convention $\IA^{-1}[\xi]=0$ if $\IA[\xi]=0$.


\subsection{Differential operators for parabolic problems}\label{sec:2_Spaces:2_prabolicDiffOps}
We now define  an abstract differential operator for parabolic problems and give a simple example.
We note that, due to the incompressibility constraint, the Navier--Stokes equations do not fit exactly into the setting that we define below.
They require a small adaptation by passing to \emph{pseudo-}differential operators and we refer to Section~\ref{sec:5_pseudoDiffOps} for a detailed treatment of that case.
\smallskip

As hinted to in the introduction, $\epsilon \in C^{\infty}(\IT_{d+1};\C^m)$ might not be the physical variable of interest, but it might arise as a derivative of a physical variable $u$ (e.g. it is the rate-of-strain associated to a velocity field).
Thus, we seek to transfer the physically motivated system
\begin{equation} \label{def:altA:1}
   \begin{cases}
       \partial_t u = \Qd \sigma
       &	\text{in } \IT_{d+1},
   \\  \epsilon = \Qd^\ast u
   &	\text{in } \IT_{d+1},
   \end{cases}
\end{equation}
into equations involving solely $\epsilon$ and $\sigma$:
\begin{equation}\label{def:altA:0}
   \begin{cases}
       \dell_t \epsilon = \Qd^\ast \Qd \sigma
       &	\text{in } \IT_{d+1},
   \\  \Pd^\ast \epsilon=0 &	\text{in } \IT_{d+1},
   \\	\int_{\IT_{d+1}}\epsilon\dx[(t,x)]=0 .
   \end{cases}
\end{equation}
On the space-time torus $\IT_{d+1}$, we can solve the equation $\epsilon =\Qd^\ast u$ using Fourier methods: 
Assuming that $\Qd$ satisfies the constant-rank property and $\epsilon$ fulfills the compatibility condition $\Pd^\ast \epsilon=0$, we can use the (Moore--Penrose) inverse $(\Qd^\ast)^{-1}$ via \eqref{eq:moorePenrose:multiplierDef} to define $u\coloneqq (\Qd^\ast)^{-1}\epsilon$. 
Then $\dell_t\epsilon=\Qd^\ast\Qd \sigma$ becomes
\begin{equation*}
    \dell_t u 
    =   (\Qd^\ast)^{-1}\Qd^\ast\Qd \sigma
    =   \Qd \sigma.
\end{equation*}
Thus, any solution $(\epsilon,\sigma)$ of \eqref{def:altA:0} gives rise to a solution $(u,\epsilon,\sigma)$ of \eqref{def:altA:1} and vice versa.

These considerations motivate the following definition of our main parabolic differential operator. 
\begin{definition} \label{def:A}
    Let $(\epsilon,\sigma) \in C^{\infty}(\IR^{d+1};\C^m \times \C^m)$. 
    We define the differential operator 
    \begin{equation*}
        \A \colon C^{\infty}(\IR^{d+1};\C^m \times \C^m) \to C^{\infty}(\IR^{d+1};\C^m\times \C^n)
    \end{equation*}
    as follows:
    \begin{equation} \label{eq:defA}
    \A(\epsilon,\sigma) 
    =   \begin{pmatrix}
            \partial_t \epsilon - \Qd^{\ast} \Qd \sigma
        \\  \Pd^\ast \epsilon
        \end{pmatrix}.
    \end{equation}
    We define the Fourier symbol for $\xi=(\xi_t,\xi_x) \in \IR^{d+1}$ as 
    \begin{equation}
    \A [\xi](\hat{\epsilon},\hat{\sigma}) 
    =   \begin{pmatrix}
            (2 \pi i)\, \xi_t  \cdot \hat{\epsilon}
            -   \Qd^{\ast} [\xi_x] \circ \Qd [\xi_x] (\hat{\sigma})
        \\  \Pd^\ast[\xi_x]\hat \epsilon
        \end{pmatrix}   
    ,\quad 
        \hat{\epsilon}, \hat{\sigma} \in \C^m.
    \end{equation}
\end{definition}

\noindent To illustrate the setting we present the following example:

\begin{example}[Anisotropic heat flux]
	We consider a simple situation that is connected to the heat equation and therefore might be applied to anisotropic heat flow. 
	With variables $\tau\colon \IR^d\to \IR$ and $J\colon \IR^d\to \IR^d$  representing \emph{temperature} and \emph{heat flux}, we can pose the conservation equations
	\begin{equation*}
		\begin{cases}
			 	\dell_t\tau =\div J
			&\text{in } \IR^{d+1},
			\\ \phi=\nabla \tau
			&\text{in }\IR^{d+1}.
		\end{cases}
	\end{equation*}
	A constitutive law relates the heat flux $J$ to the temperature gradient $\nabla \tau$.
	In our setting, we consider
	\begin{equation*}
		\Qd = \divergence,\quad \Pd^{\ast} = \curl
	\end{equation*}
	and thus obtain the equation
	\begin{equation*}
		\begin{cases}
			     \dell_t \phi -\nabla\div J=0
			&\text{in }\IR^{d+1},
			\\   \curl \phi =0
			&\text{in }\IR^{d+1}.
		\end{cases}
	\end{equation*}
	Observe that if we are given a linear relation $J(\phi) = \phi = \nabla \tau$, we may recover the heat equation for $\tau$.
\end{example}


\subsection{Isotropic and anisotropic function spaces} \label{sec:2_Spaces:3_functionspaces}
In this section, we remind the reader of some well-known function spaces. 
Some of these are suited to anisotropic problems, where we consider different orders of time and space derivatives, while keeping a fixed integrability exponent $p=q=r\in (1,\infty)$.
Later, in Section \ref{sec:2_Spaces:pblmSpaces}, we also discuss \emph{doubly anisotropic} spaces where, in addition, $p\neq q$.

As mentioned in the introduction we work with either $(0,T) \times \Omega$, $\Omega$ Lipschitz domain or $\IT_{d+1}$ as the underlying domain. 
We write $\mathcal T \times \Omega$ for both variants.

We work with function spaces based on $L_p(\mathcal T\times \Omega)$ instead of Bochner-type function spaces. 
To this end, for exponents $1<r<\infty$ and $\alpha,\beta \in \N$, define the anisotropic Sobolev spaces
\begin{equation*}
\begin{aligned}
    \WSob^{\alpha,\beta}_r(\mathcal T\times \Omega) \coloneqq    \Big\{& v \in L_r(\mathcal T\times \Omega) \colon \partial_t^{\alpha'}v \in L_r(\mathcal T \times \Omega), \\ & \nabla_x^{\beta'}v \in L_r(\mathcal T \times \Omega), 
     1 \leq \alpha' \leq \alpha, 1 \leq \beta' \leq \beta\Big\},
\end{aligned}
\end{equation*}
(see e.g. \cite{Amann09}) 
where the distributional derivatives in $t$ and $x$ directions are already $L_r$-functions (for simplicity, we still write $L_r= \WSob^{0,0}_r$). 
The norm of this space is then naturally defined via
    \[
    \Vert v \Vert_{\WSob^{\alpha,\beta}_r(\mathcal T\times \Omega)} = \Vert v \Vert_{L_r(\mathcal T\times \Omega)} + \sum_{\alpha'=1}^{\alpha} \Vert \partial_t^{\alpha'} v \Vert_{L_r(\mathcal T\times \Omega)} + \sum_{\beta'=1}^{\beta} \Vert \nabla_x^{\beta'} v \Vert_{L_r(\mathcal T\times \Omega)}
    \]
By duality with the space of functions with zero trace we may also define spaces with negative exponents, i.e. 
    \[
    \WSob^{-\alpha,-\beta}_r(\mathcal T\times \Omega) \coloneqq \left(\VSob^{\alpha,\beta}_{r'}(\mathcal T\times \Omega) \right)'
    \]
with $\tfrac{1}{r'} + \tfrac{1}{r}=1$, where $\VSob^{\alpha,\beta}_{r'}(\mathcal T\times \Omega)$ is the closure of $C_c^{\infty}(\mathcal T\times \Omega)$ in $\WSob^{\alpha,\beta}_{r'}(\mathcal T\times \Omega)$.

On $\IT_{d+1}$ we can use the discrete Fourier transform \eqref{eq:discreteFourier} to extend the definitions of $\WSob^{\alpha,\beta}_r$ to $\alpha,\beta\in\IR$.
To this end, define $\WSob^{\alpha,\beta}_r(\IT_{d+1})$ to be the closure of the smooth functions on $\IT_{d+1}$ with respect to 
\begin{equation}\label{eq:soboViaFourier}
    \begin{split}
    	\Vert v \Vert_{\WSob^{\alpha,\beta}_r(\IT_{d+1})}
     &\coloneqq \Vert \widetilde{v} \Vert_{L_r(\IT_{d+1})}
     , \\
     \widetilde{v}(t,x) &= \sum_{\xi \in \Z^{d+1}} (1+\vert \xi_t \vert^2)^{\frac\alpha2}\, (1+ \vert \xi_x \vert^2)^{\frac\beta2}\,\fourF{v}(\xi)\, e^{2\pi i\, (t,x) \cdot \xi}.
    \end{split}
\end{equation}

Finally, recall that
\begin{equation*}
	(\VSob^{1,0}_{r'}\cap\VSob^{0,2k}_{r'}) (\mathcal T\times\Omega;\IR^m) 
	\quad\text{and}\quad 
	(\sobW{-1,0}{r}+\sobW{0,-2k}{r}) (\mathcal T\times\Omega;\IR^m),
\end{equation*}
defined as subsets of $ \mathcal D'(\mathcal T\times\Omega;\IR^m) $, are Banach spaces with the norms 
\begin{equation*}
\begin{split}
	\norm{v}{\VSob^{1,0}_{r'}\cap\VSob^{0,2k}_{r'}}
	&\coloneqq 
		\max\left \{
			\norm{v}{\VSob^{1,0}_{r'}} 
			,\,  \norm{v}{\VSob^{0,2k}_{r'}}
		\right \}
	\quad \text{and}
\\	\norm{w}{\WSob^{-1,0}_{r}+\WSob^{0,-2k}_{r}}
	&\coloneqq 
		\inf_{w=w^\mathrm{t}+w^\mathrm{x}}
		\Vert w^\mathrm{t} \Vert_{\WSob^{-1,0}_{r}} 
		+ \Vert w^\mathrm{x} \Vert_{\WSob^{0,-2k}_{r}}.
\end{split}
\end{equation*}
Moreover, the two spaces are canonically dual to each other (cf. \cite[Theorem 2.7.1.]{bergh_interpolation_1976}), 
\begin{equation*}
	\sobW{-1,0}{r}+\sobW{0,-2k}{r} \simeq  \left (\VSob^{1,0}_{r'}\cap\VSob^{0,2k}_{r'}\right )'.
\end{equation*}
Thus, our differential operator $\A$ extends to a bounded linear operator
\begin{equation*}
    \A\colon (\leb r\times\leb r)(\mathcal T\times\Omega;\IR^m\times\IR^m)
    \longto ((\sobW{-1,0}{r}+\sobW{0,-2k}{r})\times\sobW{0,-k'}{r})(\mathcal T\times\Omega;\IR^m\times\IR^n).
\end{equation*}


\subsection{Multiplier, embeddings and compactness}
Below, we commonly use the following observations: 
For the first result on Fourier multipliers we refer to 
\cite[Theorems 4.3.7 and 6.2.4]{Grafakos}, the second is a consequence of the Rellich--Kondrachov compact Sobolev embedding.

\begin{lemma}[Fourier multipliers and compact embeddings] \label{lem:FM}
  Let $1<r<\infty$ and $\alpha,\beta \in \R$.
    \begin{enumerate}[label=(\roman*)]
        \item \label{FM:1} Let $\phi \in C(\IR^{d+1})$ be smooth except possibly on the coordinate planes $\xi_t=0$ or $\xi_x=0$.
        Assume we have for all $\nu =(\nu_t,\nu_{x})\in \IN\times\IN^d$
        \[
        \vert \dell^\nu \phi (\xi) \vert \leq C_\nu\, \vert \xi_t \vert^{-\nu_t}\,  \vert \xi_x \vert^{-\vert \nu_x \vert} ,\quad \xi\in(\IR\setminus\{0\})\times (\IR^d\setminus\{0\}).
        \]
        Then the map $M_{\phi}$ defined by
        \[
        M_{\phi} v = \sum_{\xi \in \Z^{d+1}} \phi \left(\xi\right) \fourF{v}(\xi)\, e^{2\pi i \,(t,x)\cdot \xi}
        \]
        for $v \in C^{\infty}(\IT_{d+1})$ extends to a linear and bounded map from $L_r(\IT_{d+1})$ to $L_r(\IT_{d+1})$.
        \item \label{FM:2} Let $\varepsilon>0$. Then the embedding
            \[
            \WSob^{\alpha,\beta}_r(\IT_{d+1}) \hookrightarrow \hookrightarrow \WSob^{\alpha-\varepsilon,\beta-\varepsilon}_r(\IT_{d+1})
            \]
        is compact.
    \end{enumerate}
\end{lemma}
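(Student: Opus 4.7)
The plan is to reduce both parts to established results in Fourier analysis: part \ref{FM:1} to a product-Mihlin--Hörmander theorem on $\IR\times\IR^d$ via transference, and part \ref{FM:2} to a Fourier-truncation argument that uses part \ref{FM:1} twice.

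For part \ref{FM:1}, my approach is to apply a de Leeuw--type transference principle, which reduces the torus-multiplier problem to the analogous multiplier problem on $\IR^{d+1} = \IR\times\IR^d$. The stated hypothesis
$|\dell^{\nu} \phi(\xi)| \leq C_\nu\, |\xi_t|^{-\nu_t}\,|\xi_x|^{-|\nu_x|}$
is precisely the product (two-parameter) Mihlin--Hörmander condition on $\IR\times\IR^d$, whose $L_r$-boundedness (for $1<r<\infty$) is covered by \cite[Theorems 4.3.7 and 6.2.4]{Grafakos}. Since $\phi\in C(\IR^{d+1})$ is globally continuous and globally bounded (the case $\nu=0$ of the hypothesis), its values at integer lattice points are unambiguous, and the Mihlin $L_r$-bound transfers from $\IR^{d+1}$ to $\IT_{d+1}$.

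For part \ref{FM:2}, take a bounded sequence $(v_k)_k\subset \WSob^{\alpha,\beta}_r(\IT_{d+1})$. Fix $\chi\in C^\infty(\IR)$ with $\chi\equiv 1$ on $[-1,1]$ and $\chi\equiv 0$ outside $[-2,2]$, and for $N\in\IN$ set
\[
\phi_N(\xi)\coloneqq \chi(\xi_t/N)\,\chi(|\xi_x|/N),\qquad \xi=(\xi_t,\xi_x).
\]
A direct computation shows that $\phi_N$ fulfills the Mihlin hypothesis of \ref{FM:1} uniformly in $N$: each derivative is supported where $|\xi_t|$ or $|\xi_x|$ is comparable to $N$, where the scaling matches the required $|\xi_t|^{-\nu_t}|\xi_x|^{-|\nu_x|}$ weight. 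Hence the associated Fourier projection $P_N$ is uniformly bounded on $L_r$ and, by commutation with the multiplier defining $\WSob^{\alpha,\beta}_r$ in \eqref{eq:soboViaFourier}, also on $\WSob^{\alpha,\beta}_r$. Since $P_N v_k$ lies in the finite-dimensional space of trigonometric polynomials of frequency at most $2N$, for each $N$ the sequence $(P_N v_k)_k$ has a subsequence converging in $\WSob^{\alpha-\eps,\beta-\eps}_r$. For the tail $(I-P_N)v_k$, I would verify that the symbol
\[
\psi_N(\xi)\coloneqq (1-\phi_N(\xi))\,(1+|\xi_t|^2)^{-\eps/2}(1+|\xi_x|^2)^{-\eps/2}
\]
again satisfies the hypothesis of \ref{FM:1}, now with constants of order $N^{-\eps}$, since on $\spt(1-\phi_N)$ at least one of $(1+|\xi_t|^2)^{-\eps/2}$, $(1+|\xi_x|^2)^{-\eps/2}$ is $\lesssim N^{-\eps}$. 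Combined with \eqref{eq:soboViaFourier} this yields
$\Vert (I-P_N)v_k\Vert_{\WSob^{\alpha-\eps,\beta-\eps}_r}\lesssim N^{-\eps}\,\Vert v_k\Vert_{\WSob^{\alpha,\beta}_r}$, and a diagonal subsequence in $N$ finishes the compactness argument.

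The main technical obstacle sits in part \ref{FM:1}: the product-type hypothesis allows genuine singularities of $\phi$ on the coordinate planes $\{\xi_t=0\}\cup\{\xi_x=0\}$, so a naive application of a single-parameter Mihlin theorem is not available. A clean way around this is to truncate $\phi$ to be supported away from the coordinate planes, apply the multi-parameter Mihlin theorem with a bound uniform in the truncation, and then pass to the limit using the continuity of $\phi$ and dominated convergence on the Fourier side. Beyond this, the Mihlin estimate for $\psi_N$ in part \ref{FM:2} is routine but must be carried out by splitting into the two regions $\{|\xi_t|\ge N\}$ and $\{|\xi_x|\ge N\}$ to keep track of the product structure.
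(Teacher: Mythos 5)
Your proposal is correct. For part \ref{FM:1} you take exactly the route the paper has in mind: the citation to \cite[Theorems 4.3.7 and 6.2.4]{Grafakos} is precisely de~Leeuw--type transference combined with the Marcinkiewicz (product Mihlin) multiplier theorem on $\IR\times\IR^d$. One small misdiagnosis: you call the singularities of $\phi$ on the coordinate planes ``the main technical obstacle'' and propose a truncation-and-limit workaround, but the Marcinkiewicz theorem is formulated precisely to tolerate such singularities, so no additional truncation argument is needed there; the hypothesis $\phi\in C(\IR^{d+1})$ is only needed to make the restriction to $\IZ^{d+1}$ unambiguous for transference.

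For part \ref{FM:2} you genuinely diverge from the paper. The paper disposes of this claim in one clause, ``a consequence of the Rellich--Kondrachov compact Sobolev embedding,'' whereas you give a self-contained Fourier-side argument: uniform $L_r$ bounds on the low-frequency cutoffs $P_N$ via \ref{FM:1}, finite-dimensionality of their ranges, and a quantitative tail estimate $\Vert (I-P_N)v\Vert_{\WSob^{\alpha-\eps,\beta-\eps}_r}\lesssim N^{-\eps}\Vert v\Vert_{\WSob^{\alpha,\beta}_r}$ obtained by verifying that the symbol $\psi_N$ obeys the product Mihlin bounds with constant $O(N^{-\eps})$, followed by a diagonal subsequence. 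I checked the two key symbol estimates (uniform boundedness of $\phi_N$ and the $N^{-\eps}$ decay for $\psi_N$): both go through after splitting into the regions $\{|\xi_t|\gtrsim N\}$ and $\{|\xi_x|\gtrsim N\}$, as you indicate, and the commutation of $P_N$ with the defining multiplier in \eqref{eq:soboViaFourier} is immediate for Fourier multipliers. The trade-off is that the paper's appeal to Rellich--Kondrachov is shorter but glosses over how to apply an isotropic compact embedding to the anisotropic scale $\WSob^{\alpha,\beta}_r$ for non-integer $\alpha,\beta$, whereas your truncation argument is longer but closes that gap explicitly, stays entirely on the Fourier side, and reuses only part \ref{FM:1}. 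Both are valid; yours is the more careful of the two.
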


\begin{remark}\label{rem:spatialOpsAsMultiplier}
	For a constant-rank operator $\IA$ the multiplier $\phi(\xi) = (1+\abs{\xi_x}^2)^{-k/2}\IA[\xi_x]$ is precisely of the form in \ref{FM:1}.
	Hence, $\IA\colon \sobW{\alpha,\beta}{r}(\IT_{d+1})\to \sobW{\alpha,\beta-k}{r}(\IT_{d+1})$ is bounded. 
	For an operator $\IA^{-1}$ as in \eqref{eq:moorePenrose:multiplierDef}, the  map $\xi\mapsto (1+\abs{\xi_x}^2)^{k/2} \IA^{-1}[\xi_x] $ coincides with a suitable smooth function $\widetilde \phi$ in a neighbourhood of the integer grid $\IZ^{d+1}$ (due to $\IA^{-1}[0]=0$). 
	Thus, we also get an extension of \eqref{eq:moorePenrose:multiplierDef} to a continuous map $\IA^{-1}\colon \sobW{\alpha,\beta}{r}(\IT_{d+1})\to \sobW{\alpha,\beta+k}{r}(\IT_{d+1})$. 
\end{remark}

Observe that the embedding in Lemma~\ref{lem:FM} \ref{FM:2} is \emph{not} compact unless we lower the differentiability in time and in space simultaneously;
but the regularity in \eqref{eq:defA} only diminishes the differentiability in time or in space.
Therefore, the following lemma is invaluable.

\begin{lemma}\label{lem:compactembedding}
	Let $1<r<\infty$, $k\in \N$. Then
	\begin{enumerate}[label=(\roman*)]
		\item \label{cptemb:1} $L_r(\IT_{d+1}) \hookrightarrow \hookrightarrow (\WSob^{-1,0}_r + \WSob^{0,-2k}_r)(\IT_{d+1})$;
		\item \label{cptemb:2} $\WSob^{0,-2k+1}_r(\IT_{d+1}) \hookrightarrow \hookrightarrow (\WSob^{-1,0}_r + \WSob^{0,-2k}_r)(\IT_{d+1})$.
	\end{enumerate}
\end{lemma}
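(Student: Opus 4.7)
The plan is to handle both items with a common frequency-splitting argument in the spatial Fourier variable. The target space is a sum, so it suffices to decompose each $ v_n $ into a "low spatial frequency" part, which we treat in $ \WSob^{-1,0}_r $ via a one-dimensional Rellich step, and a "high spatial frequency" part, which we treat in $ \WSob^{0,-2k}_r $ using the negative spatial smoothing. Concretely, fix a smooth radial cutoff $ \eta\in C^\infty_c(\IR^d) $ with $ \eta\equiv 1 $ on $ \{|\xi_x|\le 1\} $ and $ \operatorname{supp} \eta\subset \{|\xi_x|\le 2\} $, set $ \eta_N(\xi_x)=\eta(\xi_x/N) $, and let $ P_N\coloneqq M_{\eta_N} $ denote the corresponding spatial Fourier projection.

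Step 1 (high-frequency smallness). For \ref{cptemb:1}, the multiplier
$ \phi_N(\xi)=(1+|\xi_x|^2)^{-k}(1-\eta_N(\xi_x)) $ satisfies $ |\partial^\nu(N^{2k}\phi_N)(\xi)|\lesssim|\xi|^{-|\nu|} $ uniformly in $ N $. Lemma~\ref{lem:FM}\ref{FM:1}, applied to the rescaled symbol $ N^{2k}\phi_N $, therefore yields
\begin{equation*}
    \Vert v - P_N v\Vert_{\WSob^{0,-2k}_r}
    \sim \Vert M_{\phi_N} v\Vert_{L_r}
    \lesssim N^{-2k}\,\Vert v\Vert_{L_r}.
\end{equation*}
For \ref{cptemb:2}, we instead factor the multiplier as $ (1+|\xi_x|^2)^{-1/2}(1-\eta_N)\cdot (1+|\xi_x|^2)^{(-2k+1)/2} $; the first factor has Mikhlin-norm $ O(N^{-1}) $ by the same rescaling, and the second factor identifies $ \Vert v\Vert_{\WSob^{0,-2k+1}_r} $ with an $ L_r $-norm, so
\begin{equation*}
    \Vert v - P_N v\Vert_{\WSob^{0,-2k}_r}\lesssim N^{-1}\,\Vert v\Vert_{\WSob^{0,-2k+1}_r}.
\end{equation*}
In either case, the high-frequency piece is $ o(1) $ uniformly in the bounded sequence as $ N\to\infty $.

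Step 2 (low-frequency compactness). For each fixed $ N $ the projection $ P_N v_n $ has spatial Fourier support in the finite set $ \{\xi_x\in\IZ^d:|\xi_x|\le 2N\} $, so it expands as
\begin{equation*}
    P_N v_n(t,x)=\sum_{|\xi_x|\le 2N} a_n^{\xi_x}(t)\,e^{2\pi i\,x\cdot\xi_x},
    \qquad a_n^{\xi_x}\in L_r(\IT_1).
\end{equation*}
Jensen's inequality gives $ \Vert a_n^{\xi_x}\Vert_{L_r(\IT_1)}\le \Vert P_N v_n\Vert_{L_r(\IT_{d+1})} $, and on the subspace with finite spatial Fourier support all $ \WSob^{0,\beta}_r $-norms are equivalent (with $ N $-dependent constants), so the assumed bound in either $ L_r $ or $ \WSob^{0,-2k+1}_r $ controls the $ L_r(\IT_1) $ norms of the finitely many scalar coefficients $ a_n^{\xi_x} $. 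Applying the one-dimensional Rellich embedding $ L_r(\IT_1)\cptmap \WSob^{-1}_r(\IT_1) $ to each coefficient and performing a diagonal extraction over the finitely many $ \xi_x $, we find a subsequence along which every $ a_n^{\xi_x} $ converges in $ \WSob^{-1}_r(\IT_1) $. The triangle inequality together with the identity $ \Vert a\,e^{2\pi i\,x\cdot\xi_x}\Vert_{\WSob^{-1,0}_r(\IT_{d+1})}=\Vert a\Vert_{\WSob^{-1}_r(\IT_1)} $ then upgrades this to convergence of $ P_N v_n $ in $ \WSob^{-1,0}_r $.

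Step 3 (diagonal argument). For each $ N\in\IN $ extract a subsequence along which $ P_N v_n $ converges in $ \WSob^{-1,0}_r $, and then diagonalise in $ N $. Writing $ v_n=P_N v_n+(v_n-P_N v_n) $ and using the sum-space inequality $ \Vert \cdot\Vert_{\WSob^{-1,0}_r+\WSob^{0,-2k}_r}\le \Vert P_N\cdot\Vert_{\WSob^{-1,0}_r}+\Vert (I-P_N)\cdot\Vert_{\WSob^{0,-2k}_r} $, Step~1 lets us make the second summand arbitrarily small uniformly in $ n $ by choosing $ N $ large, while Step~2 renders the first summand Cauchy along the diagonal subsequence; this yields the claimed compact embeddings.

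The main obstacle is that, in contrast to the isotropic Lemma~\ref{lem:FM}\ref{FM:2}, we are forced to lower regularity in only one of the two directions at a time, so ordinary Rellich cannot be invoked directly. What rescues the argument is precisely the sum structure of the target space, which permits the decoupling into a low-frequency piece that is compact purely by a $ 1 $-dimensional Rellich in time and a high-frequency piece that is small purely by a Mikhlin-type estimate in space; verifying that the symbol $ (1+|\xi_x|^2)^{-k}(1-\eta_N) $ indeed satisfies the scaling-uniform Mikhlin bounds needed for Lemma~\ref{lem:FM}\ref{FM:1} is the only technical point requiring care.
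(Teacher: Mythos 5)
Your proof is correct, but it takes a genuinely different route from the paper's. The paper applies the isotropic compact embedding of Lemma~\ref{lem:FM}\ref{FM:2} to lose a small \emph{fixed} amount of regularity simultaneously in time and space, landing in the intermediate space $\WSob^{-2k/(2k+1),-2k/(2k+1)}_r$, respectively $\WSob^{-1/(2k+1),-2k+2k/(2k+1)}_r$, and then uses the parabolic-scale splitting operators $\Phi^{\mathrm{t}}_{2k},\Phi^{\mathrm{x}}_{2k}$ of Lemma~\ref{lem:spacetimeConverter} (which cut along $(1+|\xi_t|^2)^{1/2}\sim(1+|\xi_x|^2)^{k}$) to embed that intermediate space \emph{continuously} into $\WSob^{-1,0}_r+\WSob^{0,-2k}_r$; so compactness is extracted first, all at once, and then the result is split. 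You instead perform a variable-parameter, purely spatial Littlewood--Paley truncation $P_N$: the tail $(I-P_N)v_n$ is bounded directly in $\WSob^{0,-2k}_r$ by a rescaled Mikhlin estimate, the block $P_N v_n$ is reduced to finitely many one-dimensional Rellich steps in $t$ acting on the spatial Fourier coefficients, and you conclude by diagonalising in $N$. Both proofs combine a Rellich step with Fourier-multiplier bounds, but yours avoids Lemma~\ref{lem:spacetimeConverter} and the full Rellich--Kondrachov on $\IT_{d+1}$ entirely, trading them for elementary 1D Rellich on $\IT_1$ and a diagonal extraction -- more self-contained, at the cost of somewhat heavier bookkeeping. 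One minor slip: Lemma~\ref{lem:FM}\ref{FM:1} requires the anisotropic bound $|\partial^\nu\phi(\xi)|\lesssim|\xi_t|^{-\nu_t}|\xi_x|^{-|\nu_x|}$, not $|\xi|^{-|\nu|}$ as you wrote; for your spatial symbol $N^{2k}(1+|\xi_x|^2)^{-k}(1-\eta_N)$ this is harmless since all $\xi_t$-derivatives vanish identically, so the verification reduces to the purely spatial estimate you give.
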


\begin{remark} \label{rem:compactembedding} 
	\begin{enumerate}[label=(\roman*)]
		\item
		If we write $X \hookrightarrow \hookrightarrow Y_1 + Y_2$, this means the following for sequences: If $x_j \weakto x$ weakly in $X$, then there is $x_j=y_j^1+y_j^2$ such that $y_j^1 \to y^1$ strongly in $Y_1$ and $y_j^2 \to y^2$ strongly in $Y_2$.
		\item 
		At this point we are not working with exponents $p$ and $q$, as for certain choices of $p$ and $q$ we would not have the compact embedding
		\[
		L_p(\IT_{d+1})\hookrightarrow \hookrightarrow (\WSob^{-1,0}_p + \WSob^{0,-2k}_q)(\IT_{d+1});
		\]
		indeed if $p$ is close to $1$ and $q$ close to infinity we cannot hope for such a statement.
       
		\item The first statement is actually quite clear as $(\WSob^{-1,0}_r+ \WSob^{0,-1}_r)(\IT_{d+1}) = \WSob^{-1}_r(\IT_{d+1})$; for completeness we give a short argument below. In contrast, the differentiability actually improves for one space in the second statement which is not completely obvious.
	\end{enumerate}
\end{remark}

Lemma~\ref{lem:compactembedding} is a consequence of the following construction, which provides us with a way to decompose a function into a temporal and a spatial component:

\begin{lemma}[Space-time decomposition]\label{lem:spacetimeConverter}
	Let $1<r<\infty$, $\alpha,\beta\in \IR$ and $\gamma>0$. 
	Pick $\phi\in C^\infty((0,\infty))$ so that $\phi=0$ on $(0,1/3)$ and $\phi=1$ on $(2/3,\infty)$. 
	Then the two Fourier multipliers 
	\begin{align*}
		\Phi^{\mathrm{t}}_\gamma v
		\coloneqq& 
		\sum_{\xi \in \Z^{d+1}\setminus\{0\}} \phi \!\left(
			\frac{(1+\abs{\xi_t}^2)^{1/2}}{(1+\abs{\xi_x}^2)^{\gamma/2}}
		\right) 
		\fourF{v}(\xi)
		\, e^{2\pi i\, (t,x) \cdot \xi},
		\\  \Phi^{\mathrm{x}}_\gamma v
		\coloneqq& 
		\sum_{\xi \in \Z^{d+1}\setminus\{0\}} 
		(1-\phi)\!\left(\frac{(1+\abs{\xi_t}^2)^{1/2}}{(1+\abs{\xi_x}^2)^{\gamma/2}}\right) \fourF{v}(\xi)\, e^{2\pi i\, (t,x) \cdot \xi},
		\quad v\in C^\infty(\IT_{d+1}),
	\end{align*}
	extend to bounded linear operators
	\begin{align*}
		\Phi^{\mathrm{t}}_\gamma
		&\colon 
		\WSob^{\alpha,\beta}_r(\IT_{d+1})
		\to \WSob^{\alpha-s,\beta+s\gamma}_r(\IT_{d+1}),
		\\  \Phi^{\mathrm{x}}_\gamma
		&\colon 
		\WSob^{\alpha,\beta}_r(\IT_{d+1})
		\to \WSob^{\alpha+s,\beta-s\gamma}_r(\IT_{d+1})
	\end{align*}
	for each $s\ge 0$.
\end{lemma}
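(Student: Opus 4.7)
The plan is to invoke Lemma~\ref{lem:FM}\ref{FM:1} after rewriting each of $\Phi^{\mathrm{t}}_\gamma$ and $\Phi^{\mathrm{x}}_\gamma$ as a single $L_r$-bounded Fourier multiplier. Unraveling the definition of the norm in \eqref{eq:soboViaFourier}, the claimed boundedness $\Phi^{\mathrm{t}}_\gamma\colon \WSob^{\alpha,\beta}_r\to \WSob^{\alpha-s,\beta+s\gamma}_r$ reduces to the $L_r$-boundedness of the multiplier
\[
    m_{\mathrm{t}}(\xi)
    = (1+\vert\xi_t\vert^2)^{-s/2}\,(1+\vert\xi_x\vert^2)^{s\gamma/2}\,\phi(R(\xi)),
    \qquad R(\xi) := \frac{(1+\vert\xi_t\vert^2)^{1/2}}{(1+\vert\xi_x\vert^2)^{\gamma/2}},
\]
and analogously for $\Phi^{\mathrm{x}}_\gamma$, with multiplier $m_{\mathrm{x}}$ obtained by flipping the signs of the weight exponents and replacing $\phi$ by $1-\phi$.

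The function $m_{\mathrm{t}}$ is smooth on all of $\IR^{d+1}$ since $R$ never vanishes. Its pointwise boundedness follows from the support condition: $R(\xi)\ge 1/3$ whenever $\phi(R(\xi))\ne 0$, so $(1+\vert\xi_x\vert^2)^{s\gamma/2}\le 3^{s}(1+\vert\xi_t\vert^2)^{s/2}$ on $\supp\phi(R)$, and hence $\Vert m_{\mathrm{t}}\Vert_{\infty}\le 3^{s}\Vert\phi\Vert_{\infty}$. The main point is to verify the Mihlin-type derivative estimates
\[
    \vert\partial^\nu m_{\mathrm{t}}(\xi)\vert \le C_\nu\, \vert\xi_t\vert^{-\nu_t}\vert\xi_x\vert^{-\vert\nu_x\vert}
\]
on $(\IR\setminus\{0\})\times(\IR^d\setminus\{0\})$, for every multi-index $\nu=(\nu_t,\nu_x)$. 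A direct induction, starting from $\partial_{\xi_t}R = \tfrac{\xi_t}{1+\vert\xi_t\vert^2}R$ and $\partial_{\xi_{x_j}}R = -\tfrac{\gamma\,\xi_{x_j}}{1+\vert\xi_x\vert^2}R$, yields
\[
    \vert\partial^\nu R(\xi)\vert \le C_\nu\, R(\xi)\,(1+\vert\xi_t\vert^2)^{-\nu_t/2}(1+\vert\xi_x\vert^2)^{-\vert\nu_x\vert/2}.
\]
Since the higher-order derivatives $\phi^{(j)}(R)$ with $j\ge 1$ are supported on $\{1/3\le R\le 2/3\}$, where $R\sim 1$ and therefore $\vert\xi_t\vert^{2}\sim (1+\vert\xi_x\vert^2)^\gamma$, the chain rule applied to $\phi\circ R$ together with the elementary weight estimates $\vert\partial_{\xi_t}^{\nu_t}(1+\vert\xi_t\vert^2)^{-s/2}\vert \lesssim (1+\vert\xi_t\vert^2)^{-s/2-\nu_t/2}$ and its analogue in $\xi_x$, combined via Leibniz' rule, produces the desired bound. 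For $\vert\xi_t\vert,\vert\xi_x\vert\le 1$ the estimate is vacuous, as $m_{\mathrm{t}}$ is smooth and the right-hand side is at least $1$.

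The argument for $\Phi^{\mathrm{x}}_\gamma$ is mirror-symmetric: on $\supp(1-\phi(R))$ we have $R\le 2/3$, so $(1+\vert\xi_t\vert^2)^{s/2}\le (2/3)^s(1+\vert\xi_x\vert^2)^{s\gamma/2}$, yielding the analogous bounds for $m_{\mathrm{x}}$. The main obstacle is the careful bookkeeping of derivative estimates in the transition region $\{R\sim 1\}$, where the cutoff contributes through the chain rule; once the derivative bound on $R$ is established, everything propagates to $m_{\mathrm{t}}$ (and $m_{\mathrm{x}}$) via Leibniz, so the remainder of the proof is routine.
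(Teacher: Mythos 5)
Your proof is correct and follows essentially the same approach as the paper's: both reduce the claimed mapping property via the Fourier characterisation \eqref{eq:soboViaFourier} to $L_r$-boundedness of a single multiplier, verified through the Mihlin-type criterion of Lemma~\ref{lem:FM}\ref{FM:1}, with the decisive input being the two support implications $R\ge\tfrac13\Rightarrow(1+\abs{\xi_x}^2)^{\gamma/2}\lesssim(1+\abs{\xi_t}^2)^{1/2}$ and $R\le\tfrac23\Rightarrow(1+\abs{\xi_t}^2)^{1/2}\lesssim(1+\abs{\xi_x}^2)^{\gamma/2}$. The paper leaves the derivative bookkeeping as a ``direct computation''; you have simply written it out.
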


\begin{proof}
	This can be shown from a direct computation via Lemma~\ref{lem:compactembedding} \ref{FM:1} and \eqref{eq:soboViaFourier}, where we crucially use that $(1+\abs{\xi_x}^2)^{\gamma/2}\le C\, (1+\abs{\xi_t}^2)^{1/2}$ whenever $\phi\neq 0$ and $(1+\abs{\xi_t}^2)^{1/(2\gamma)}\le C'\, (1+\abs{\xi_x}^2)^{1/2}$ whenever $1-\phi\neq 0$.
\end{proof}

\begin{remark}
	The decomposition constructed in the previous lemma is fully scalar (and symmetric in $\xi$), i.e. it preserves any differential constraint as well as the fact that a function $v$ is real valued. 
	In particular, if $v \in L_r(\IT_{d+1};\IR^m)$ obeys some linear differential constraint $\mathbb{A} v=0$, then $\Phi_\gamma^\mathrm{t} v$ and $\Phi_\gamma^\mathrm{x} v$ constructed component-wise, also satisfy $\mathbb{A}(\Phi_\gamma^\mathrm{t} v) = \mathbb{A}(\Phi_\gamma^\mathrm{x} v) =0$.
\end{remark}

\begin{proof}[Proof of Lemma~\ref{lem:compactembedding}]
	We use $\Phi^{\mathrm{t}}_\gamma$ and $\Phi^{\mathrm{x}}_\gamma $ from Lemma~\ref{lem:spacetimeConverter} with $\gamma = 2k$. 
	The claims are a consequence of the compact embedding in Lemma~\ref{lem:FM} \ref{FM:2} followed by the mapping $v\mapsto (\fourF v(0) + \Phi^{\mathrm{t}}_{2k} v) +\Phi^{\mathrm{x}}_{2k} v$:    
	\begin{enumerate}
		\item [\ref{cptemb:1}]
		$
		\leb r(\IT_{d+1})
		\hookrightarrow \hookrightarrow 
		\WSob^{\frac{-2k}{2k+1},\frac{-2k}{2k+1}}_r(\IT_{d+1})
		\hookrightarrow
		(\WSob^{-1,0}_r+\WSob^{0,-2k}_r)(\IT_{d+1});
		$
		\item [\ref{cptemb:2}]
		$
		\WSob^{0,-2k+1}_r(\IT_{d+1})
		\hookrightarrow \hookrightarrow
		\WSob^{\frac{-1}{2k+1},-2k+\frac{2k}{2k+1}}_r(\IT_{d+1})
		\hookrightarrow
		(\WSob^{-1,0}_r+\WSob^{0,-2k}_r)(\IT_{d+1}).
		$
	\end{enumerate}
\end{proof}


\subsection{Doubly anisotropic spaces}\label{sec:2_Spaces:pblmSpaces}
We now come back to the equation $$\partial_t \epsilon - \Qd^{\ast} \Qd \sigma=0.$$
While it is easy to formulate this equation in the space of distributions, $\mathcal{D}'(\mathcal T\times \Omega;\IR^m)$, in the case $ p\neq q $ there is no immediate natural Sobolev space $ \sobW{\alpha,\beta}{r} $ for this equation. 
However, if we evaluate the distribution $\partial_t \epsilon - \Qd^{\ast} \Qd \sigma$ for $\epsilon \in L_p$ and $\sigma \in L_q$ with a test function $\psi$, the distribution extends continuously to all $\psi$ with
\begin{equation*}
	\dell_t \psi\in \leb {p'}(\mathcal T\times \Omega;\IR^m),\quad \dell_x^\nu\psi\in \leb {q'}(\mathcal T\times \Omega;\IR^m),\quad \nu\in \IN^d,\,\abs{\nu}= 2k,
\end{equation*}
where $ \tfrac 1p +\tfrac 1{p'}=1, $  and  $\tfrac{1}{q}+ \tfrac 1{q'}=1 $.
This motivates us to define a Banachspace of test functions $\VSob^{1,2k}_{p',q'}(\mathcal T\times\Omega;\IR^m)$ as the closure of $C_c^\infty(\mathcal T\times\Omega;\IR^m)$ with respect to the norm
\begin{equation*}
	\norm{\psi}{\VSob^{1,2k}_{p',q'}(\mathcal T\times\Omega)}
	\coloneqq \norm{\dell_t\psi}{\leb{p'}(\mathcal T\times\Omega)}
			+	\norm{\nabla^{2k}_x\psi}{\leb {q'}(\mathcal T\times\Omega)}
			+	\abs{ \int_{\mathcal T \times \Omega} \psi \dx[(t,x)] }.
\end{equation*}
The last summand is only necessary on the torus, since on $(0,T) \times \Omega$ it can be controlled via Poincar\'e's inequality.

The equation $\dell_t\epsilon-\Qd^\ast \Qd\sigma=0$ can then be posed in the dual space, 
\begin{equation*}
	\WSob^{-1,-2k}_{p,q}(\mathcal T\times\Omega;\IR^m) 
	\coloneqq
	\left(\VSob^{1,2k}_{p',q'}(\mathcal T\times\Omega;\IR^m)\right)',
\end{equation*}
equipped with the natural norm.

Hence, our parabolic operator $\A$ is bounded and linear as a map
\begin{equation*}
    \A\colon 
    (\leb p\times \leb q)(\mathcal T\times\Omega;\IR^m\times\IR^m)
    \longto (\WSob^{-1,-2k}_{p,q}\times \sobW{0,-k'}{p})(\mathcal T\times\Omega;\IR^m\times\IR^n).
\end{equation*}

To compare the scale $ \WSob^{-1,-2k}_{p,q} $ with the spaces $ \WSob^{-1,0}_r+\WSob^{0,-2k}_r $ from the previous sections, we provide a characterisation in the following lemma.
Towards this, for $1<p,q<\infty$, we equip the subspace 
\begin{equation} \label{eq:summand}
	\IR^m + \dell_t \leb p(\mathcal T\times\Omega;\IR^m) + (\nabla_x^{2k})^\ast \leb q(\mathcal T\times\Omega;(\IR^d)^{2k}\otimes\IR^m)
\end{equation}
of $ \mathcal D'(\mathcal T\times\Omega;\IR^m) $ with the norm
\begin{equation*}
	\norm{\chi}{\IR^m+\dell_t \leb p + (\nabla_x^{2k})^\ast \leb q}
	\coloneqq
		\inf\left\{\abs{\chi_0}+\norm{\epsilon}{\leb p}+\norm{\sigma}{\leb q}:\chi =\chi_0 + \dell_t\epsilon + (\nabla_x^{2k})^\ast\sigma\right\}.
\end{equation*}
Note again that on a domain $ (0,T)\times\Omega $ the summand $ \IR^m $ in \eqref{eq:summand} gets absorbed by the other spaces.

\begin{lemma}\label{lem:identifyEquationSpace}
	Let $1<p,q<\infty$.
	\begin{enumerate}[label=(\roman*)]
		\item \label{it:identifyEquationSpace:pqCase}
		We have an isomorphism 
		\begin{equation*}
			\WSob^{-1,-2k}_{p,q}(\mathcal T\times\Omega;\IR^m) 
			\simeq \IR^m+(\dell_t \leb p + (\nabla_x^{2k})^\ast \leb q)(\mathcal T\times\Omega;\IR^m).
		\end{equation*}
		\item\label{it:identifyEquationSpace:rCase}
		 If $\mathcal{T}\times\Omega=\IT_{d+1}$ and $p=q=r$ we have 
		\begin{equation*}
			\WSob^{-1,-2k}_{r,r} (\IT_{d+1};\IR^m)
			\simeq 
			(\sobW{-1,0}{r}+ \sobW{0,-2k}r)(\IT_{d+1};\IR^m),
		\end{equation*}
	\end{enumerate}
\end{lemma}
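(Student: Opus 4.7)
The plan is to prove both claims by duality, reducing everything to an identification of the relevant predual test-function spaces.

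For part \ref{it:identifyEquationSpace:pqCase}, I would observe that the defining norm of $\VSob^{1,2k}_{p',q'}(\mathcal T\times\Omega;\IR^m)$ is designed exactly so that the map
\[
T\colon \psi\longmapsto \bigl(\dell_t\psi,\,\nabla_x^{2k}\psi,\,\textstyle\int_{\mathcal T\times\Omega}\psi\dx[(t,x)]\bigr)
\]
is an isometric embedding into the product Banach space
$
Y\coloneqq \leb{p'}\oplus \leb{q'}\oplus\IR^m,
$
whose dual is $Y'=\leb p\oplus \leb q\oplus\IR^m$. By Hahn--Banach, every $\chi\in \WSob^{-1,-2k}_{p,q} = (\VSob^{1,2k}_{p',q'})'$ is represented (with quotient norm) by some triple $(\epsilon,\sigma,\chi_0)\in Y'$ via
\[
\langle \chi,\psi\rangle = \int \epsilon\mdot\dell_t\psi\dx[(t,x)] + \int\sigma\mdot\nabla_x^{2k}\psi\dx[(t,x)] + \chi_0\cdot\!\int\psi\dx[(t,x)].
\]
In distributions, $\chi = \chi_0 - \dell_t\epsilon + (\nabla_x^{2k})^\ast\sigma$, which, after absorbing the sign, is the claimed decomposition, and the quotient norm is exactly the infimum-sum norm on the right-hand side. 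The reverse inequality is immediate since each summand defines a bounded functional on $\VSob^{1,2k}_{p',q'}$ with the expected norm.

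For part \ref{it:identifyEquationSpace:rCase}, I would reduce to the identification of test spaces on the torus,
\[
\VSob^{1,2k}_{r',r'}(\IT_{d+1};\IR^m) = (\VSob^{1,0}_{r'}\cap\VSob^{0,2k}_{r'})(\IT_{d+1};\IR^m),
\]
with equivalent norms. Taking duals and combining with the canonical duality $(\VSob^{1,0}_{r'}\cap\VSob^{0,2k}_{r'})'\simeq \sobW{-1,0}{r}+\sobW{0,-2k}{r}$ recorded before Lemma~\ref{lem:FM} then yields the claim. One inclusion and bound is trivial from $|\!\int\psi|\le C\,\|\psi\|_{\leb{r'}}$. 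The non-trivial direction amounts to the Poincar\'e-type inequality
\[
\|\psi\|_{\leb{r'}}\le C\Bigl(\bigl|\textstyle\int\psi\dx[(t,x)]\bigr| + \|\dell_t\psi\|_{\leb{r'}} + \|\nabla_x^{2k}\psi\|_{\leb{r'}}\Bigr).
\]
To prove it, subtract the mean $\fourF\psi(0)$ and apply Lemma~\ref{lem:spacetimeConverter} with $\gamma=2k$, splitting the mean-zero remainder as $\widetilde\psi = \Phi^{\mathrm t}_{2k}\widetilde\psi + \Phi^{\mathrm x}_{2k}\widetilde\psi$. On the support of $\Phi^{\mathrm t}_{2k}$ the temporal frequency dominates, $|\xi_t|\gtrsim (1+|\xi_x|^2)^k\ge 1$, so the multiplier $(2\pi i\,\xi_t)^{-1}\phi(\cdot)$ is smooth off the coordinate planes and satisfies the anisotropic symbol bounds of Lemma~\ref{lem:FM}\ref{FM:1}, yielding $\|\Phi^{\mathrm t}_{2k}\widetilde\psi\|_{\leb{r'}}\lesssim \|\dell_t\psi\|_{\leb{r'}}$. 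Analogously, the Moore--Penrose inverse of $\nabla_x^{2k}$ on the support of $\Phi^{\mathrm x}_{2k}$ (where $(1+|\xi_x|^2)^k\gtrsim |\xi_t|$) gives $\|\Phi^{\mathrm x}_{2k}\widetilde\psi\|_{\leb{r'}}\lesssim \|\nabla_x^{2k}\psi\|_{\leb{r'}}$.

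The main obstacle is thus the verification of the anisotropic symbol estimates for the two inverted multipliers, specifically the estimates on derivatives of the cutoff $\phi(\cdot)$. The key observation is that on the support of $\phi'$ the two anisotropic frequency scales are comparable, $|\xi_t|\sim (1+|\xi_x|^2)^k$, so each derivative hitting the cutoff produces a factor that can be distributed between $|\xi_t|^{-1}$ and $|\xi_x|^{-1}$ without violating the bound in Lemma~\ref{lem:FM}\ref{FM:1}; the inverses $\xi_t^{-1}$ and the component-wise $|\xi_x|^{-2k}$ then become legitimate $\leb{r'}(\IT_{d+1})$ multipliers, closing the argument.
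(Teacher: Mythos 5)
Your proof of part \ref{it:identifyEquationSpace:pqCase} is essentially the paper's: you embed the predual $\VSob^{1,2k}_{p',q'}$ isometrically into $\leb{p'}\oplus\leb{q'}\oplus\IR^m$ (the paper's $F$) and identify $\WSob^{-1,-2k}_{p,q}$ with the quotient of the dual product by $\ker F^\ast$; Hahn--Banach is just a rephrasing of passing to the quotient.

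For part \ref{it:identifyEquationSpace:rCase} you take a genuinely different route. The paper constructs an explicit right inverse on the dual side, namely the map
$G\colon \chi \mapsto \bigl(\fourF\chi(0),\ \dell_t^{-1}\Phi_{2k}^\mathrm{t}\chi,\ ((\nabla_x^{2k})^\ast)^{-1}\Phi_{2k}^\mathrm{x}\chi\bigr)$, and checks $F^\ast\circ G=\mathrm{id}$ on smooth functions by density. You instead prove the predual identification $\VSob^{1,2k}_{r',r'}(\IT_{d+1})\simeq(\VSob^{1,0}_{r'}\cap\VSob^{0,2k}_{r'})(\IT_{d+1})$ with equivalent norms and then invoke the Bergh--L{\"o}fstr{\"o}m duality $(\VSob^{1,0}_{r'}\cap\VSob^{0,2k}_{r'})'\simeq \sobW{-1,0}{r}+\sobW{0,-2k}{r}$ that the paper records in Section~\ref{sec:2_Spaces:3_functionspaces}. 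Both arguments hinge on the same anisotropic splitting $\Phi^{\mathrm t}_{2k},\Phi^{\mathrm x}_{2k}$ and on the symbol estimates of Lemma~\ref{lem:FM}\ref{FM:1}; the difference is where the multiplier work is done (test space vs.\ distribution space). Your route is arguably more economical since it reuses a recorded duality rather than building $G$ from scratch, while the paper's version avoids a separate Poincar\'e argument and makes the inverse completely explicit. One routine point you leave implicit: the intersection norm on $\VSob^{1,0}_{r'}\cap\VSob^{0,2k}_{r'}$ also contains the intermediate spatial derivatives $\|\nabla_x^{\beta'}\psi\|_{\leb{r'}}$ for $1\le\beta'<2k$, not just $\|\psi\|_{\leb{r'}}$ and $\|\nabla_x^{2k}\psi\|_{\leb{r'}}$; these are controlled by the same $\Phi^{\mathrm t}_{2k},\Phi^{\mathrm x}_{2k}$-splitting (or by interpolation on $\IT_{d+1}$ once $\|\psi\|_{\leb{r'}}$ is bounded), but should be stated explicitly to complete the norm equivalence.
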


\begin{proof}
	\begin{proofsteps}
		\item [\ref{it:identifyEquationSpace:pqCase}]
		The space $ \VSob^{1,2k}_{p',q'}(\mathcal T\times\Omega;\IR^m)$ embeds isometrically into a closed subspace of $\mathcal L_{p',q'}\coloneqq \IR^m\times(\leb {p'}\times \leb {q'})(\mathcal T\times\Omega;\IR^m\times(\IR^d)^{\otimes2k}\otimes\IR^m)$ via the map
        \begin{equation*}
        	F\colon \psi\longmapsto \left (\int_{\mathcal T\times\Omega}\psi\dx[(t,x)],\, -\dell_t\psi,\, \nabla_x^{2k}\psi\right ).
        \end{equation*}
		The adjoint map $F^\ast\colon (\chi_0,\epsilon,\sigma)\mapsto \chi_0 + \dell_t\epsilon+(\nabla_x^{2k})^\ast\sigma $ identifies $ \WSob^{-1,-2k}_{p,q}(\mathcal T\times\Omega) $ as the quotient $\mathcal L_{p,q}/\ker F^\ast$. 
		Upon extracting the definitions, this is precisely the desired isomorphism.
		\item [\ref{it:identifyEquationSpace:rCase}] 
        On $\IT_{d+1}$ with $p=q=r$ Fourier theory is available.
		With the mapping properties of $\Phi_{2k}^\mathrm t , \Phi_{2k}^\mathrm x$ in Lemma~\ref{lem:spacetimeConverter}, the map
		\begin{align*}
			G\colon\sobW{-1,0}{r}(\IT_{d+1}) + \sobW{0,-2k}r (\IT_{d+1})&\longto \mathcal L_{r,r}/\ker F^\ast,
		\\	\chi&\longmapsto \Big (\fourF\psi(0),\ \dell_t^{-1}\Phi_{2k}^\mathrm{t}\chi,\ ((\nabla_x^{2k})^\ast)^{-1} \Phi_{2k}^\mathrm{x}\chi,\Big)
		\end{align*}
		is continuous. 
		However, for all $\chi \in C^\infty(\IT_{d+1};\IR^m)$ we have $F^\ast (G(\chi)) = \chi$, so that a density argument yields the required isomorphism.
	\end{proofsteps}
\end{proof}

\begin{remark}
	We emphasise that in general the space $\WSob^{-1,0}_p + \WSob^{0,-2k}_q$ is \emph{not} the same space as $\WSob^{-1,-2k}_{p,q}$.
	Morally speaking, in $\WSob^{-1,0}_p + \WSob^{0,-2k}_q$ we can move purely spatial or purely temporal singularities from one summand to the other, while in $\WSob^{-1,-2k}_{p,q}$ this is not possible.
	Indeed, consider the case where $ p\gg q $ so that we can find $ \theta\in(0,d)\setminus\IN $ with
	\begin{equation*}
		\frac{d+1}{p}\le \theta< \frac{d+1}{q}-1.
	\end{equation*}
	Let $\eta\in C_c^\infty(-\tfrac{1}{2},\tfrac 12)$ with $\eta=1$ on $B_{\frac 14}(0)$, and consider the periodic extension of 
	\begin{equation*}
		v_\theta(t,x)\coloneqq 
		\dell_t\left (\eta(t)\, \abs{t}^{-\theta}\right ) 
		= \abs{t}^{-\theta-1} 
		\left (-\theta\,\eta(t)\tfrac{t}{\abs{t}} + \eta'(t)\, \abs{t}\right )
		,\quad (t,x)\in(-\tfrac{1}{2},\tfrac 12)^{d+1}.
	\end{equation*}
	With  Lemma~\ref{lem:identifyEquationSpace}, $ v_\theta $ is \emph{not} in $ \WSob^{-1,-2}_{p,q}(\IT_{d+1}) $, while we do have $ v_\theta\in \WSob^{-1,0}_p + \WSob^{0,-2k}_q  $, since
	\begin{equation*}
		\norm{v_\theta}{\WSob^{-1,0}_p + \WSob^{0,-2}_q}\le \norm{v_\theta}{\WSob^{0,-2}_q} = \norm{v_\theta}{\leb q} < \infty.
	\end{equation*}
\end{remark}
\section{$\A$-quasiconvexity for non-homogeneous operators}\label{sec:3_propA}
In this section, we first discuss some fundamental properties of the differential operator $\A$ as specified in \eqref{def:A}.
We then use these tools to investigate the notion of $\A$-quasiconvexity for non-homogeneous operators. 
Although most results of this section have an established analogue in the homogeneous case, our non-homogeneous setting introduces some new and non-linear technical difficulties.


\subsection{The Fourier symbol of $\A$}
First of all, recall that for $(\hat\epsilon,\hat\sigma) \in \C^m \times \C^m$ the function given by a wave
\[
(\epsilon,\sigma) = (\hat{\epsilon},\hat{\sigma}) \cdot e^{2\pi i\, \xi \cdot (t,x)}
\]
is in $\ker \A$ if and only if $(\hat{\epsilon},\hat{\sigma}) \in \ker \A [\xi]$. We therefore study $\ker \A[\xi]$ further in the following lemma.
\begin{lemma}\label{lem:studyKernel}
	\begin{enumerate} [label=(\roman*),ref=(\roman*)]
		\item \label{it:studyKernel:scaling}
		If $(\epsilon,\sigma)\in C^\infty(\IT_d;\IR^m\times\IR^m)$ satisfies $\A(\epsilon,\sigma) =0$ then for any $\lambda\in \IN$ we have $\A(\epsilon_{\lambda},\sigma_{\lambda}) =0$, where 
		\begin{equation*}
			\epsilon_\lambda(t,x)
			\coloneqq 
			\epsilon(\lambda^{2k}t,\lambda x)
			\quad\text{and}\quad
			\sigma_\lambda(t,x)
			\coloneqq      
			\sigma(\lambda^{2k}t,\lambda x).
		\end{equation*}
		\item \label{it:studyKernel:decomposition}
		We define the characteristic cone of $\A$ as 
		\[
		\Lambda_{\A} =  \bigcup_{\xi \in \IR^{d+1} \setminus \{0\}} \ker \A[\xi]\subset (\IC^m\times\IC^m).
		\]
		It can be decomposed along frequencies $\{\xi_t=0,\xi_x\neq 0\}$, $\{\xi_t\neq0,\xi_x=0\}$ and $\{\xi_t\neq0,\xi_x\neq 0\}$ into  
		\begin{equation*}
			\Lambda_\A = \Lambda_1\cup \Lambda_2 \cup \Lambda_3,
		\end{equation*}    
		where
		\begin{align*}
			\Lambda_1 
			&=   \bigcup_{\xi_x\in\IR^d\setminus\{0\}}
			\{
			(\hat\epsilon,\hat\sigma)
			:
			\Qd[\xi_x]\hat\sigma =0,
			\   \Pd^\ast[\xi_x]\hat\epsilon=0  
			\},
			&   \Lambda_2
			&=  \{0\}\times \IC^m
			\txtand
			\\  \Lambda_3
			&=   \bigcup_{\substack{
					\xi_t\in\IR,\,\xi_x\in\IR^d
					\\  \xi_t\neq 0,\,\xi_x\neq 0
			}}
			\{
			(\hat\epsilon,i\hat\sigma)
			:   
			2\pi\,
			\xi_t \hat\epsilon
			= \Qd^\ast\Qd[\xi_x]\hat\sigma
			\}.
		\end{align*}
		\item \label{it:studyKernel:constantRank}
		The operator $\A$ satisfies the constant-rank property, i.e. $\dim \ker \A[\xi]$ is constant along all $\xi \in \IR^{d+1} \setminus \{0\}$.
	\end{enumerate}    
\end{lemma}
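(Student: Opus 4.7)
The three parts can be handled largely independently: \ref{it:studyKernel:scaling} follows from a direct scaling computation, while \ref{it:studyKernel:decomposition} and \ref{it:studyKernel:constantRank} reduce to a case analysis on the frequency $\xi=(\xi_t,\xi_x)$ for the Fourier symbol $\A[\xi]$ of Definition~\ref{def:A}. For \ref{it:studyKernel:scaling} the parabolic scaling $(t,x)\mapsto(\lambda^{2k}t,\lambda x)$ makes $\dell_t$ and $\Qd^\ast\Qd$ both pick up an identical factor $\lambda^{2k}$, so the first component of $\A(\epsilon_\lambda,\sigma_\lambda)$ equals $\lambda^{2k}\,(\dell_t\epsilon-\Qd^\ast\Qd\sigma)(\lambda^{2k}t,\lambda x)$; similarly the second component scales as $\lambda^{k'}(\Pd^\ast\epsilon)(\lambda^{2k}t,\lambda x)$. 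Both vanish because $\A(\epsilon,\sigma)=0$, and the restriction $\lambda\in\IN$ is used only so that the rescaled function remains $\IT_{d+1}$-periodic.

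For \ref{it:studyKernel:decomposition} I start from the pointwise description that $(\hat\epsilon,\hat\sigma)\in\ker\A[\xi]$ iff $2\pi i\,\xi_t \hat\epsilon = \Qd^\ast[\xi_x]\Qd[\xi_x]\hat\sigma$ and $\Pd^\ast[\xi_x]\hat\epsilon=0$, and split according to the three frequency regions. If $\xi_x=0$ (so $\xi_t\neq 0$), the conditions degenerate to $\hat\epsilon=0$ with $\hat\sigma$ arbitrary, giving $\Lambda_2$. If $\xi_t=0$ and $\xi_x\neq 0$, the first condition becomes $\Qd^\ast[\xi_x]\Qd[\xi_x]\hat\sigma=0$, which is equivalent to $\Qd[\xi_x]\hat\sigma=0$ via $\langle\Qd^\ast\Qd[\xi_x]\hat\sigma,\hat\sigma\rangle=\abs{\Qd[\xi_x]\hat\sigma}^2$, together with $\Pd^\ast[\xi_x]\hat\epsilon=0$, yielding $\Lambda_1$. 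For $\xi_t,\xi_x\neq 0$, $\hat\epsilon$ is uniquely determined by $\hat\sigma$ via $\hat\epsilon=\tfrac{1}{2\pi i\xi_t}\Qd^\ast[\xi_x]\Qd[\xi_x]\hat\sigma$, and relabelling $\hat\sigma\mapsto i\hat\sigma$ matches the formula for $\Lambda_3$; crucially, the constraint $\Pd^\ast[\xi_x]\hat\epsilon=0$ is automatic here thanks to $\Pd^\ast\Qd^\ast=(\Qd\Pd)^\ast=0$, a consequence of $\Qd$ being an annihilator of $\Pd$.

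For \ref{it:studyKernel:constantRank} I count dimensions using the constant-rank data: $\dim\ker\Qd[\xi_x]=r$ for $\xi_x\neq 0$, and $\dim\ker\Pd^\ast[\xi_x]=m-r$ since $\rank\Pd^\ast[\xi_x]=\rank\Pd[\xi_x]=\dim\Image\Pd[\xi_x]=\dim\ker\Qd[\xi_x]=r$. At a frequency contributing to $\Lambda_1$ the kernel factors as $\ker\Pd^\ast[\xi_x]\times\ker\Qd[\xi_x]$ of total dimension $(m-r)+r=m$; at a frequency contributing to $\Lambda_2$ the kernel is $\{0\}\times\IC^m$, again of dimension $m$; at a frequency contributing to $\Lambda_3$ the free parameter is $\hat\sigma\in\IC^m$ with $\hat\epsilon$ uniquely determined, once more giving dimension $m$. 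Hence $\dim\ker\A[\xi]=m$ for every $\xi\in\IR^{d+1}\setminus\{0\}$.

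The only genuinely subtle step I anticipate is verifying that the compatibility constraint $\Pd^\ast[\xi_x]\hat\epsilon=0$ is automatic in the $\Lambda_3$ region, which rests on the potential/annihilator duality recalled in Section~\ref{sec:2_Spaces:1_consrRankOps}; the remainder is bookkeeping on the Fourier side.
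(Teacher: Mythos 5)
Your proof is correct and follows essentially the same route as the paper's: the scaling claim is a direct order count, and parts (ii) and (iii) proceed by the same three-way case analysis on $(\xi_t,\xi_x)$, with the same key observation that $\Pd^\ast[\xi_x]\hat\epsilon=0$ is automatic in the mixed-frequency region from $\Pd^\ast\Qd^\ast=(\Qd\Pd)^\ast=0$. The only cosmetic difference is in (iii): you count $\dim\ker\Pd^\ast[\xi_x]=m-r$ via rank equalities, while the paper invokes $(\ker\Qd[\xi_x])^\perp=\ker\Pd^\ast[\xi_x]$ — these are the same fact.
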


\begin{remark}\label{rem:realCone}
	Although the sets $\Lambda_1,\Lambda_2$ and $\Lambda_3$ are complex valued, we will only use the real-real and real-imaginary subsets 
	\begin{equation*}
		\Lambda_j^{\IR} \coloneqq \Lambda_j\cap (\IR^m\times\IR^m),
		\quad j\in\{1,2\}\quad\txtand\quad \Lambda_3^{\IR} \coloneqq \Lambda_3\cap (\IR^m\times i\IR^m).
	\end{equation*}
\end{remark}

\begin{proof}[Proof of Lemma~\ref{lem:studyKernel}]
	\begin{enumerate}[leftmargin=0pt,itemindent=1cm]
		\item[\ref{it:studyKernel:scaling}] is clear by considering the order of the involved differential equation.
		\item[\ref{it:studyKernel:decomposition}]
		First consider $\xi\in\IR^{d+1}$ with $\xi_t=0$ and $\xi_x\neq 0$. 
		Then 
		\begin{equation*}
			\ker\A[\xi] 
			= 
			\{
			(\hat\epsilon,\hat\sigma)
			:   
			\Qd^\ast \Qd[\xi_x]\hat\sigma =0,\, \Pd^\ast[\xi_x]\hat\epsilon=0
			\}
			= 
			\{
			(\hat\epsilon,\hat\sigma)
			:   
			\Qd[\xi_x]\hat\sigma =0,\, \Pd^\ast[\xi_x]\hat\epsilon=0
			\}.
		\end{equation*} 
		Taking the union over $\xi\in\{\xi_t=0,\xi_x\neq0\}$ gives $\Lambda_1$.
		
		Second, let $\xi\in\IR^{d+1}$ with $\xi_t\neq 0$ and $\xi_x=0$ and consider $(\hat\epsilon,\hat\sigma)\in \ker\A[\xi] $.
		Then $\hat\sigma$ is arbitrary, whilst $2\pi\, \xi_t \hat\epsilon=0 $ implies $\hat \epsilon=0$. 
		Thus, $(\hat\epsilon,\hat\sigma)\in \Lambda_2$. 
		Conversely, every $(0,\hat\sigma) \in\Lambda_2$ is contained in $\ker\A[(\xi_t,0)]$ for any $\xi_t\neq 0$.
		
		Finally, if both $\xi_t,\xi_x\neq 0$, the first equation 
		\begin{equation*}
			2\pi i\, \xi_t \hat\epsilon = \Qd^\ast\Qd[\xi_x] \hat\sigma
		\end{equation*}
		already entails $\Pd^\ast[\xi_x] \hat\epsilon = 0$.
		Hence,
		\begin{equation*}
			\ker\A[\xi]
			=   
			\{
			(\hat\epsilon,i\hat\sigma)
			:   
			2\pi \,\xi_t\hat\epsilon  =\Qd^\ast \Qd[\xi_x]\hat\sigma
			\}, 
		\end{equation*}
		which gives $\Lambda_3$ when taking the union over all $\xi\in\{\xi_t\neq0,\xi_x\neq 0\}$.
		
		\item[\ref{it:studyKernel:constantRank}]
		In the previous step we have computed $\ker \A[\xi]$ for all $\xi\in\IR^{d+1}\setminus\{0\}$. 
		If $\xi_t=0,\xi_x\neq 0$, we recall that $(\ker\Qd[\xi_x])^\perp = \ker\Pd^\ast[\xi_x]$.
		This means 
		\begin{equation*}
			\dim \ker \A[\xi] = \dim \ker \Qd[\xi_x] + \dim \ker \Pd^\ast[\xi_x] = m.
		\end{equation*}
		If $\xi_t=0,\xi_x\neq 0$, obviously $\dim \ker\A[\xi] = m$.
		Lastly, if $\xi_t\neq0,\xi_x\neq 0$, we may write $\ker\A[\xi]$ as the graph of the function $\hat\sigma \mapsto \tfrac{1}{2\pi i\xi_t}\Qd^\ast\Qd[\xi_x]\hat \sigma$, which also gives
		$\dim\ker\A[\xi]=m$.
	\end{enumerate}    
\end{proof}


\subsection{Projections onto the nullspace of $ \A$}
We remind the reader of the Fourier theory for homogeneous differential constraints (cf. \cite[Lemma 2.14]{FM} and also \cite{GR}), which we adapt in Lemma~\ref{lem:AcorrectionOperator} below. 
In particular, there is a bounded and linear projection operator 
\begin{equation}\label{eq:kernelProjectionIsotropic}
	\Pi_\IA\colon \leb p(\IT_d;\IR^M)\longto \leb p(\IT_d;\IR^M)\cap \ker \IA
\end{equation}
for homogeneous constant-rank operators $\IA\colon C^\infty(\IT_d;\IR^M)\to C^\infty(\IT_d;\IR^N)$.

\begin{lemma}[Parabolic Projection] \label{lem:AcorrectionOperator}
	Let $p,q\in(1,\infty)$. 
	There exists a continuous \textbf{non-linear} projection map
	\begin{equation*}
		\widetilde \Pi_\A\colon(\leb p\times\leb q)(\IT_{d+1};\IR^m\times\IR^m)
		\longto (\leb p\times\leb q)(\IT_{d+1};\IR^m\times\IR^m),
	\end{equation*}
	such that 
	\begin{enumerate}[label=(\roman*)]
		\item \label{it:AcoorectionOperator:ALzero}
		$\A\circ \widetilde\Pi_\A=0$;
		\item \label{it:AcoorectionOperator:projection} 
		$\widetilde \Pi_\A\circ \widetilde \Pi_\A= \widetilde \Pi_\A $;
		\item \label{it:AcoorectionOperator:mean}
		for $w\in(\leb p\times \leb q)(\IT_{d+1};\IR^m\times\IR^m)$ it holds 
		\begin{equation*}
			\int_{\IT_{d+1}}\widetilde\Pi_\A w\dx[(t,x)] 
			= \int_{\IT_{d+1}} w\dx[(t,x)];
		\end{equation*}
		\item \label{it:AcoorectionOperator:estimate} 
		there is a constant $C_{p,q}>0$ such that
		\begin{equation*}
		\bigl \Vert w-\widetilde\Pi_\A w \bigr \Vert_{(\leb p\times \leb q)(\IT_{d+1})} 
			\le C_{p,q}\,\norm{\A w}{(\WSob^{-1,-2k}_{p,q}\times \WSob^{0,-k'}_p)(\IT_{d+1})}.
		\end{equation*}
	\end{enumerate}
\end{lemma}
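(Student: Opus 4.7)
The plan is to build $\widetilde \Pi_\A$ as a Fourier-multiplier-type operator on $\IT_{d+1}$ that maps each frequency $\hat w(\xi)$ into the explicit description of $\ker\A[\xi]$ provided by Lemma~\ref{lem:studyKernel}~\ref{it:studyKernel:decomposition}. The zero frequency $\xi=0$ is kept unchanged, which immediately yields \ref{it:AcoorectionOperator:mean} and reduces the construction to zero-mean functions.

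On the non-zero frequencies I would apply the space-time partition of unity $\Phi^\mathrm t_{2k}+\Phi^\mathrm x_{2k}=\mathrm{id}$ from Lemma~\ref{lem:spacetimeConverter} with $\gamma=2k$; both cutoffs are $L_r$-bounded for every $r\in(1,\infty)$ by Lemma~\ref{lem:FM}~\ref{FM:1}. On the temporal support, where $|\Qd^\ast\Qd[\xi_x]/(2\pi\xi_t)|\lesssim 1$, I would parameterize $\ker\A[\xi]$ through $\hat\sigma$ (the parameterization giving $\Lambda_3$ in Lemma~\ref{lem:studyKernel}) and set
\begin{equation*}
	(\hat\epsilon,\hat\sigma)\longmapsto \bigl((2\pi i\xi_t)^{-1}\Qd^\ast\Qd[\xi_x]\,\hat\sigma\,,\ \hat\sigma\bigr),
\end{equation*}
so that the only genuinely off-diagonal piece lives in the region where it has symbol bounded by $1$ and hence meets Lemma~\ref{lem:FM}~\ref{FM:1}. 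On the spatial support, where instead $|\xi_t|/|\xi_x|^{2k}\lesssim 1$, I would parameterize by $\hat\epsilon$, combining the spatial Helmholtz-type projections $\Pi_{\Pd^\ast}[\xi_x]$, $\Pi_\Qd[\xi_x]$ from \eqref{eq:kernelProjectionIsotropic} with the Moore--Penrose inverse \eqref{eq:moorePenrose:multiplierDef} to generate the matching $\hat\sigma$-component; again the relevant off-diagonal multiplier $(\Qd^\ast\Qd[\xi_x])^{-1}\,2\pi i\xi_t$ is controlled by $1$ on this support and is $L_r$-bounded via Remark~\ref{rem:spatialOpsAsMultiplier}. Properties~\ref{it:AcoorectionOperator:ALzero} and~\ref{it:AcoorectionOperator:projection} can then be checked frequency-by-frequency from the explicit matrix form. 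For the error estimate \ref{it:AcoorectionOperator:estimate}, I would rewrite $w-\widetilde \Pi_\A w$ in Fourier space as a bounded multiplier acting on $\A w$ and plug in the representation $\A w = c_0+\dell_t a+(\nabla_x^{2k})^\ast b$ with $a\in L_p$, $b\in L_q$ supplied by Lemma~\ref{lem:identifyEquationSpace}~\ref{it:identifyEquationSpace:pqCase}; the desired bound is then another application of Lemma~\ref{lem:FM}~\ref{FM:1}.

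The hard part, and the genuine source of the non-linearity in the statement, is the mismatch of integrabilities when $p\neq q$: the orthogonal projection onto $\ker\A[\xi]$ has two off-diagonal blocks that would simultaneously have to be bounded $L_q\to L_p$ \emph{and} $L_p\to L_q$ as Fourier multipliers on $\IT_{d+1}$, and only one of these two inclusions is available on the torus. The workaround above, projecting along the $\hat\epsilon$-axis in one regime and along the $\hat\sigma$-axis in the other, cancels the bad direction in each regime but breaks the symmetry between the two arguments, so that the two regime-wise linear projections, although pointwise idempotent, do not glue linearly to an idempotent operator. To recover $\widetilde\Pi_\A^2=\widetilde\Pi_\A$ globally I expect one has to add a gluing step in the overlap of $\mathrm{supp}\,\Phi^\mathrm t_{2k}$ and $\mathrm{supp}\,\Phi^\mathrm x_{2k}$---either a von~Neumann-type iteration $\widetilde\Pi_\A = \lim_{n\to\infty}\Pi^n$, or a data-dependent selection of the projection direction depending on the relative sizes of the $L_p$- and $L_q$-norms of the input---and it is exactly this gluing step that breaks linearity while preserving continuity.
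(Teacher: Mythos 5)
Your proposal takes a genuinely different route from the paper, and it has a gap that cannot be closed along the lines you sketch. You correctly identify the core difficulty: any Fourier-multiplier projection onto $\ker\A[\xi]$ necessarily has cross-coupling blocks sending $\hat\epsilon$-data to the $\hat\sigma$-slot and/or vice versa, and a bounded multiplier from $L_p$ to $L_q$ (or $L_q$ to $L_p$) on the torus is only available in one direction when $p\neq q$. But your workaround does not eliminate the bad cross-coupling; it merely ensures that each frequency regime has \emph{one} cross-coupling block rather than two. The two regimes then carry cross-couplings of opposite orientation: on the temporal support the new $\hat\epsilon$-component is produced from $\hat\sigma$ (needs $L_q\to L_p$), while on the spatial support the new $\hat\sigma$-component is produced from $\hat\epsilon$ (needs $L_p\to L_q$). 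Since precisely one of these two mapping directions is available, exactly one of your two regime maps is unbounded, independent of the sign of $p-q$. A von~Neumann iteration or a data-dependent gluing in the overlap can only be invoked to recover idempotency; it cannot repair boundedness of either regime map, which is broken before any gluing takes place. In other words, the non-linearity in the statement is \emph{not} a symptom of an idempotency problem at the seam between the two regimes.

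The paper's construction avoids cross-coupling multipliers entirely. It isolates the equation error $\chi=\partial_t\epsilon-\Qd^\ast\Qd\sigma$ and, instead of splitting $\chi$ by a fixed frequency cut-off, decomposes it as $\chi=\partial_t\epsilon_\chi+(\nabla_x^{2k})^\ast\sigma_\chi$ with $(\epsilon_\chi,\sigma_\chi)\in L_p\times L_q$ chosen as the \emph{unique minimiser} of $\bigl(\norm{\epsilon'}{\leb p}^2+\norm{\sigma'}{\leb q}^2\bigr)^{1/2}$ over all admissible decompositions; uniqueness and continuity come from uniform convexity of $\leb p\times\leb q$, and Lemma~\ref{lem:identifyEquationSpace}~\ref{it:identifyEquationSpace:pqCase} identifies this minimal norm with $\norm{\chi}{\WSob^{-1,-2k}_{p,q}}$. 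The corrected pair is then
\begin{equation*}
\widetilde\epsilon=\Pi_{\Pd^\ast}\epsilon-\Pi_{\Pd^\ast}\dell_t^{-1}\dell_t\epsilon_\chi,
\qquad
\widetilde\sigma=\sigma+\Pi_{\Pd^\ast}(\Qd^\ast\Qd)^{-1}(\nabla_x^{2k})^\ast\sigma_\chi,
\end{equation*}
so that every multiplier applied to $\epsilon$ or $\epsilon_\chi$ stays within $L_p$ and every multiplier applied to $\sigma$ or $\sigma_\chi$ stays within $L_q$; the $\epsilon\leftrightarrow\sigma$ coupling has been pushed into the selection of $(\epsilon_\chi,\sigma_\chi)$, which respects the $(L_p\times L_q)$-structure by construction. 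This minimiser map $\chi\mapsto(\epsilon_\chi,\sigma_\chi)$ is the genuine source of the non-linearity (it is the metric projection onto an affine subspace for a non-Hilbertian norm), not a seam between frequency regimes. If you want a linear construction, you must take $p=q=r$, and then indeed the space-time splitting $\Phi^{\mathrm t}_{2k},\Phi^{\mathrm x}_{2k}$ you invoke gives the linear projection of Corollary~\ref{lem:linearProjection}; your proposal essentially reproves that corollary but does not extend to $p\neq q$.
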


\begin{proof}
	Let $w=(\epsilon,\sigma)\in(\leb p\times \leb q)(\IT_{d+1};\IR^m\times\IR^m)$ and write $\chi\coloneqq \dell_t\epsilon-\Qd^\ast \Qd \sigma$.
	We first project $\epsilon$ onto $\ker \Pd^\ast$ and correct it by the \emph{temporal} part of $ \chi $. 
	Similarly, $\sigma$ will be adjusted to remove the \emph{spatial} component.
	
	To identify these objects, consider the variational problem
	\begin{equation*}
		\inf\left \{\morm{(\epsilon',\sigma')}{} : \chi = \dell_t\epsilon' + (\nabla^{2k})^\ast \sigma' \right \},
		\quad \text{where}\quad \morm{(\epsilon',\sigma')}{}\coloneqq \left (\norm{\epsilon'}{\leb p}^2 + \norm{\sigma'}{\leb q}^2\right )^{1/2}
	\end{equation*}
	for $\epsilon'\in \leb p(\IT_{d+1};\IR^m)$ and $ \sigma' \in \leb q(\IT_{d+1};(\IR^d)^{\otimes2k}\otimes\IR^m) $.
	Since the space $ (\leb p\times \leb q,\morm{\circ}{})$ is uniformly convex  (see \cite{saitoUniformConvexityPsdirect2003}), there exists a unique minimiser $ (\epsilon_\chi,\sigma_\chi) $ in the closed subspace $\{(\epsilon',\sigma'):\chi = \dell_t\epsilon' + (\nabla^{2k})^\ast \sigma'\}$.
	With Lemma~\ref{lem:identifyEquationSpace} we find
	\begin{equation*}
		\norm{\chi}{\WSob^{-1,-2k}_{p,q}(\IT_{d+1})}\sim \morm{(\epsilon_\chi,\sigma_\chi)}{}.
	\end{equation*}
	Note that the mapping $ \chi\mapsto (\epsilon_\chi,\sigma_\chi) $ is in general not linear,
	but on bounded subsets of the space $ \WSob^{-1,-2k}_{p,q}(\IT_{d+1}) $ uniform convexity of $( \leb p\times\leb q, \morm{\circ}{}) $ implies that it is uniformly continuous with respect to the strong topologies of $ \WSob^{-1,-2k}_{p,q}(\IT_{d+1}) $ and $ (\leb p\times\leb q)(\IT_{d+1}) $.
	We define
	\begin{equation*}
		\widetilde\epsilon
		\coloneqq \Pi_{\Pd^\ast} \epsilon 
		- \Pi_{\Pd^\ast}
		\, \dell_t^{-1}
		\, \dell_t
		\, \epsilon_\chi
		\quad\text{and}\quad 
		\widetilde\sigma
		\coloneqq \sigma 
		+  	\Pi_{\Pd^\ast}	
		\,	(\Qd^\ast\Qd)^{-1}
		\, 	(\nabla^{2k})^\ast 
		\, 	\sigma_\chi.
	\end{equation*}
	It now suffices to verify that
	\begin{equation*}
		\widetilde \Pi_\A\colon (\epsilon,\sigma)\longmapsto(\widetilde\epsilon,\widetilde\sigma)
	\end{equation*}
	satisfies \ref{it:AcoorectionOperator:ALzero}--\ref{it:AcoorectionOperator:estimate}:
	\begin{enumerate}[itemindent=1cm, leftmargin=0em]
		\item [\ref{it:AcoorectionOperator:ALzero}]
		Due to the algebraic properties of the Moore--Penrose inverse and using that $ \Pd $ is a potential for $ \Qd $, we have
		\begin{equation*}
			\Pi_{\Pd^\ast}\circ \Qd^\ast\Qd= \Qd^\ast\Qd
			,\quad
			\Qd^\ast\Qd\circ \Pi_{\Pd^\ast}	
			\circ	(\Qd^\ast\Qd)^{-1} \circ (\nabla^{2k})^\ast
            = \Pi_{\Pd^\ast}\circ (\nabla^{2k})^\ast.
		\end{equation*}
		Thus,
		\begin{equation*}
			\dell_t \widetilde\epsilon -\Qd^\ast\Qd \widetilde \sigma 
			= \Pi_{\Pd^\ast} \left (
			\dell_t \epsilon-\Qd^\ast\Qd\sigma 
			-(\dell_t \epsilon_\chi + (\nabla^{2k})^\ast \sigma_\chi )
			\right )=0.
		\end{equation*}
		With \eqref{eq:kernelProjectionIsotropic} we also get $ \Pd^\ast \widetilde\epsilon =0$.
	\item [\ref{it:AcoorectionOperator:projection}]
		Clearly $ (\epsilon_\chi,\sigma_\chi)=0 $ if $\chi=0$ and the claim follows by the above.
	\item [\ref{it:AcoorectionOperator:mean}]
		This is due to \eqref{eq:moorePenrose:multiplierDef} and \eqref{eq:kernelProjectionIsotropic}, since generally $ \fourF( \IA^{-1}\IA v )(0)=0$ for all $ v\in C^\infty(\IT_{d+1}) $.
	\item [\ref{it:AcoorectionOperator:estimate}] 
		Using the multiplier estimates in Lemma~\ref{lem:FM}, we have
		\begin{equation*}
			\norm{\epsilon-\widetilde \epsilon}{\leb p}
			\le \norm{\epsilon-\Pi_{\Pd^\ast}\epsilon}{\leb p} 
			+\norm{\Pi_{\Pd^\ast}\dell_t^{-1}\dell_t\epsilon_\chi}{\leb p}
			\le C\, \left ( \norm{\Pd^\ast \epsilon}{\WSob^{0,-k'}_p} + \norm{\epsilon_\chi}{\leb p}\right )
		\end{equation*}
		and
		\begin{equation*}
			\norm{\sigma-\widetilde\sigma}{\leb q}
			\le \norm{\Pi_{\Pd^\ast}(\Qd^\ast\Qd)^{-1}(\nabla^{2k})^\ast \sigma_\chi}{\leb q}
			\le C\, \norm{\sigma_\chi}{\leb q}.
		\end{equation*}
		As $ \Vert \partial_t \epsilon - \Qd^{\ast} \Qd \sigma \Vert_{W^{-1,-2k}_{p,q}} = \norm{\chi}{\WSob^{-1,-2k}_{p,q}}\sim \norm{\epsilon_\chi}{\leb p} + \norm{\sigma_\chi}{\leb q} $, the claim follows.
	\end{enumerate}
\end{proof}

In the simply anisotropic case $ p=q=r $, we obtain a \emph{linear} result reminiscent of \cite[Lemma 2.14]{FM}, using similar techniques as therein.

\begin{corollary}[Linear Parabolic Projection]\label{lem:linearProjection}
	If $1<r<\infty$ then there exists a \textbf{linear} projection map 
	\begin{equation*}
		\Pi_\A \colon \leb r(\IT_{d+1};\IR^m\times\IR^m) 
		\longto \leb r(\IT_{d+1};\IR^m\times\IR^m)
	\end{equation*}
	that also satisfies properties \ref{it:AcoorectionOperator:ALzero}--\ref{it:AcoorectionOperator:estimate} from Lemma~\ref{lem:AcorrectionOperator}.
\end{corollary}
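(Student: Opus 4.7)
The plan is to revisit the proof of Lemma~\ref{lem:AcorrectionOperator} and replace its only non-linear ingredient—the variational decomposition of $\chi \coloneqq \dell_t\epsilon - \Qd^\ast\Qd\sigma$ into $\dell_t\epsilon_\chi + (\nabla^{2k})^\ast\sigma_\chi$—by a linear, Fourier-multiplier-based alternative that becomes available precisely when $p=q=r$. Once the decomposition is linear, all remaining algebraic manipulations of the earlier proof go through unchanged, yielding an operator $\Pi_\A$ which enjoys properties \ref{it:AcoorectionOperator:ALzero}--\ref{it:AcoorectionOperator:estimate}.

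Concretely, in the doubly anisotropic case $(\epsilon_\chi,\sigma_\chi)$ had to be obtained as the minimiser of a uniformly convex problem, which made the assignment $\chi\mapsto(\epsilon_\chi,\sigma_\chi)$ only uniformly continuous. In the isotropic case, however, the proof of Lemma~\ref{lem:identifyEquationSpace}~\ref{it:identifyEquationSpace:rCase} exhibits a \emph{linear} isomorphism between $\WSob^{-1,-2k}_{r,r}(\IT_{d+1})$ and $(\sobW{-1,0}{r}+\sobW{0,-2k}{r})(\IT_{d+1})$ realised via the Fourier multipliers $\Phi^\mathrm{t}_{2k},\Phi^\mathrm{x}_{2k}$ from Lemma~\ref{lem:spacetimeConverter}. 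I would therefore set
\begin{equation*}
    \epsilon_\chi \coloneqq \dell_t^{-1}\Phi^\mathrm{t}_{2k}\chi,
    \qquad
    \sigma_\chi \coloneqq \bigl((\nabla^{2k})^\ast\bigr)^{-1}\Phi^\mathrm{x}_{2k}\chi,
\end{equation*}
where the inverses are understood in the Moore--Penrose sense of \eqref{eq:moorePenrose:multiplierDef}. By Lemma~\ref{lem:spacetimeConverter}, Remark~\ref{rem:spatialOpsAsMultiplier} and Lemma~\ref{lem:FM}~\ref{FM:1}, the resulting map $\chi\mapsto(\epsilon_\chi,\sigma_\chi)$ is linear and bounded from $\WSob^{-1,-2k}_{r,r}(\IT_{d+1})$ into $L_r(\IT_{d+1};\IR^m)\times L_r(\IT_{d+1};\IR^m)$, and on the zero-mean part of $\chi$ it satisfies the identity $\chi = \dell_t\epsilon_\chi + (\nabla^{2k})^\ast\sigma_\chi$ by construction of the multipliers.

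With this linear building block in hand, I would define
\begin{equation*}
    \Pi_\A(\epsilon,\sigma)
    \coloneqq \Bigl(\Pi_{\Pd^\ast}\epsilon \,-\, \Pi_{\Pd^\ast}\dell_t^{-1}\dell_t\,\epsilon_\chi,\ \sigma \,+\, \Pi_{\Pd^\ast}(\Qd^\ast\Qd)^{-1}(\nabla^{2k})^\ast\sigma_\chi\Bigr),
\end{equation*}
which is manifestly linear as a sum and composition of the linear Fourier multipliers $\Pi_{\Pd^\ast}$, $\dell_t^{-1}$, $(\Qd^\ast\Qd)^{-1}$, $\Phi^\mathrm{t}_{2k}$ and $\Phi^\mathrm{x}_{2k}$. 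The verification of \ref{it:AcoorectionOperator:ALzero}--\ref{it:AcoorectionOperator:mean} is identical to the doubly anisotropic case, as it only uses the algebraic identities $\Pi_{\Pd^\ast}\Qd^\ast\Qd=\Qd^\ast\Qd$, $\Pd^\ast\Pi_{\Pd^\ast}=0$ and $\fourF(\IA^{-1}\IA v)(0)=0$, all of which are insensitive to how $(\epsilon_\chi,\sigma_\chi)$ is constructed. The norm estimate \ref{it:AcoorectionOperator:estimate} likewise reduces to $\norm{\epsilon_\chi}{L_r}+\norm{\sigma_\chi}{L_r}\lesssim\norm{\chi}{\WSob^{-1,-2k}_{r,r}}$, which is now an immediate consequence of the Mikhlin-type multiplier estimates of Lemma~\ref{lem:FM}~\ref{FM:1} applied to $\dell_t^{-1}\Phi^\mathrm{t}_{2k}$ and $((\nabla^{2k})^\ast)^{-1}\Phi^\mathrm{x}_{2k}$, replacing the uniform convexity argument. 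I do not anticipate any real obstacle here: the conceptual content of the corollary is that the uniformly convex, non-linear splitting of Lemma~\ref{lem:AcorrectionOperator} becomes genuinely superfluous when $p=q=r$, since the Littlewood--Paley-style decomposition of Lemma~\ref{lem:spacetimeConverter} already provides a bounded linear substitute.
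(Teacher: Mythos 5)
Your construction is exactly the one in the paper. The paper's proof of Corollary~\ref{lem:linearProjection} also replaces the uniformly convex splitting by the multipliers $\Phi^{\mathrm t}_{2k},\Phi^{\mathrm x}_{2k}$ from Lemma~\ref{lem:spacetimeConverter}, defining $\bar\epsilon \coloneqq \Pi_{\Pd^\ast}\epsilon - \Pi_{\Pd^\ast}\dell_t^{-1}\Phi^{\mathrm t}_{2k}\chi$ and $\bar\sigma \coloneqq \sigma + \Pi_{\Pd^\ast}(\Qd^\ast\Qd)^{-1}\Phi^{\mathrm x}_{2k}\chi$; your intermediate $\epsilon_\chi,\sigma_\chi$ collapse to precisely this (since $\dell_t^{-1}\dell_t\dell_t^{-1}=\dell_t^{-1}$ and $(\nabla^{2k})^\ast((\nabla^{2k})^\ast)^{-1}\Phi^{\mathrm x}_{2k}=\Phi^{\mathrm x}_{2k}$ on its support), and the verification of \ref{it:AcoorectionOperator:ALzero}--\ref{it:AcoorectionOperator:estimate} is then the same algebra plus the mapping properties $\Phi^{\mathrm t}_{2k}\colon\sobW{-1,0}{r}+\sobW{0,-2k}{r}\to\sobW{-1,0}{r}$, $\Phi^{\mathrm x}_{2k}\colon\sobW{-1,0}{r}+\sobW{0,-2k}{r}\to\sobW{0,-2k}{r}$ that the paper invokes.
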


\begin{proof}
	With the operators $ \Phi_{2k}^\mathrm{t},\Phi_{2k}^\mathrm{x} $ from Lemma~\ref{lem:spacetimeConverter} we can give a simpler decomposition of $ \chi = \dell_t\epsilon -\Qd^\ast\Qd\sigma $ into spatial and temporal components:
	\begin{equation*}
		\bar\epsilon
		\coloneqq \Pi_{\Pd^\ast} \epsilon 
		- \Pi_{\Pd^\ast}
		\, 	\dell_t^{-1}
		\,	\Phi_{2k}^{\mathrm{t}}\chi
		\quad\text{and}\quad 
		\bar\sigma
		\coloneqq \sigma 
		+  	\Pi_{\Pd^\ast}	
		\, 	(\Qd^\ast\Qd)^{-1}
		\, 	\Phi_{2k}^{\mathrm{x}}\chi.
	\end{equation*}
	Properties \ref{it:AcoorectionOperator:ALzero}--\ref{it:AcoorectionOperator:estimate} follow for the \emph{linear} operator
	\begin{equation*}
		\Pi_\A\colon(\epsilon,\sigma)\longmapsto (\bar\epsilon,\bar\sigma)
	\end{equation*}
	by a similar computation as in Lemma~\ref{lem:AcorrectionOperator}.
	For the estimates we use the mapping properties of $ \Phi_{2k}^\mathrm{t},\Phi_{2k}^\mathrm{x} $ in the sense that
	\begin{equation*}
		\Phi_{2k}^\mathrm{t}\colon \sobW{-1,0}{r}+ \sobW{0,-2k}{r}\longto \sobW{-1,0}{r}
		,\quad
		\Phi_{2k}^\mathrm{x}\colon \sobW{-1,0}{r}+ \sobW{0,-2k}{r}\longto \sobW{0,-2k}{r}.
	\end{equation*}
\end{proof}


\subsection{Convergence properties of $\A$}
In the proof of the lower semicontinuity result Theorem~\ref{thm:anisotropicSuff} we make use of the following fact on the convergence of $\A w_j$ whenever the sequence $\{w_j=(\epsilon _j,\sigma_j)\}\subset (\leb p\times\leb q)(\IT_{d+1})$ is \emph{$ (p,q) $-equi-integrable}, that is
\begin{equation}\label{eq:defequi-integrable}
	\lim_{\delta\to 0} \sup_{\substack{U\subset \mathcal T\times \Omega,\\ \LL^{d+1}(U)\le \delta}}
	\sup_{j\in\IN}
	\int_{U} \abs{\epsilon_{j}}^p + \abs{\sigma_j}^q\dx[(t,x)] =0.
\end{equation}
See \cite[Lemma 3.1]{guerraCompensatedCompactnessContinuity2022} for a proof in the isotropic setting and \cite{contiDivCurlLemma2011} for a version in the setting of the $\div$-$\curl$ lemma.

\begin{lemma}\label{lem:DiffOpVsEquiInt}
	Let $ 1<p,q<\infty $ and  $1<r\le\min\{p,q\}$.
	Consider a $(p,q)$-equi-integrable sequence $\{w_j\}_{j\in\IN}\subset(\leb p\times\leb q)(\IT_{d+1})$ and assume that
	\begin{equation*}
		\A w_j\conv 0 \quad \txtin (\WSob^{-1,-2k}_{r,r}\times \WSob_r^{0,-k'})(\IT_{d+1}).
	\end{equation*}
	Then we already have
	\begin{equation*}
		\A w_j\conv 0 \quad \txtin (\WSob^{-1,-2k}_{p,q}\times \WSob_p^{0,-k'})(\IT_{d+1}).
	\end{equation*}
\end{lemma}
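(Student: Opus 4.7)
The plan is to deduce the statement from Vitali's convergence theorem, combining the $\leb r$-type convergence coming from the hypothesis (which yields convergence in measure of a natural "error" quantity) with the $(p,q)$-equi-integrability of $\{w_j\}$ (which is transported to the error quantity through Fourier multiplier bounds).

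I would treat the two components of $\A$ separately. For the spatial component, the homogeneous theory for the constant-rank operator $\Pd^\ast$ furnishes a linear Fourier-multiplier projection $\Pi_{\Pd^\ast}$ onto $\ker \Pd^\ast$ satisfying the equivalence $\norm{\epsilon - \Pi_{\Pd^\ast}\epsilon}{\leb s}\sim \norm{\Pd^\ast\epsilon}{\WSob^{0,-k'}_s}$ for every $s\in(1,\infty)$. Setting $u_j\coloneqq \epsilon_j-\Pi_{\Pd^\ast}\epsilon_j$, the hypothesis yields $u_j\to 0$ in $\leb r$, hence in measure. Since $\Pi_{\Pd^\ast}$ is a Fourier multiplier of the type in Lemma~\ref{lem:FM}\,\ref{FM:1}, it preserves the $L^p$-equi-integrability inherited from $\{\epsilon_j\}$, so $\{u_j\}$ is itself $L^p$-equi-integrable. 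Vitali's theorem upgrades the convergence to $u_j\to 0$ in $\leb p$, whence $\Pd^\ast\epsilon_j\to 0$ in $\WSob^{0,-k'}_p$.

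For the parabolic component I would proceed analogously using the linear decomposition made explicit in the proof of Corollary~\ref{lem:linearProjection}: setting $\alpha_j\coloneqq \dell_t^{-1}\Phi^{\mathrm t}_{2k}\chi_j$ and $\beta_j\coloneqq ((\nabla_x^{2k})^\ast)^{-1}\Phi^{\mathrm x}_{2k}\chi_j$, one has $\chi_j=\dell_t\alpha_j+(\nabla_x^{2k})^\ast\beta_j$ modulo a mean-value constant, and $\norm{\alpha_j}{\leb r}+\norm{\beta_j}{\leb r}\sim \norm{\chi_j}{\WSob^{-1,-2k}_{r,r}}\to 0$. Substituting $\chi_j=\dell_t\epsilon_j-\Qd^\ast\Qd\sigma_j$ and commuting the multipliers $\Phi^{\mathrm t}_{2k},\Phi^{\mathrm x}_{2k}$ through $\dell_t$ and $\Qd^\ast\Qd$ expresses $\alpha_j$ and $\beta_j$ as linear combinations of bounded Fourier multipliers (of the form in Lemma~\ref{lem:FM}\,\ref{FM:1}) applied separately to $\epsilon_j\in\leb p$ and $\sigma_j\in \leb q$. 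Again by preservation of equi-integrability under these multipliers, one obtains a decomposition of $\chi_j$ whose $\epsilon$-part is $L^p$-equi-integrable and $\sigma$-part is $L^q$-equi-integrable; applying Vitali componentwise and reassembling via the characterisation in Lemma~\ref{lem:identifyEquationSpace}\,\ref{it:identifyEquationSpace:pqCase} gives $\chi_j\to 0$ in $\WSob^{-1,-2k}_{p,q}$.

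The main obstacle is justifying the preservation of $L^p$- and $L^q$-equi-integrability under Fourier multipliers of the type in Lemma~\ref{lem:FM}\,\ref{FM:1}; a priori such operators fail to preserve uniform integrability. The cleanest route is via the de la Vallée Poussin characterisation: any $p$-equi-integrable subset of $\leb p$ is bounded in some reflexive Orlicz space $L^\Phi$ with Boyd indices equal to $p$, and a Boyd-type interpolation theorem then shows that Fourier multipliers bounded on $\leb s$ for all $s$ in a neighbourhood of $p$ remain bounded on $L^\Phi$, so their images form a bounded and hence $p$-equi-integrable subset of $\leb p$.
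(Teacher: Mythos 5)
Your treatment of the second component $\Pd^\ast\epsilon_j$ is sound and is in fact simpler than the paper's argument (which handles both components at once by pairing against a truncated test function): $\Pd^\ast$ acts only on $\epsilon_j$, so $u_j\coloneqq\epsilon_j-\Pi_{\Pd^\ast}\epsilon_j$ is a single Mikhlin-type multiplier applied to the $L^p$-equi-integrable sequence $\{\epsilon_j\}$, the $L^r$-convergence gives convergence in measure, and Vitali applies directly. The preservation of $(p,q)$-equi-integrability under multipliers of the form in Lemma~\ref{lem:FM}~\ref{FM:1} is indeed available (the paper itself invokes it, citing \cite[Lemma~2.14]{FM}), so that ingredient is not an issue, whatever route one takes to establish it.

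The treatment of the first component, however, has a genuine gap, precisely where the double anisotropy enters. Your decomposition yields
\begin{equation*}
	\alpha_j = \Phi^{\mathrm t}_{2k}\epsilon_j - \dell_t^{-1}\Phi^{\mathrm t}_{2k}\Qd^\ast\Qd\,\sigma_j,
	\qquad
	\beta_j = ((\nabla_x^{2k})^\ast)^{-1}\Phi^{\mathrm x}_{2k}\dell_t\epsilon_j - ((\nabla_x^{2k})^\ast)^{-1}\Phi^{\mathrm x}_{2k}\Qd^\ast\Qd\,\sigma_j,
\end{equation*}
so each of $\alpha_j$ and $\beta_j$ is a \emph{sum} of an $L^p$-piece (coming from $\epsilon_j$) and an $L^q$-piece (coming from $\sigma_j$), i.e.\ $\alpha_j,\beta_j\in L^p+L^q = L^{\min\{p,q\}}(\IT_{d+1})$ and nothing better. ``Vitali componentwise'' is not available: you only know that the sums $\alpha_j,\beta_j$ converge to $0$ in $L^r$, hence in measure, but the individual summands need not converge in measure at all — for instance if $\A w_j=0$ exactly, then $\alpha_j\equiv 0$ while $\Phi^{\mathrm t}_{2k}\epsilon_j$ is generically non-zero, so the two summands cancel rather than each vanishing. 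Applying Vitali to the full $\alpha_j,\beta_j$ with their $L^{\min\{p,q\}}$-equi-integrability only upgrades the convergence to $L^{\min\{p,q\}}$, which via Lemma~\ref{lem:identifyEquationSpace}~\ref{it:identifyEquationSpace:pqCase} gives $\chi_j\to 0$ in $\WSob^{-1,-2k}_{\min\{p,q\},\min\{p,q\}}$. When $p\neq q$ this is strictly weaker than the claim: one has the continuous (strict) embedding $\WSob^{-1,-2k}_{p,q}\hookrightarrow\WSob^{-1,-2k}_{\min\{p,q\},\min\{p,q\}}$, so the target space $\WSob^{-1,-2k}_{p,q}$ is not reached. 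The paper's proof circumvents this by not decomposing $\chi_j$ at all: it works with the dual pairing $\langle\A w_j,\Psi\rangle$ and applies a parabolic Lipschitz truncation to the test function $\Psi$, so that the two exponents $p'$ and $q'$ remain attached to $\dell_t\psi$ and $\nabla_x^{2k}\psi$ respectively. The truncated $\Psi^\lambda$ is bounded in the $(r',r')$-scale (making the hypothesis usable against it), while the error $\Psi-\Psi^\lambda$ is supported on a set of measure $\lesssim\lambda^{-1}$ (making $(p,q)$-equi-integrability usable against it). This separation at the level of the test function is exactly what your approach lacks.
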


\begin{proof}
	The idea is to use parabolic Lipschitz truncation on the dual formulation of the $\WSob^{-1,-2k}_{p,q}\times \WSob_p^{0,-k'}$-norm.
	Recall that for any test function $\Psi\in \VSob^{1,2k}_{p',q'}\times \VSob_{p'}^{0,k'}$ and $\lambda>1$, there exists a function $\Psi^{\lambda}$ with the following properties (see e.g \cite{BDS}):
	\begin{enumerate}[label=(\roman*)]
		\item $\LL^{d+1}(\{\Psi\neq\Psi^\lambda\})
		\le C_{r,d}\, \lambda^{-1}\, \Big(1+\norm{\Psi}{\VSob^{1,2k}_{p',q'}\times \VSob_{p'}^{0,k'}}\Big)^{r'}$ 
		\hfill (small 'bad' set);
		\item $\norm{\Psi^\lambda}{\VSob^{1,2k}_{r',r'}\times \VSob_{r'}^{0,k'}}\le C_{r,d}\, \lambda$
		\hfill (Lipschitz truncation);
		\item $\norm{\Psi^\lambda}{\VSob^{1,2k}_{p',q'}\times \VSob_{q}^{0,k'}} \le C_d\, \norm{\Psi}{\VSob^{1,2k}_{p',q'}\times \VSob_{q}^{0,k'}}$
		\hfill (stability).
	\end{enumerate}
	To test the convergence, let $\Psi=(\psi,\phi)\in C^\infty(\IT_{d+1};\IR^m\times\IR^n)$ with $\norm{\Psi}{ \VSob^{1,2k}_{p',q'}\times \WSob_{p',0}^{0,k'}}\le 1$.
	Let $\Psi^\lambda=(\psi^\lambda,\phi^\lambda)$ be the corresponding Lipschitz truncation.
	We split the calculation into two parts
	\begin{equation*}
		\pair{\A w_j,\Psi}{}
		=\pair{\A w_j,\Psi-\Psi^\lambda}{} + \pair{\A w_j,\Psi^\lambda}{}.
	\end{equation*}
	On the first term we use the $(p,q)$-equi-integrability of $\{w_j\}_{j\in\IN}$. 
	Note that for a multiplier $\phi$ as in Lemma~\ref{lem:FM} the sequence $\{M_\phi w_j\}_{j\in\IN}$ is also $(p,q)$-equi-integrable (c.f. \cite[Lemma 2.14]{FM}).
	We will use this fact for the multipliers
	\begin{equation*}
		M_1\coloneqq ((\nabla_x^{k'})^\ast)^{-1}\Pd^\ast\quad \text{and}\quad M_2\coloneqq ((\nabla_x^{2k})^\ast)^{-1}\Qd^\ast\Qd,
	\end{equation*}
	which are $0$-homogeneous in space-time. 
	With this, we compute
	\begin{equation*}
		\begin{split}
			\pair{\A w_j,\Psi-\Psi^\lambda}{}
			&= 	\int_{\IT_{d+1}}
			\epsilon_j\cdot \big(-\dell_t(\psi-\psi^\lambda) + \Pd(\phi-\phi^\lambda)\big)
			+	\sigma_j\cdot(\Qd^\ast\Qd(\psi-\psi^\lambda))			
			\dx[(t,x)]
			\\	&= 	\int_{\IT_{d+1}}
			\big(
			-\epsilon_j\cdot\dell_t(\psi-\psi^\lambda) 
			+ 	M_1\epsilon_j 
			\cdot \nabla_x^{k'}(\phi-\phi^\lambda)
			\big)
			\dx[(t,x)]
			\\	&\kern4em
			+	\int_{\IT_{d+1}}	
			M_2\sigma_j
			\cdot	
			(\nabla_x^{2k}(\psi-\psi^\lambda))			
			\dx[(t,x)]
			\\ 	&\le 
			\left (
			\int_{\{\psi\neq\psi^\lambda\}} 	
			\abs{\epsilon_{j}}^p
			+\abs{M_1\epsilon_j}^p
			\dx[(t,x)]\right )^{1/p}
			\cdot \left (
			\norm{\psi-\psi^{\lambda}}{\VSob_{p'}^{1,0}}
			+\norm{\phi-\phi^{\lambda}}{\VSob_{p'}^{0,k'}} 
			\right )
			\\	&\kern4em
			+\left (\int_{\{\psi\neq\psi^\lambda\}} 
			\abs{M_2\sigma_j}^q
			\dx[(t,x)]
			\right )^{1/q}
			\cdot
			\norm{\psi-\psi^{\lambda}}{\VSob_{q'}^{0,2k}}
			\\ 	&\le C\, \omega(C\, \lambda^{-1})
			\,  \norm{\Psi}{\VSob^{1,2k}_{p',q'}\times \VSob_{p'}^{0,k'}} 
            \\ &\leq C \omega (C \lambda^{-1}),
		\end{split}
	\end{equation*}
	where 
	\begin{equation*}
		\omega(\delta)\coloneqq \sup_{\abs{U}\le \delta}\sup_{j\in \IN} 
		\left (
		\int_{U} 	
		\abs{\epsilon_{j}}^p
		+\abs{M_1\epsilon_j}^p
		\dx[(t,x)]
		\right )^{1/p}
		+
		\left (
		\int_{U} 
		\abs{M_2\sigma_j}^q
		\dx[(t,x)]
		\right )^{1/q}
	\end{equation*}
	is the modulus of $(p,q)$-equi-integrabilty. 
	For the second term, we can estimate
	\begin{equation*}
		\begin{split}
			\pair{\A w_j,\Psi^\lambda}{}
			&\le \norm{\A w_j}{ \WSob^{-1,-2k}_{r,r}\times \WSob_{r}^{0,-k'}}\,
			\norm{\Psi^\lambda}{\VSob^{1,2k}_{r',r'}\times \VSob_{r'}^{0,k'}}
			\\	&\le C\,\lambda\, \norm{\A w_j}{ \WSob^{-1,-2k}_{p,q}\times \WSob_{r}^{0,-k'}}.
		\end{split}
	\end{equation*}
	Since the right-hand sides of both previous inequalities are independent of $\Psi$, we can take the supremum and observe
	\begin{equation*}
		\norm{\A w_j}{\WSob^{-1,-2k}_{p,q}\times \WSob_{p}^{0,-k'}} 
		\le 
		C\, \omega(C\, \lambda^{-1})
		+	C\,\lambda\, \norm{\A w_j}{ \WSob^{-1,-2k}_{r,r}\times \WSob_{r}^{0,-k'}}.
	\end{equation*}
	Since $\, \norm{\A w_j}{ \WSob^{-1,-2k}_{r,r}\times \WSob_{r}^{0,-k'}} \to 0$ by assumption, by taking a suitable subsequence $\lambda_j\to\infty$, we conclude 
	\begin{equation*}
		\A w_j\conv 0 \quad \txtin  (\WSob^{-1,-2k}_{p,q}\times \WSob_{p}^{0,-k'})(\IT_{d+1}).
	\end{equation*}
\end{proof}


\subsection{$\A$-quasiconvexity}\label{sec:3_propA:2_Aqcvx}
We now consider a functional $I$ together with an integrand $f$ that depends on both variables $\epsilon$ and $\sigma$. 
In more detail, we assume that $f \colon \IR^m \times \IR^m \to [0,\infty)$ is continuous and furthermore obeys
\begin{equation} \label{eq:growth}
	f(\hat \epsilon,\hat \sigma) 
	\leq C_1\,(1+ \vert \hat \epsilon \vert^p + \vert \hat \sigma \vert^q).
\end{equation}
We consider the $(d+1)$-torus $\IT_{d+1}$. 
Denote by $C_{\#}^{\infty}(\IT_{d+1})$ those smooth functions that have average zero on the torus. 
In accordance with \cite{FM}, we define the concept of $\A$-quasiconvexity as follows.
\begin{definition} \label{def:quasiconvexity}
	We call a continuous function $f \colon \IR^m \times \IR^m \to [0,\infty)$ $\A$-quasiconvex if for all $(\hat {\epsilon},\hat {\sigma}) \in \IR^m \times \IR^m$ and all $(\epsilon,\sigma) \in C^{\infty}_{\#}(\IT_{d+1};\IR^m \times \IR^{m})$ with $\A(\epsilon,\sigma)=0$ we have 
	\begin{equation} \label{Jensen}
		f(\hat {\epsilon},\hat {\sigma}) \leq \int_{\IT_{d+1}} f(\hat {\epsilon}+\epsilon(t,x),\hat {\sigma}+\sigma(t,x)) \dx[(t,x)].
	\end{equation}
\end{definition}
Of course, we could extend the definition to functions $f \colon \IR^m \to \R \cup \{\infty\}$, but the lower-emicontinuity results below often use the non-negativity assumption (see also \cite{GR2}).
While the lower semicontinuity results of \cite{FM} are left untouched (also cf. Theorem~\ref{thm:lsc_nec} below), we first prove some elementary observations on functions enjoying this $\A$-quasiconvexity condition.

\begin{lemma} \label{lemma:base}
	Suppose that $f \colon \IR^m \times \IR^m \to [0,\infty)$ is $\A$-quasiconvex. 
	Then the following statements hold:
	\begin{enumerate}[label=(\roman*)]
		\item \label{it:base:1_coneConvex}
		$f$ is convex along $\Lambda_1^\IR \cup \Lambda_2^\IR$ (recall Remark \ref{rem:realCone}), i.e. for any $w \in \Lambda_1^\IR \cup \Lambda_2^\IR$ and $(\hat\epsilon,\hat\sigma)\in \IR^m\times\IR^m$ the function
		\[
		\lambda \longmapsto f((\hat \epsilon,\hat \sigma)+\lambda w)
		\]
		is convex.
		\item \label{it:base:2_PQcvx}
		Fix $\hat \sigma_0 \in \IR^m$. 
		Then $\hat \epsilon \mapsto f(\hat \epsilon,\hat {\sigma}_0)$ is $\Pd^{\ast}$-quasiconvex.
		\item \label{it:base:3_2ndConvex}
		Fix $\hat \epsilon_0 \in \IR^m$. 
		Then $\hat \sigma \mapsto f(\hat {\epsilon}_0,\hat \sigma)$ is convex.
		\item \label{it:base:4_locallyLipschitz}
		If $f$ in addition obeys \eqref{eq:growth}, then $f$ is locally Lipschitz continuous and
		\begin{align*}
			\abs{f(\hat \epsilon,\hat \sigma) -f(\hat \epsilon',\hat \sigma')} 
			\,\leq&
			C\,
			(1  + \abs{\hat \epsilon}^{p-1} 
			+ \abs{\hat \epsilon'}^{p-1}     
			+ \abs{\hat \sigma}
			+ \abs{\hat \sigma'}
			)
			\,  
			\abs{\hat \epsilon-\hat \epsilon'}
			\\  &+
			C\,
			(1  + \abs{\hat \epsilon}
			+ \abs{\hat \epsilon'}
			+ \abs{\hat \sigma}^{q-1}
			+ \abs{\hat \sigma'}^{q-1}
			)
			\,  
			\abs{\hat \sigma-\hat \sigma'}.
		\end{align*}
		\item \label{it:base:5_epsSigQuasiAffine}
		The functions $(\hat \epsilon,\hat \sigma) \mapsto \hat \epsilon \cdot \hat \sigma$ and $(\hat \epsilon,\hat \sigma) \mapsto -\hat \epsilon \cdot \hat \sigma$ are both $\A$-quasiconvex.
	\end{enumerate}
\end{lemma}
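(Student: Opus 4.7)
The plan is to handle (i)--(iii) and (v) via laminate constructions in the characteristic cone $\Lambda_\A$ combined with the Jensen inequality \eqref{Jensen}, and to deduce (iv) from (ii)--(iii) using the standard Lipschitz estimate for quasiconvex functions with polynomial growth.

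For (i), given $w = (\hat\epsilon_w,\hat\sigma_w) \in \Lambda_1^\IR$ with witness $\xi_x \in \IR^d\setminus\{0\}$, I would first reduce to $\xi_x \in \IZ^d\setminus\{0\}$ by approximating the direction of $\xi_x$ by rational directions, using continuity of $f$ and continuity of the Grassmannian-valued maps $\xi_x \mapsto \ker\Qd[\xi_x]$ and $\xi_x \mapsto \ker\Pd^*[\xi_x]$ (ensured by the constant-rank property). For $\xi_x \in \IZ^d$, pick any scalar profile $\phi \in C_{\#}^{\infty}(\IT_1)$ and set $(\epsilon,\sigma)(t,x) = \phi(\xi_x \cdot x)\cdot w$. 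Since every Fourier mode lives on $\IZ \xi_x$ and $\Qd,\Pd^*$ are homogeneous, the pair lies in $\ker\A$ with zero mean. Applying \eqref{Jensen} with $\phi$ approximating a mean-zero two-valued step function with values $\lambda_1 - \lambda_0$ and $\lambda_2 - \lambda_0$ (where $\lambda_0 = \tfrac{\lambda_1+\lambda_2}{2}$) yields midpoint convexity of $\lambda \mapsto f((\hat\epsilon,\hat\sigma) + \lambda w)$; continuity of $f$ upgrades this to genuine convexity. For $w = (0,\hat\sigma_w) \in \Lambda_2^\IR$ the construction is analogous with a purely temporal profile $(\epsilon,\sigma)(t,x) = (0,\phi(\xi_t t)\hat\sigma_w)$, using that $\Qd^*\Qd\sigma = 0$ when $\sigma$ has no spatial variation.

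Part (ii) is then immediate: for $\psi \in C_{\#}^{\infty}(\IT_d;\IR^m)$ with $\Pd^*\psi = 0$, the pair $(\epsilon,\sigma)(t,x) = (\psi(x),0)$ lies in $\ker\A$, and \eqref{Jensen} is precisely $\Pd^*$-quasiconvexity in $\hat\epsilon$. Part (iii) is a special case of (i) applied to directions $(0,\hat\sigma)\in\Lambda_2^\IR$ (convexity along every such direction being equivalent to joint convexity in $\hat\sigma$). For (v), expanding $(\hat\epsilon+\epsilon)\cdot(\hat\sigma+\sigma)$ reduces the claim to $\int_{\IT_{d+1}} \epsilon\cdot\sigma\dx[(t,x)] = 0$ for every mean-zero $(\epsilon,\sigma)\in C_{\#}^{\infty}$ with $\A(\epsilon,\sigma) = 0$. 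Via Parseval, $\int \epsilon\cdot\sigma = \sum_{\xi\neq 0}\fourF\epsilon(\xi)\cdot\overline{\fourF\sigma(\xi)}$; Lemma~\ref{lem:studyKernel}\ref{it:studyKernel:decomposition} then treats the three regimes separately. On $\{\xi_t = 0\}$, $\fourF\sigma(\xi) \in \ker\Qd[\xi_x] = \Image\Pd[\xi_x]$ and $\fourF\epsilon(\xi) \in \ker\Pd^*[\xi_x] = (\Image\Pd[\xi_x])^\perp$, giving pointwise orthogonality; on $\{\xi_x = 0\}$ one has $\fourF\epsilon(\xi) = 0$; on $\{\xi_t\neq 0,\, \xi_x\neq 0\}$ one computes $\fourF\epsilon(\xi)\cdot\overline{\fourF\sigma(\xi)} = (2\pi i\xi_t)^{-1}\,|\Qd[\xi_x]\fourF\sigma(\xi)|^2$, which is purely imaginary and therefore cancels against the contribution from $-\xi$.

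For (iv), I invoke the standard fact that a non-negative $\mathcal B$-quasiconvex function (with $\mathcal B^*$ spanning) bounded by $M$ on $B_R(x_0)$ is Lipschitz on $B_{R/2}(x_0)$ with constant $\lec M/R$. Splitting $|f(\hat\epsilon,\hat\sigma) - f(\hat\epsilon',\hat\sigma')| \le |f(\hat\epsilon,\hat\sigma) - f(\hat\epsilon',\hat\sigma)| + |f(\hat\epsilon',\hat\sigma) - f(\hat\epsilon',\hat\sigma')|$ and applying this to the partial functions from (ii) and (iii): for the first term the growth \eqref{eq:growth} yields $M \le C(1 + R^p + |\hat\sigma|^q)$, hence Lipschitz constant $\le C(R^{-1} + R^{p-1} + |\hat\sigma|^q R^{-1})$. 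Choosing $R \sim \max(|\hat\epsilon|,|\hat\epsilon'|,|\hat\sigma|^{q/p},1)$ and using $q/p' = 1$ (from $1/p + 1/q = 1$) produces the bound $C(1 + |\hat\epsilon|^{p-1} + |\hat\epsilon'|^{p-1} + |\hat\sigma|)$; the second term is handled symmetrically using convexity in $\hat\sigma$. The main obstacle is precisely this optimisation: the asymmetric first-power terms rely crucially on the duality $1/p+1/q=1$, and a naive argument would produce worse exponents such as $|\hat\sigma|^q$. A secondary subtlety arises in (i) when $\xi_x$ is irrational, handled by the continuity argument sketched above.
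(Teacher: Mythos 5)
Your proposal is correct and each part reaches the same conclusion as the paper, but by a noticeably more hands-on route. The paper's own proof is terse: for (i) it extends test functions to be constant in time or space, observes that this yields the purely spatial and purely temporal Jensen inequalities \eqref{eq:base:isotropicConvex}, and then cites \cite[Prop.~3.4]{FM}; for (iv) it simply cites \cite{MP,GR2}; for (v) it uses the potential $u=(\Qd^\ast)^{-1}\epsilon$, notes $\Qd\sigma=\partial_t u$, and integrates by parts to get $\int\epsilon\cdot\sigma=(-1)^k\int u\cdot\partial_t u=0$. Your argument replaces the citation in (i) with an explicit plane-wave/laminate construction (spatial profiles $\phi(\xi_x\cdot x)\,w$ for $\Lambda_1^\IR$, temporal profiles for $\Lambda_2^\IR$, with rational approximation of the frequency and a two-valued step profile to get midpoint convexity), makes the exponent bookkeeping in (iv) explicit, and proves (v) by Parseval plus the frequency decomposition of $\ker\A[\xi]$ from Lemma~\ref{lem:studyKernel}: orthogonality of $\ker\Pd^\ast[\xi_x]$ and $\Image\Pd[\xi_x]=\ker\Qd[\xi_x]$ on $\{\xi_t=0\}$, vanishing of $\fourF\epsilon$ on $\{\xi_x=0\}$, and purely imaginary contributions on $\{\xi_t\neq0,\xi_x\neq0\}$ that cancel against $-\xi$. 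What the paper's approach buys is brevity and direct reuse of established machinery; what yours buys is a self-contained argument in which the role of the characteristic cone and the standing spanning hypothesis is visible. One small slip: in (iv) you write that the Lipschitz estimate for $\mathcal B$-quasiconvex functions requires $\mathcal B^\ast$ to be spanning. The condition should be that the wave cone of $\mathcal B$ itself spans (so with $\mathcal B=\Pd^\ast$ the requirement is precisely that $\Pd^\ast$ is spanning, which is the paper's standing assumption in Remark~\ref{rem:assumePspanning}); as stated, "$\mathcal B^\ast$ spanning" would amount to $\Pd$ spanning, which is not what is needed and not assumed.
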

\begin{proof} 
	\begin{enumerate}[leftmargin=0cm,itemindent=1cm]
	\item[\ref{it:base:1_coneConvex}]  
		Convexity along the directions $\Lambda_1^\IR \cup \Lambda_2^\IR$ follows from the well-known case of $\IA$-quasiconvexity for isotropic operators $\IA$:
		By extending functions to be constant in time and space respectively, we observe that $\A$-quasiconvexity implies 
		\begin{equation}\label{eq:base:isotropicConvex}
			\begin{split}
				f(\hat\epsilon,\hat\sigma) 
				\le& \int_{\IT_{d}} f((\hat\epsilon,\hat\sigma) + w)\dx,
				\quad  \forall w\in C_\#^\infty(\IT_{d};\IR^{m}\times\IR^m),
				\,      \Qd w_2,\Pd^\ast w_1 =0
				\ \text{and}
				\\  f(\hat\epsilon,\hat\sigma) 
				\le& \int_{\IT_{1}} f((\hat\epsilon,\hat\sigma) + w)\dx[t],
				\quad \forall  w\in C_\#^\infty(\IT_{1};\IR^{m}\times\IR^m),
				\,      \dell_t w_1=0.
			\end{split}
		\end{equation}
		From there, the proof can be adapted from \cite[Prop. 3.4]{FM} to our setting of higher-order operators.
		\item[\ref{it:base:2_PQcvx}]
		$\Pd^\ast$-quasiconvexity follows from \eqref{eq:base:isotropicConvex}.
	\item[\ref{it:base:3_2ndConvex}]
		Convexity in the second argument is a consequence of claim \ref{it:base:1_coneConvex}, as $\Lambda_2^\IR=\{0\}\times \IR^m$.
	\item[\ref{it:base:4_locallyLipschitz}]
		This is a straightforward adaptation from the homogeneous case, we refer to \cite{MP,GR2} for details.
	\item[\ref{it:base:5_epsSigQuasiAffine}]
		Let $(\epsilon,\sigma)\in C_\#^\infty(\IT_{d+1};\IR^m\times\IR^m)$ with $\A (\epsilon,\sigma)=0$.
		Recall that the second equation $\Pd^\ast \epsilon=0$ allows us to solve $\epsilon=\Qd^\ast u$ via  \eqref{eq:moorePenrose:multiplierDef}.
		Hence, we may integrate by parts to obtain
		\begin{align*}
			\int_{\IT_{d+1}}
			\epsilon\cdot \sigma
			\dx[(t,x)]
			& =   \int_{\IT_{d+1}}
			\Qd^\ast u\cdot \sigma
			\dx[(t,x)]
			=   (-1)^k
			\int_{\IT_{d+1}}
			u\cdot \Qd\sigma
			\dx[(t,x)]
			\\  &=   (-1)^k
			\int_{\IT_{d+1}}
			u\cdot \dell_t u
			\dx[(t,x)]
			=0.
		\end{align*}
		In particular, for any constants $(\hat \epsilon,\hat \sigma)\in \IR^m\times\IR^m$,
		\begin{equation*}
			\hat \epsilon\cdot \hat  \sigma
			= \int_{\IT_{d+1}}
			(\hat \epsilon+\epsilon)\cdot (\hat  \sigma + \sigma)
			\dx[(t,x)].
		\end{equation*}
		From here the claim is immediate.
	\end{enumerate}
\end{proof}


\subsection{$\Lambda_{\A}$-convexity and polyconvexity}\label{sec:3_propA:3_otherConvexity}
Before we come to the lower semicontinuity result in the spirit of \cite{FM}, let us briefly discuss notions that are related to $\A$-quasiconvexity and severely easier to check. 
While any \emph{convex} function is $\A$-quasiconvex, the notion of convexity is usually way too restrictive for the type of functions we consider.

Instead, we study the following sufficiently sharp chain of implications (see also \cite{Mskript,BP})
\begin{equation} \label{implicationchain}
	f \text{ $\A$-polyconvex} \quad \Longrightarrow \quad f \text{ $\A$-quasiconvex} \quad \Longrightarrow \quad
	f \text{ $\Lambda_{\A}$-convex}.
\end{equation}
To give \eqref{implicationchain} some meaning, we define the notions of poly- and cone-convexity.

\begin{definition}[Polyconvexity]
	\label{def:polyconvexity}
	Let $g \colon \IR^m \times \IR^m \to \IR^l$ be a function such that all its coordinates are $\A$-quasiaffine. 
    Let $h \colon \IR^l \to \R$ be convex. We then call
	\[
	f \colon \IR^m \times \IR^m \longto \R, \quad f(\hat \epsilon,\hat \sigma) = h(g(\hat \epsilon,\hat \sigma))
	\]
	an \textbf{$\A$-polyconvex} function.
\end{definition}
Observe that the (affine) identity map $(\hat \epsilon,\hat \sigma) \mapsto (\hat \epsilon,\hat \sigma)$ and $(\hat \epsilon,\hat \sigma) \mapsto \hat \epsilon \cdot \hat \sigma$ are both $\A$-quasiaffine. 
So we usually consider  $\A$-polyconvex functions of the form 
\[
f(\hat \epsilon,\hat \sigma) = h(\hat \epsilon,\hat \sigma,\hat \epsilon \cdot \hat \sigma)
\]
for convex $h \colon \IR^m \times \IR^m \times \R \to \R$.

\begin{definition}[Cone convexity]
	Let $f \colon \IR^m \times \IR^m \to \R$. We say that $f$ is $\Lambda_{\A}$-convex if the following assertions hold true:
	\begin{enumerate}[label=(\roman*)]
		\item \label{cone:1} 
		For any $w \in \Lambda_1^\IR \cup \Lambda_2^\IR$ and any $(\hat \epsilon,\hat \sigma) \in \IR^m \times \IR^m$ the function
		\[
		h \longmapsto f((\hat \epsilon,\hat \sigma) + hw) 
		\]
		is convex;
		\item \label{cone:2} 
		Let $w=(w_1,iw_2) \in \Lambda_3^\IR$ and $(\hat \epsilon,\hat \sigma) \in \IR^m \times \IR^m$. 
		The function
		\[
		(h_1,h_2) \longmapsto f(\hat \epsilon+h_1 w_1, \hat \sigma+h_2 w_2)
		\]
		is subharmonic, i.e.
		\begin{equation*}
			f(\epsilon,\sigma)
			\le \int_0^1 f(
			\hat\epsilon+ R\, \sin(2\pi i \theta)\, w_1,
			\hat\sigma + R\, \cos(2\pi i \theta)\, w_2
			)\dx[\theta]
		\end{equation*}
		for all $R>0$.
	\end{enumerate}
\end{definition}
We highlight that condition \ref{cone:1} is the standard condition for convexity for \emph{homogeneous} differential operators and corresponds to \emph{rank-one} convexity in the special case of quasiconvexity. 
In contrast, condition \ref{cone:2} is only appearing in the non-homogeneous case and seems to be, at least in the context of $\A$-quasiconvexity, quite new.

\begin{lemma}\label{lem:polyConeCvx}
	Let $f \in C(\IR^m \times \IR^m)$. 
	Then the chain of implications \eqref{implicationchain} holds true:
	\begin{enumerate} [label=(\roman*),ref=(\roman*)]
		\item\label{it:implicationchain:polyToQuasi} if $f$ is $\A$-polyconvex, then $f$ is $\A$-quasiconvex;
		\item\label{it:implicationchain:QuasiToCone} if $f$ is $\A$-quasiconvex, then $f$ is $\Lambda_{\A}$-convex.
	\end{enumerate}
\end{lemma}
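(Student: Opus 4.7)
The plan is to prove each implication separately. For \ref{it:implicationchain:polyToQuasi}, I would apply Jensen's inequality directly. If $f = h\circ g$ with each component $g_i$ of $g$ being $\A$-quasiaffine, then applying Definition~\ref{def:quasiconvexity} to both $\pm g_i$ yields the mean-preservation identity
\begin{equation*}
\int_{\IT_{d+1}} g_i\bigl((\hat\epsilon,\hat\sigma) + (\epsilon,\sigma)(z)\bigr)\dx[z] = g_i(\hat\epsilon,\hat\sigma)
\end{equation*}
for every admissible $(\epsilon,\sigma)\in C_\#^\infty(\IT_{d+1};\IR^m\times \IR^m)$ with $\A(\epsilon,\sigma)=0$. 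Convexity of $h$ together with Jensen's inequality then delivers the $\A$-quasiconvex inequality for $f$; no substantial obstacle arises here.

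For \ref{it:implicationchain:QuasiToCone}, condition \ref{cone:1} on $\Lambda_1^\IR \cup \Lambda_2^\IR$ is already covered by Lemma~\ref{lemma:base}\ref{it:base:1_coneConvex}, so the heart of the argument is the subharmonicity condition \ref{cone:2} along $\Lambda_3^\IR$. My plan is to construct an explicit plane wave in $\ker\A$. Fix $w=(w_1,iw_2)\in\Lambda_3^\IR$ associated with a frequency $\xi = (\xi_t,\xi_x)$, $\xi_t,\xi_x\neq 0$, satisfying $2\pi\xi_t w_1=\Qd^\ast\Qd[\xi_x]w_2$. If $\xi\in\IZ^{d+1}$, I would test $\A$-quasiconvexity against
\begin{equation*}
R\psi(z) \coloneqq R\bigl(w_1\sin(2\pi\xi\cdot z),\, w_2\cos(2\pi\xi\cdot z)\bigr) \in C_\#^\infty(\IT_{d+1};\IR^m\times\IR^m), \qquad R>0.
\end{equation*}
Because $\Qd^\ast\Qd[\cdot]$ is a real polynomial of even degree $2k$, one has $\Qd^\ast\Qd[-\xi_x]=\Qd^\ast\Qd[\xi_x]$, so the first row of $\A\psi$ collapses to $(2\pi\xi_t w_1-\Qd^\ast\Qd[\xi_x]w_2)\cos(2\pi\xi\cdot z)=0$; the second row vanishes because the relation forces $w_1\in\Image\Qd^\ast[\xi_x]=\ker\Pd^\ast[\xi_x]$. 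The Fubini-type identity $\int_{\IT_{d+1}}G(\xi\cdot z)\dx[z]=\int_0^1 G(s)\dx[s]$ for a nonzero integer $\xi$ then rewrites the resulting inequality as condition \ref{cone:2}.

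The main obstacle is the case of general $\xi\in\IR^{d+1}$, where the plane wave is not periodic on $\IT_{d+1}$. My plan is to argue by density. Given $(w_1,w_2)$ associated with real $\xi$, choose rational $\xi^{(n)}\to\xi$ with nonzero components and define $w_1^{(n)}\coloneqq\tfrac{1}{2\pi\xi^{(n)}_t}\Qd^\ast\Qd[\xi^{(n)}_x]w_2$, so that $w_1^{(n)}\to w_1$ by continuity. For rational $\xi^{(n)}$, a suitable natural-number scaling $\lambda_n$ (built from the common denominators of $\xi^{(n)}_t$ and the entries of $\xi^{(n)}_x$) makes $(\lambda_n^{2k}\xi^{(n)}_t,\lambda_n\xi^{(n)}_x)\in\IZ^{d+1}$, and invariance of the $\Lambda_3$-relation under parabolic scaling (Lemma~\ref{lem:studyKernel}\ref{it:studyKernel:scaling}) shows that $(w_1^{(n)},w_2)$ is admissible at the integer frequency, so the previous step applies to it. Passing to the limit via continuity of $f$ and the growth bound \eqref{eq:growth} (dominated convergence on $[0,1]$) then transfers the subharmonicity inequality back to the original $(w_1,w_2)$.
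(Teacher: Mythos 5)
Your proposal is correct and follows essentially the same route as the paper's proof: for (i) the quasiaffinity/Jensen argument is identical, and for (ii) you build the same real plane-wave test functions on $\IT_{d+1}$ at integer frequencies, observe the Fubini collapse to an integral over one period, and then pass to irrational frequencies by a density argument using parabolic rescalings. (Two cosmetic remarks: your invocation of \eqref{eq:growth} and dominated convergence in the final limit is an unnecessary extra hypothesis — the lemma only assumes continuity, and since the arguments of $f$ stay in a fixed compact set, uniform continuity already gives the convergence of the $[0,1]$-integrals; also, for the parabolic rescaling to land on an integer vector you should take $\lambda_n$ to be the common denominator of \emph{all} entries including $\xi_t^{(n)}$, since then $\lambda_n^{2k}\xi_t^{(n)}=\lambda_n^{2k-1}(\lambda_n\xi_t^{(n)})\in\IZ$ automatically.)
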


\begin{proof}
	\begin{enumerate}[leftmargin=0pt, itemindent=1cm]
		\item [\ref{it:implicationchain:polyToQuasi}]
		Let $f=h\circ g$, with $h$ convex and $g$ $\A$-quasiaffine in each component.
		Then, using $\A$-quasiaffinity and Jensen's inequality,
		\begin{equation*}
			f(\hat\epsilon,\hat\sigma)
			= h\left(
			\int_{\IT_{d+1}}
			g((\hat\epsilon,\hat\sigma)+w(t,x))
			\dx[(t,x)]
			\right)
			\le \int_{\IT_{d+1}}
			(h\circ g)((\hat\epsilon,\hat\sigma)+w(t,x))
			\dx[(t,x)],
		\end{equation*}
		for all $w\in C^\infty_\#(\IT_{d+1})$ with $\A w=0$.
		\item[\ref{it:implicationchain:QuasiToCone}]
		Convexity along directions $w\in \Lambda_1^\IR \cup \Lambda_2^\IR$ was the content of Lemma~\ref{lemma:base}. 
		Therefore, we focus on the subharmonicity of $f$ along planes $\{(\hat\epsilon+h_1w_1,\hat\sigma+h_2w_2):h_1,h_2\in\IR\}$, whenever $w=(w_1,iw_2)\in\Lambda_3^\IR$.
        Without loss of generality, assume $R=1$.
		First, assume that $\xi\in\IZ^{d+1}$ with $\xi_t,\xi_x\neq 0$ and $2\pi \xi_t w_1 = \Qd ^\ast\Qd [\xi_x]w_2.$
		In this case we define
		\begin{equation*}
			w(t,x)
			\coloneqq 
			\Big(w_1\, \sin(2\pi\, \xi\cdot ((-1)^kt,x))
			,w_2\, \cos(2\pi\, \xi\cdot ((-1)^k t,x) )\Big)
			,\quad(t,x)\in\IT_{d+1}.
		\end{equation*}
		Clearly $w\in C^\infty_\#(\IT_{d+1};\IR^m\times\IR^m)$ and moreover, by construction,
		$\A w=0$.
		Thus, $\A$-quasiconvexity gives
		\begin{equation*}
			f(\hat\epsilon,\hat\sigma)
			\le \int_{\IT_{d+1}} f((\hat\epsilon,\hat\sigma) + w(t,x))\dx[(t,x)]
			=   \int_{0}^1 f (
			(\hat\epsilon,\hat\sigma) 
			+   (   w_1\,  \sin(2\pi r)
			,   w_2\,  \cos(2\pi r))
			)
			\dx[r].
		\end{equation*}
		The general case, where $\xi\in \IR^{d+1}$ and $2\pi \xi_t w_1 = \Qd ^\ast\Qd [\xi_x]w_2$, can be obtained as a limit of the integer-valued cases:
		Observe that the set 
		\begin{equation*}
			\Xi\coloneqq\{(\lambda^{-2k}\xi_t',\lambda^{-1}\xi'_x): \lambda\in\IN,\xi'\in\IZ^{d+1},\,\xi'_t\neq 0,\xi'_x\neq 0\}
		\end{equation*}
		is dense in $\{\xi'\in\IR^{d+1}:\xi_t'\neq 0,\xi_x'\neq 0\}\subset \IR^{d+1}$.
		Pick a sequence $\xi^j\in\Xi$ so that $\xi^j\conv \xi$ and define 
		\begin{equation*}
			w_1^j\coloneqq \frac{1}{2\pi \xi_t^j}\Qd^\ast \Qd [\xi_x^j]w_2.
		\end{equation*}
		Taking into account the scaling of $\A$ as per Lemma~\ref{lem:studyKernel} \ref{it:studyKernel:scaling}, we see that $(w_1^j,iw_2)\in \ker \A[\widetilde \xi^j]$ for appropriate $\widetilde \xi^j\in \IZ^{d+1}$. 
		The map $\xi'\mapsto \tfrac{1}{2\pi \xi_t'}\Qd^\ast \Qd [\xi_x']$ is continuous on $\{\xi'_t\neq 0,\xi'_x\neq0\}$, hence $w_1^j\conv w_1$.
		As a consequence of the integer-valued case and the continuity of $f$, we conclude
		\begin{align*}
			f(\hat\epsilon,\hat\sigma)
			&\le \lim_{j\to\infty}
			\int_{0}^1 f (
			(\hat\epsilon,\hat\sigma) 
			+   (   w_1^j\,  \sin(2\pi r)
			,   w_2\,  \cos(2\pi r))
			)
			\dx[r]
			\\  &=   \int_{0}^1 f (
			(\hat\epsilon,\hat\sigma) 
			+   (   w_1\,  \sin(2\pi r)
			,   w_2\,  \cos(2\pi r))
			)
			\dx[r].
		\end{align*}
	\end{enumerate}
\end{proof}
\section{Weak lower semicontinuity}\label{sec:4_wlsc}
We now come to an analogue of the weak lower semicontinuity statement of Fonseca \& M\"uller \cite{FM} under differential constraints that are adjusted to our parabolic setting. We only deal with the autonomous setup (i.e. the function $f$ does not depend on $(t,x)$), although our method can be routinely generalised via approximation with locally autonomous functions. 
The main difficulty and difference to \cite{FM} lies in the non-homogeneity of the involved spaces.
To this end, define the functional $I$ acting on $(L_p\times L_q)(\mathcal T\times \Omega;\IR^m\times\IR^m)$, $\mathcal T= (0,T)$, $\Omega \subset \IR^d$ open and bounded, by
\begin{equation} \label{eq:defI}
	I(\epsilon,\sigma) = \int_{\mathcal{T}\times \Omega} f(\epsilon,\sigma) \dx[(t,x)].
\end{equation}

\begin{theorem}[Necessity of $\A$-quasiconvexity] \label{thm:lsc_nec}
	Suppose that $I$ as defined in \eqref{eq:defI} is weakly lower semicontinuous along $\ker \A$, i.e. for all $(\epsilon_j, \sigma_j) \weakto (\epsilon,\sigma)$ in $(L_p \times L_q)(\mathcal{T} \times \Omega;\IR^m\times\IR^m)$
	with $\A(\epsilon_j,\sigma_j) =0$ we have
	\[
	I(\epsilon,\sigma) \leq \liminf_{j \to \infty} I(\epsilon_j,\sigma_j).
	\]
	Assume that $f$ is locally bounded.
	Then $f$ is $\A$-quasiconvex.
\end{theorem}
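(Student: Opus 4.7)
The proof proceeds by testing the lower semicontinuity hypothesis against a highly oscillating, $\A$-free sequence built from the periodic test function appearing in Definition \ref{def:quasiconvexity}. Fix $(\hat\epsilon,\hat\sigma)\in\IR^m\times\IR^m$ and $\psi\in C^\infty_\#(\IT_{d+1};\IR^m\times\IR^m)$ with $\A\psi=0$. Extending $\psi$ periodically to all of $\IR^{d+1}$, I set
\begin{equation*}
	\Psi_n(t,x):=\psi(n^{2k}\,t,\,n\,x),\quad w_n(t,x):=(\hat\epsilon,\hat\sigma)+\Psi_n(t,x),\quad n\in\IN,
\end{equation*}
and view $w_n$ as an element of $(L_p\times L_q)(\mathcal T\times\Omega;\IR^m\times\IR^m)$. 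Since every summand of $\A$ involves at least one derivative, the constant $(\hat\epsilon,\hat\sigma)$ lies in $\ker\A$, while Lemma~\ref{lem:studyKernel} \ref{it:studyKernel:scaling} (applied to the periodic extension) shows that the anisotropic parabolic rescaling preserves $\ker\A$ on $\IR^{d+1}$; locality of $\A$ then yields $\A w_n=0$ on $\mathcal T\times\Omega$. Because $\psi$ has zero mean on $\IT_{d+1}$, a standard equidistribution argument gives $\Psi_n\weakto 0$ and hence $w_n\weakto(\hat\epsilon,\hat\sigma)$ in $(L_p\times L_q)(\mathcal T\times\Omega)$.

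The next step is to identify $\lim_n I(w_n)$. Since $\psi$ is continuous on the compact torus $\IT_{d+1}$, its image is bounded, so the set $(\hat\epsilon,\hat\sigma)+\psi(\IT_{d+1})$ is contained in a compact subset of $\IR^m\times\IR^m$. By local boundedness of $f$, the composition $g(t,x):=f((\hat\epsilon,\hat\sigma)+\psi(t,x))$ is therefore a bounded measurable periodic function on $\IT_{d+1}$, and the same type of averaging argument yields
\begin{equation*}
	I(w_n)
	=\int_{\mathcal T\times\Omega} g(n^{2k}\,t,\,n\,x)\dx[(t,x)]
	\longto
	\abs{\mathcal T\times\Omega}\,\int_{\IT_{d+1}} g(t,x)\dx[(t,x)].
\end{equation*}
Applying the lower semicontinuity hypothesis to the sequence $w_n\weakto(\hat\epsilon,\hat\sigma)$ in $\ker\A$ and using $I(\hat\epsilon,\hat\sigma)=\abs{\mathcal T\times\Omega}\,f(\hat\epsilon,\hat\sigma)$, then dividing both sides by $\abs{\mathcal T\times\Omega}$ produces precisely the $\A$-quasiconvexity inequality \eqref{Jensen} for the given $\psi$; since $(\hat\epsilon,\hat\sigma)$ and $\psi$ were arbitrary, $f$ is $\A$-quasiconvex.

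The main technical burden is the two averaging statements on the non-periodic bounded domain $\mathcal T\times\Omega$. Both follow from the elementary observation that for a bounded measurable $h\in L_\infty(\IT_{d+1})$ (continuous for $g$, and only measurable and bounded in the case of polynomial controls on $f$) and a bounded set $D\subset\IR^{d+1}$, the integrals $\int_D h(n^{2k}\,t,n\,x)\dx[(t,x)]$ converge to $\abs{D}\int_{\IT_{d+1}} h\dx[(t,x)]$ as $n\to\infty$: the anisotropic rescaling $(t,x)\mapsto(n^{2k}t,n\,x)$ traverses full period cells except on a boundary layer of $\mathcal T\times\Omega$ whose volume tends to zero, so the Riemann-sum approximation of $\int_{\IT_{d+1}} h$ by the cellwise averages inside $\mathcal T\times\Omega$ controls everything. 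All remaining pieces of the argument — kernel preservation of $\A$ under constants and parabolic rescaling, locality, linearity — are purely algebraic and rely only on Lemma~\ref{lem:studyKernel}.
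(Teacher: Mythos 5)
Your proof is correct and takes essentially the same route as the paper: both construct the parabolically rescaled $\A$-free competitor from the periodic test function, invoke Lemma~\ref{lem:studyKernel}~\ref{it:studyKernel:scaling} for kernel preservation, use zero mean to get weak convergence to the constant, and pass to the limit in $I$ via a Riemann--Lebesgue (weak-$\ast$ in $L_\infty$) equidistribution argument before dividing by $\abs{\mathcal T\times\Omega}$. The only cosmetic difference is that you parametrise the test function as $(\hat\epsilon,\hat\sigma)+\psi$ with $\psi$ of zero mean, whereas the paper takes $(\epsilon,\sigma)$ with mean $(\hat\epsilon,\hat\sigma)$ directly.
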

\begin{proof}[Proof] 
	Consider $(\epsilon,\sigma)\in C^\infty(\IT_{d+1};\IR^m\times\IR^m)$ with
	\begin{equation*}
		\A (\epsilon,\sigma)=0
		\quad\text{and}\quad 
		\int_{\IT_{d+1}} (\epsilon,\sigma)\dx[(t,x)]
		= (\hat\epsilon,\hat\sigma).
	\end{equation*}
	For each $j\in\IN$ we set
	\begin{equation*}
		(\epsilon_j,\sigma_j)(t,x)
		\coloneqq
		(\epsilon,\sigma)(j^{2k} t,j x)
		,\quad 
		(t,x)\in\mathcal{T}\times\Omega.
	\end{equation*}
    We extend this function periodically to $\R \times \R^d$ and then restrict it to $\mathcal{T} \times \Omega$.
	As argued in Lemma~\ref{lem:studyKernel} \ref{it:studyKernel:scaling}, we still have that
	$\A(\epsilon_j,\sigma_j)=0$.
	Moreover, it is easy to see that
	$(\epsilon_j,\sigma_j)\wconv(\hat\epsilon,\hat\sigma)$ in $(\leb{p}\times \leb q)(\mathcal{T}\times\Omega)$, as $j\to\infty$.
	Thus, weak lower semicontinuity of $I$ gives
	\begin{equation*}
		f(\hat \epsilon,\hat\sigma)
		\le \liminf_{j\to\infty}
		\frac{1}{\LL^{d+1}(\mathcal{T}\times\Omega)}
		\int_{\mathcal{T}\times\Omega} f(\epsilon_j,\sigma_j)\dx[(t,x)].
	\end{equation*}
	On the other hand, since $f$ is locally bounded, it follows that
	\begin{equation*}
		f(\epsilon_j,\sigma_j)
		\wconv[\ast] 
		\int_{\IT_{d+1}}f(\epsilon,\sigma)\dx[(t,x)]
	\end{equation*}
	in $\leb\infty(\mathcal{T}\times\Omega)$.
	From here the claim is a direct consequence.
\end{proof}

To show that $\A$-quasiconvexity is sufficient is slightly more challenging, due to the involved non-homogeneous spaces. 
In particular, we also require a statement where the differential constraint $\A(\epsilon,\sigma)$ is not satisfied exactly, but compact in a suitable negative space.
This result is established in Theorem~\ref{thm:anisotropicSuff} and immediately implies the following:

\begin{theorem}[Sufficiency of $\A$-quasiconvexity] \label{thm:lsc_suff}
	Suppose that $f$ is $\A$-quasiconvex and satisfies the growth condition \eqref{eq:growth}. 
	Then for any sequence  $(\epsilon_j, \sigma_j) \weakto (\epsilon,\sigma)$ in $ (L_p \times L_q )(\mathcal{T} \times \Omega;\IR^m \times \R^m)$
	with $\A(\epsilon_j,\sigma_j) =0$ we have
	\[
	I(\epsilon,\sigma) \leq \liminf_{j \to \infty} I(\epsilon_j,\sigma_j).
	\]
\end{theorem}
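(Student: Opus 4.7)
The statement is the special case $\A w_j\equiv 0$ of the announced Theorem~\ref{thm:anisotropicSuff}, so I would argue the more general version at once. The backbone is a blow-up/localisation argument of Fonseca--M\"uller type \cite{FM}, with two adjustments forced on us by the structure of $\A$: the rescaling on $\mathcal T\times\Omega$ must be \emph{parabolic}, $(t,x)\mapsto (\lambda^{2k}t,\lambda x)$, so that $\ker\A$ is preserved (Lemma~\ref{lem:studyKernel}\ref{it:studyKernel:scaling}), and the Helmholtz-type correction on $\IT_{d+1}$ has to be executed with the \emph{non-linear} projection $\widetilde\Pi_\A$ of Lemma~\ref{lem:AcorrectionOperator} rather than a Fourier multiplier, since we cannot enforce $p=q$.

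\textbf{Decomposition and reduction to the equi-integrable case.} After passing to a subsequence realising the $\liminf$, I would invoke a doubly-anisotropic decomposition lemma (adapted from Kristensen / Fonseca--Leoni) to write $(\epsilon_j,\sigma_j)=(u_j,v_j)+(r_j^1,r_j^2)$, where $(u_j,v_j)$ is $(p,q)$-equi-integrable with the same weak limit $(\epsilon,\sigma)$ and the concentration part $(r_j^1,r_j^2)$ is supported on sets of vanishing measure. Since $f\ge 0$ and $(u_j,v_j)$ is $(p,q)$-equi-integrable, the growth bound \eqref{eq:growth} yields $\liminf_j I(\epsilon_j,\sigma_j)\ge \liminf_j I(u_j,v_j)$. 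Moreover, Lemma~\ref{lem:DiffOpVsEquiInt} combined with $\A(\epsilon_j,\sigma_j)=0$ gives $\A(u_j,v_j)\to 0$ in $\WSob^{-1,-2k}_{p,q}\times \WSob^{0,-k'}_p$, which is the input the projection step demands.

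\textbf{Blow-up, projection, Jensen.} At almost every $(t_0,x_0)\in \mathcal T\times\Omega$, I would apply the Lebesgue differentiation theorem along the parabolic cylinders $Q_r=(t_0-r^{2k}/2,t_0+r^{2k}/2)\times B_r(x_0)$ and, for a diagonal sequence $r_j\to 0$, rescale
\begin{equation*}
	\widetilde w_j(\tau,y)\coloneqq (u_j,v_j)(t_0+r_j^{2k}\tau,\,x_0+r_j y)-(\epsilon,\sigma)(t_0,x_0),\qquad (\tau,y)\in \IT_{d+1}.
\end{equation*}
Parabolic scaling ensures $\A\widetilde w_j\to 0$ in $\WSob^{-1,-2k}_{p,q}\times \WSob^{0,-k'}_p$, so Lemma~\ref{lem:AcorrectionOperator}\ref{it:AcoorectionOperator:estimate} produces $\widetilde\Pi_\A\widetilde w_j-\widetilde w_j\to 0$ in $L_p\times L_q$, while Lemma~\ref{lem:AcorrectionOperator}\ref{it:AcoorectionOperator:mean} ensures $\int_{\IT_{d+1}}\widetilde\Pi_\A\widetilde w_j=0$. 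The $\A$-quasiconvexity inequality applied at $(\epsilon,\sigma)(t_0,x_0)$ then gives
\begin{equation*}
	f\bigl((\epsilon,\sigma)(t_0,x_0)\bigr)
	\le \int_{\IT_{d+1}} f\bigl((\epsilon,\sigma)(t_0,x_0)+\widetilde\Pi_\A\widetilde w_j(\tau,y)\bigr)\dx[(\tau,y)].
\end{equation*}
Using the local Lipschitz bound of Lemma~\ref{lemma:base}\ref{it:base:4_locallyLipschitz} to swap $\widetilde\Pi_\A\widetilde w_j$ for $\widetilde w_j$, reverting the change of variables, and integrating over a Vitali cover of $\mathcal T\times\Omega$ by parabolic cylinders yields $I(\epsilon,\sigma)\le \liminf_j I(u_j,v_j)$, which together with the first step completes the proof.

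\textbf{Main obstacle.} The principal difficulty lies in the interaction between the \emph{non-linearity} of $\widetilde\Pi_\A$ and equi-integrability: the projection error cannot simply be isolated in a linear fashion, so the estimate of Lemma~\ref{lem:AcorrectionOperator}\ref{it:AcoorectionOperator:estimate} must be propagated uniformly in $j$ and in the base point $(t_0,x_0)$. This requires combining the uniform continuity of $\widetilde\Pi_\A$ on bounded subsets of $\WSob^{-1,-2k}_{p,q}\times \WSob^{0,-k'}_p$ with a quantitative $(p,q)$-equi-integrability modulus, and coupling the rate $r_j\to 0$ to these uniform estimates. A secondary issue is that, after the parabolic blow-up, the rescaled fields live only on shrinking cylinders and must be periodised before $\widetilde\Pi_\A$ is applied; this can be achieved by a smooth cutoff, but one has to check that the resulting defect in $\A\widetilde w_j$ remains small in $\WSob^{-1,-2k}_{p,q}\times \WSob^{0,-k'}_p$ so that the projection does not destroy equi-integrability on the way back.
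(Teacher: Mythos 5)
Your proposal assembles the right ingredients — the $(p,q)$-equi-integrable replacement, parabolic scaling (Lemma~\ref{lem:studyKernel}\ref{it:studyKernel:scaling}), the Lipschitz-truncation upgrade of Lemma~\ref{lem:DiffOpVsEquiInt}, the non-linear projection $\widetilde\Pi_\A$, the local Lipschitz estimate of Lemma~\ref{lemma:base}\ref{it:base:4_locallyLipschitz}, and the Jensen inequality of $\A$-quasiconvexity — but routes them through a blow-up at Lebesgue points, whereas the paper approximates the \emph{limit} by a function that is piecewise constant on parabolic cubes, localises to one cube, and then periodises (Steps~2--3). Both are standard Fonseca--M\"uller variants, and your version can in principle be made to work, but two points need repair.

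First, you invoke Lemma~\ref{lem:DiffOpVsEquiInt} on $\mathcal T\times\Omega$ to conclude $\A(u_j,v_j)\to 0$ in $\WSob^{-1,-2k}_{p,q}\times\WSob^{0,-k'}_p$; but that lemma is stated and proved on $\IT_{d+1}$ — its proof is built from Fourier multipliers and parabolic Lipschitz truncation on the torus — so it can only be used \emph{after} the periodisation you defer to the end. Second, and more importantly, you misplace the difficulty. The ``non-linearity of $\widetilde\Pi_\A$'' is not an obstacle: Lemma~\ref{lem:AcorrectionOperator}\ref{it:AcoorectionOperator:estimate} gives the a-priori bound $\Vert w-\widetilde\Pi_\A w\Vert_{L_p\times L_q}\le C_{p,q}\Vert\A w\Vert$ with a constant depending only on $p,q$ and $\A$, and this is applied once at the very end — nothing needs to be linearised, commuted, or propagated uniformly over base points. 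What you call a ``secondary issue'' is in fact the crux: after cutting off by $\eta_\eps$, the defect in the constraint component $\Pd^\ast(\eta_\eps\epsilon_j)$ lives a priori only in $\WSob^{0,-k'+1}_r$, and $\WSob^{0,-k'+1}_r\hookrightarrow\WSob^{0,-k'}_r$ is \emph{not} compact (no simultaneous gain in time). The paper resolves this by splitting with the frequency operators $\Phi^{\mathrm t}_{2k},\Phi^{\mathrm x}_{2k}$ of Lemma~\ref{lem:spacetimeConverter} and feeding the first equation back into the second, as in \eqref{eq:proof_suff:step_3:defST}. Without this step your scheme cannot close, so that is where the effort should concentrate.
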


Although the proof of this theorem is in spirit quite close to the case for homogeneous operators in \cite{FM}, it faces some technical difficulties. 

If $p=q=2$, then the proof closely follows the structure of \cite{FM} with a few differences. 
One may approximate the limit function $(\epsilon,\sigma)$ by a function that is a sum of characteristic functions on small space-time cubes and therefore may reduce to a statement where $(\epsilon,\sigma)$ is constant. 
Using a non-homogeneous \cite[Lemma 2.15]{FM}, i.e. Corollary~\ref{lem:linearProjection}, one may then construct out of the sequence $(\epsilon_j,\sigma_j)$ a periodic $\A$-free sequence of $\leb\infty$ functions close to $(\epsilon_j,\sigma_j)$ for which we may use the definition of $\A$-quasiconvexity (which involves $\A$-free periodic functions). 
The Jensen-type inequality in the definition of $\A$-quasiconvexity then ensures weak lower semicontinuity for $\epsilon_j,\sigma_j$.
The case $p\neq q$ however does not allow this approach, since the compact embeddings of Lemma~\ref{lem:compactembedding} do not extend to the doubly anisotropic spaces $\WSob^{-1,-2k}_{p,q}$ defined in Section~\ref{sec:2_Spaces:pblmSpaces}.

For this reason (and also due to its later relevance in Section~\ref{sec:6_applNS_largeP}) we instead prove a generalised version of Lemma~\ref{thm:lsc_suff}: 
The differential constraint need not be satisfied exactly, but it suffices that it lies compactly in some negative Sobolev space. This space is, however, slightly larger than the negative Sobolev space of \cite{FM}.

The key step of this result lies in the Lipschitz truncation (cf. Lemma~\ref{lem:DiffOpVsEquiInt}), which is well-known in slightly different contexts, for instance existence theory for non-linear fluid equations (cf. \cite{BDF,BDS}). 
That being said, in the context of $\A$-quasiconvexity and weak lower semicontinuity this argument rarely appears. 
We first state the result for the homogeneous case and briefly describe the strategy of the proof before we proceed with the proof of Theorem~\ref{thm:lsc_suff}.


\subsection{The statement for a static problem}\label{sec:4_wlsc:2_intermezzo}
First, we discuss a very slight generalisation of the result by Fonseca \& M\"uller in the static homogeneous setting. 
To this end, let $\Abb \colon C^{\infty}(\IR^d;\IR^m) \to C^{\infty}(\IR^d;\IR^n)$ denote a homogeneous differential operator of order k.
\begin{theorem} \label{FM:generalised}
	Let $f \colon \IR^m \to \R$ be $\mathbb{A}$-quasiconvex and satisfy the growth condition  
    \begin{equation*}
		0\le 
        f(\hat u) 
		\leq C\,(1+ \vert \hat u \vert^p )
		,\quad \hat u\in \IR^m.
	\end{equation*}
    
	Fix $1<r<p$. 
	If a sequence $\{u_j\}_{j\in\IN} \subset L_p(\Omega;\IR^m)$ satisfies $u_j \weakto u$ weakly in $L_p(\Omega;\IR^m)$ and $\mathbb{A} u_j \longto \mathbb{A} u$ strongly in $\WSob^{-k}_r(\Omega;\IR^m)$, as $j \to \infty$, then
	\[
	\int_{\Omega} f(u(x)) \dx \leq \liminf_{j \to \infty} \int_{\Omega} f(u_j(x)) \dx.
	\]   
\end{theorem}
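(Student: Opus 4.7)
The plan is to mimic the Fonseca--Müller blow-up argument from \cite{FM}, adding one extra ingredient -- a Lipschitz truncation step that upgrades the convergence $\Abb u_j \to \Abb u$ from $\WSob^{-k}_r$ to $\WSob^{-k}_p$. Once this upgrade is available, the classical homogeneous proof applies with only cosmetic changes.

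\textbf{Localisation.} After extracting a subsequence that realises the liminf and generating a Young measure $\{\nu_x\}_{x\in\Omega}$ from $\{u_j\}$, the statement reduces to the Jensen-type inequality $f(\langle \nu_{x_0},\mathrm{id}\rangle) \le \langle \nu_{x_0},f\rangle$ at a.e. Lebesgue point $x_0$ of $u$ and of $x\mapsto \langle \nu_x,f\rangle$. Blowing up around $x_0$ via $y\mapsto x_0+\rho y$ with $\rho\to 0^+$, rescaling, and using the $k$-homogeneity of $\Abb$, one obtains on the unit cube $Q=(0,1)^d$ a sequence $v_j \weakto \hat u_0 \coloneqq u(x_0)$ in $L_p(Q;\IR^m)$ with $\Abb v_j \to 0$ in $\WSob^{-k}_r(Q;\IR^n)$.

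\textbf{Equi-integrability, constraint upgrade, and projection.} Using a standard $p$-truncation (cf. \cite{FM}), one replaces $v_j$ by $\tilde v_j$ with $\{|\tilde v_j|^p\}$ equi-integrable, $\tilde v_j \weakto \hat u_0$ in $\leb p(Q;\IR^m)$, and $|\{\tilde v_j \neq v_j\}|\to 0$. Since $\{\tilde v_j\}$ is $p$-equi-integrable and $\Abb \tilde v_j \to 0$ in $\WSob^{-k}_r$, the static analogue of Lemma~\ref{lem:DiffOpVsEquiInt} (pairing $\Abb \tilde v_j$ with a test function $\Psi \in \VSob^{k}_{p'}$ of unit norm, Lipschitz-truncating $\Psi$ at level $\lambda$, and using $p$-equi-integrability to control the error on $\{\Psi\neq \Psi^\lambda\}$) upgrades this to $\Abb \tilde v_j \to 0$ in $\WSob^{-k}_p$. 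Applying the projection onto $\ker \Abb$ from \cite[Lemma 2.14]{FM} then yields $w_j \in \ker\Abb\cap \leb p(Q;\IR^m)$ with mean $\hat u_0$ and $\|w_j - \tilde v_j\|_{L_p(Q)} \to 0$.

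\textbf{Conclusion via $\Abb$-quasiconvexity.} After a cutoff/periodisation on a slightly shrunk subcube $Q'\subset Q$ (the discarded boundary annulus contributes negligibly thanks to $p$-equi-integrability), $\Abb$-quasiconvexity of $f$ gives
\[
f(\hat u_0) \le \frac{1}{|Q'|}\int_{Q'} f(w_j(y)) \dx[y].
\]
The $p$-growth of $f$ together with the local Lipschitz estimate of Lemma~\ref{lemma:base}~\ref{it:base:4_locallyLipschitz} let us replace $w_j$ by $\tilde v_j$, and then by $v_j$ on the set $\{\tilde v_j = v_j\}$ (whose complement contributes an error vanishing by $p$-equi-integrability), completing the blow-up bound and hence the theorem.

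\textbf{Main obstacle.} The decisive step is the Lipschitz truncation upgrade, which is where the gap $r<p$ is bridged: one must balance the Lipschitz bound $\lambda$ of the truncated test function against the measure $\lesssim \lambda^{-1}$ of the bad set, choosing a diagonal sequence $\lambda_j \to \infty$ slowly enough that $\lambda_j\, \|\Abb v_j\|_{\WSob^{-k}_r}\to 0$ while the equi-integrability modulus $\omega(C\lambda_j^{-1})\to 0$. All other steps essentially replicate the homogeneous argument of \cite{FM}.
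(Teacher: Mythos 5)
Your proposal follows precisely the second of the two strategies the paper itself sketches immediately after stating Theorem~\ref{FM:generalised}: reduce to a $p$-equi-integrable sequence, then use a static analogue of Lemma~\ref{lem:DiffOpVsEquiInt} -- Lipschitz truncation of the test function, with the balance between the truncation level $\lambda$ and the modulus of $p$-equi-integrability -- to upgrade the convergence of $\Abb u_j$ from $\WSob^{-k}_r$ to $\WSob^{-k}_p$, and then invoke the classical Fonseca--M\"uller machinery (projection onto $\ker\Abb$, $\Abb$-quasiconvexity). Your Young-measure localisation is a standard stylistic variant of the piecewise-constant approximation used in the proof of Theorem~\ref{thm:anisotropicSuff}; both deliver the same blown-up problem with a constant weak limit. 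One small ordering caveat: the projection of \cite[Lemma 2.14]{FM} is a Fourier multiplier on $\IT_d$, so the cutoff/periodisation (your final paragraph, the analogue of Step~3 in the proof of Theorem~\ref{thm:anisotropicSuff}) must come \emph{before}, or be built into, that projection -- you project the compactly supported, periodically extended, equi-integrable sequence, not the raw blown-up sequence on the cube $Q$. With that reordering the argument is complete and matches the route the paper has in mind; your closing paragraph correctly isolates the $\lambda$-versus-$\omega$ balancing as the point where the gap $r<p$ is actually bridged.
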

In Fonseca-M\"uller \cite{FM}, it is required that $\Abb u_j \to \Abb u$ in the space $\WSob^{-k}_p(\Omega;\IR^m)$, i.e. the space with the natural growth exponent, while here $r$ is smaller than $p$.
There are two possible ways to prove Theorem~\ref{FM:generalised}. 
In either method, one first reduces to $p$-equi-integrable sequences $u_j$. After that,
\begin{itemize}
	\item one can perform a Lipschitz truncation directly on the sequence $\{u_j\}_{j\in\IN}$ to obtain functions $u_j^{\lambda}$ bounded in $L_{\infty}$ and project them onto the space of $\A$-free functions again (cf. \cite[Proposition 6.1]{BGS}). 
	One may then use the standard version of the lower semicontinuity result. 
	For the original sequence $u_j$, lower semicontinuity is obtained by carefully estimating the error that appears due to truncation and projection;
	\item one can use a homogeneous analogue of Lemma~\ref{lem:DiffOpVsEquiInt} to show that for a $p$-equi-integrable sequence the convergence, which is a priori only weak in $\WSob^{-k}_p$ and strong in $\WSob^{-k}_r$, $r<p$, is in fact strong in $\WSob^{-k}_{p}$ as well.
	We therefore reduce to the classical case. 
	Observe that for such an argument we implicitly also need Lipschitz truncation as in Lemma \ref{lem:DiffOpVsEquiInt} to show strong convergence in $\WSob^{-k}_{p}$.
\end{itemize}


\subsection{General exponents and a generalisation of Theorem~\ref{thm:lsc_suff}}\label{sec:4_wlsc:2_generalExponents}
In this section we prove a generalisation of Theorem~\ref{thm:lsc_suff} that is also suited to our later application to the non-Newtonian Navier--Stokes equations.

\begin{theorem}\label{thm:anisotropicSuff}
	Let $p,q,r\in (1,\infty)$ so that $1<r \le \min\{p,q\}$. 
	Assume 
	\begin{equation*}
		\begin{cases}
			\hfill   w_j\wconv[\quad] w \hfill
			&   \text{in } 
			(\leb p \times \leb q) (\mathcal T\times \Omega;\IR^m\times\IR^m ),
			\\\hfill  \A w_j\conv[\quad] \A w \hfill
			&   \text{in } 
			(\WSob^{-1,-2k}_{r,r}
			\times \sobW{0,-k'}{r})(\mathcal T\times \Omega;\IR^m\times\IR^n ),
		\end{cases}
	\end{equation*}
	and that $f$ is $\A$-quasiconvex and permits the growth condition
	\begin{equation*}
		0\le 
        f(\hat \epsilon,\hat \sigma) 
		\leq C_1\,(1+ \vert \hat \epsilon \vert^p + \vert \hat \sigma \vert^q)
		,\quad (\hat \epsilon,\hat \sigma) \in \IR^m\times\IR^m.
	\end{equation*}

	Then, we have
	\begin{equation*}
		\int\limits_{\mathcal{T}\times \Omega} 
		f(w)
		\dx[(t,x)]
		\le 
		\liminf_{j\to\infty}
		\int\limits_{\mathcal{T}\times \Omega} 
		f(w_j)
		\dx[(t,x)].
	\end{equation*}
\end{theorem}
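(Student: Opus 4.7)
The plan is to follow the blow-up strategy of Fonseca--M\"uller \cite{FM}, adapted to the doubly anisotropic setting. The main additional difficulty is that the hypothesis on $\A w_j$ is placed in the weaker space $(\WSob^{-1,-2k}_{r,r}\times \WSob^{0,-k'}_r)$ rather than in the natural scale $(\WSob^{-1,-2k}_{p,q}\times \WSob^{0,-k'}_p)$; since $p\neq q$ in general, the compact embedding of Lemma~\ref{lem:compactembedding} does not bridge this gap on its own.

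First, I would invoke a standard biting/decomposition argument (applied component-wise) to extract a $(p,q)$-equi-integrable modification $\widetilde w_j$ of $w_j$ with the same weak limit, $\lebm^{d+1}(\{w_j\neq \widetilde w_j\})\to 0$, and $\A \widetilde w_j$ still converging strongly to $\A w$ in the weak space. Since $f\ge 0$, proving the inequality for $\widetilde w_j$ implies the corresponding inequality for $w_j$ (localized on the \emph{good} set and then exhausting the domain). With $(p,q)$-equi-integrability in hand, Lemma~\ref{lem:DiffOpVsEquiInt} upgrades the convergence to $\A \widetilde w_j\to \A w$ strongly in $(\WSob^{-1,-2k}_{p,q}\times \WSob^{0,-k'}_p)$, i.e.\ in the natural scale of the operator.

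Next, I would localize by blow-up: for $\lebm^{d+1}$-a.e.\ point $(t_0,x_0)\in \mathcal T\times\Omega$, Lebesgue differentiation combined with the parabolic rescaling $(t,x)\mapsto (\rho^{2k}t,\rho x)$ --- which preserves $\A$-freeness by Lemma~\ref{lem:studyKernel}~\ref{it:studyKernel:scaling} --- reduces the problem to a constant limit $w\equiv(\hat\epsilon,\hat\sigma)$ on a small parabolic cube $Q$. Multiplying $v_j\coloneqq \widetilde w_j -(\hat\epsilon,\hat\sigma)$ by a smooth cutoff and extending periodically yields a sequence on $\IT_{d+1}$ with $v_j\weakto 0$ in $(\leb p\times \leb q)(\IT_{d+1})$ and $\A v_j\to 0$ in $(\WSob^{-1,-2k}_{p,q}\times \WSob^{0,-k'}_p)(\IT_{d+1})$. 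Applying the non-linear parabolic projection $\widetilde \Pi_\A$ from Lemma~\ref{lem:AcorrectionOperator} and subtracting the mean, one obtains $\bar v_j$ which is exactly $\A$-free, has vanishing average, and satisfies $\|v_j-\bar v_j\|_{\leb p\times \leb q}\to 0$ by part~\ref{it:AcoorectionOperator:estimate} of that lemma.

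Since $\bar v_j$ is $\A$-free and mean-zero on $\IT_{d+1}$, Definition~\ref{def:quasiconvexity} directly gives
\[
    f(\hat\epsilon,\hat\sigma) \le \int_{\IT_{d+1}} f\bigl((\hat\epsilon,\hat\sigma)+\bar v_j(t,x)\bigr)\dx[(t,x)].
\]
The local Lipschitz bound of Lemma~\ref{lemma:base}~\ref{it:base:4_locallyLipschitz}, combined with H\"older's inequality and the strong convergence $\bar v_j\to v_j$ in $(\leb p\times\leb q)$, then allows us to replace $\bar v_j$ by $v_j$ in the limit. A Vitali-type covering argument on $\mathcal T\times\Omega$ assembles the cube-wise inequalities into the global bound $\int f(w)\le \liminf_j \int f(\widetilde w_j)\le \liminf_j \int f(w_j)$. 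The main obstacle, and the reason the homogeneous argument of \cite{FM} cannot be transplanted verbatim, lies in this initial upgrade of the constraint: in the doubly anisotropic regime with $p\neq q$ the usual compactness trick fails, and one must rely on the parabolic Lipschitz truncation encoded in Lemma~\ref{lem:DiffOpVsEquiInt} to trade $(p,q)$-equi-integrability for strong convergence of $\A \widetilde w_j$.
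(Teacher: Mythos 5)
Your overall strategy (equi-integrable modification, blow-up, cutoff, projection, Jensen inequality) is the right one and much of the outline is sound, but the \emph{order} in which you apply Lemma~\ref{lem:DiffOpVsEquiInt} versus the cutoff-and-periodisation step contains a genuine gap. You invoke Lemma~\ref{lem:DiffOpVsEquiInt} on the domain $\mathcal T\times\Omega$ before you cut off and extend periodically; first, that lemma is proved (by Fourier methods) only on $\IT_{d+1}$, so applying it on $\mathcal T\times\Omega$ is not licensed. More seriously, even granting that, your claim that ``multiplying by a smooth cutoff and extending periodically yields $\A v_j\to 0$ in $(\WSob^{-1,-2k}_{p,q}\times\WSob^{0,-k'}_p)(\IT_{d+1})$'' hides the crucial commutator analysis: writing $\A_1(\eta_\eps w_j)=\eta_\eps \A_1 w_j + R_j$, the remainder $R_j$ (terms like $\dell_t\eta_\eps\,\epsilon_j$ and $\nabla_x^i\eta_\eps\otimes\nabla_x^{2k-i}\sigma_j$) only converges \emph{weakly} in $(\leb p+\WSob^{0,-2k+1}_q)(\IT_{d+1})$, and upgrading that to strong convergence in $\WSob^{-1,-2k}_{p,q}(\IT_{d+1})$ would require a compact embedding of the form $\leb p\hookrightarrow\hookrightarrow \WSob^{-1,-2k}_{p,q}$, which is exactly the embedding that the paper points out \emph{fails} for $p\neq q$ (Remark~\ref{rem:compactembedding}~(ii)). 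The same applies to the lower-order terms of $\Pd^\ast(\eta_\eps\epsilon_j)$, which additionally require a splitting using $\Phi^{\mathrm{t}}_{2k},\Phi^{\mathrm{x}}_{2k}$ and the first component of the operator to land in $\WSob^{0,-k'}_r$ rather than in a $\WSob^{-1,0}_r+\WSob^{0,-k'}_r$-type space.

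The way around this is to flip the order of your steps: do the cutoff and periodisation entirely in the $\leb r$-based scale, where Lemma~\ref{lem:compactembedding} is available to kill the commutator remainders (this is the paper's Step 3, using the identification $\WSob^{-1,-2k}_{r,r}\simeq\WSob^{-1,0}_r+\WSob^{0,-2k}_r$ and the operators $\Phi^{\mathrm t}_{2k},\Phi^{\mathrm x}_{2k}$). Only once you are on $\IT_{d+1}$ with an $\A$-constraint converging in the $r$-scale \emph{and} a $(p,q)$-equi-integrable sequence (the cutoff preserves equi-integrability) may you invoke Lemma~\ref{lem:DiffOpVsEquiInt} to upgrade to the natural $(\WSob^{-1,-2k}_{p,q}\times\WSob^{0,-k'}_p)$-scale and then use $\widetilde\Pi_\A$ and $\A$-quasiconvexity. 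Your final application of Lemma~\ref{lem:AcorrectionOperator} and the locally Lipschitz estimate from Lemma~\ref{lemma:base}~\ref{it:base:4_locallyLipschitz} is fine once the hypotheses of Lemma~\ref{lem:DiffOpVsEquiInt} are correctly in hand.
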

\begin{proof}
	We subdivide the proof into four main steps:
	\begin{enumerate}[itemindent=1cm, label=\textit{Step \arabic*}]
		\item \label{pf:anisotropicSuff:1_equiInt}
		Replace $\{w_j\}_{j\in\IN}$ with  $(p,q)$-equi-integrable sequence;
		\item \label{pf:anisotropicSuff:2_transferTrous}
		Reduce to the case $\mathcal{T}\times \Omega = (0,1)^{d+1}$ and $w=0$;
		\item \label{pf:anisotropicSuff:3_convDiffEq}
		Reduce to the case where the domain is a torus in space-time;
		\item \label{pf:anisotropicSuff:4_tructation}
		Upgrade the convergence and use $\A$-quasiconvexity.
	\end{enumerate}
	The expert reader might already be familiar with \ref{pf:anisotropicSuff:1_equiInt}--\ref{pf:anisotropicSuff:3_convDiffEq} in the homogeneous setup, and apart from the 'hard part' achieved in Lemma~\ref{lem:DiffOpVsEquiInt}, the last step is a standard application of the definition of $\A$-quasiconvexity.	
	
	\noindent \ref{pf:anisotropicSuff:1_equiInt}:
	We first reduce to the case where $\{w_j=(\epsilon_j,\sigma_j)\}_{j\in \IN}$ is $(p,q)$-equi-integrable. 
	That is, we want
	\begin{equation*}
		\lim_{\delta\to 0} \sup_{\substack{U\subset \mathcal T\times \Omega,\\ \LL^{d+1}(U)\le \delta}}
		\sup_{j\in\IN}
		\int_{U} \abs{\epsilon_{j}}^p + \abs{\sigma_j}^q\dx[(t,x)] =0.
	\end{equation*}
	Set $v_j\coloneqq w_j-w$. 
	Clearly $v_j\wconv 0$ in $ (\leb p\times \leb q )(\mathcal T\times \Omega)$.
	According to \cite[Lemma 1.2]{FMP} resp. \cite[Lemma 2.15]{FM}, there exists a sequence $\{\widetilde v_j\}_{j\in \IN}\subset  (\leb p\times \leb q )(\mathcal T\times \Omega)$ which 
	\begin{enumerate}[label =(\roman*),itemindent=1em]
		\item is $(p,q)$-equi-integrable;
		\item satisfies $\widetilde v_j-v_j\conv 0$ in $L_r(\mathcal T\times \Omega;\IR^m\times\IR^m)$;
		\item satisfies $\widetilde v_j\wconv 0$ in $( \leb p\times \leb q )(\mathcal T\times \Omega;\IR^m \times \R^m)$.
	\end{enumerate}
	Set $\widetilde w_j\coloneqq \widetilde v_j + w$.
	From the above it follows that
	\begin{equation*}
		\begin{cases}
			\hfill \{\widetilde w_j\}_{j\in \IN} \hfill
			&   \text{is $(p,q)$-equi-integrable},
			\\ \hfill \widetilde w_j\wconv[\quad] w \hfill
			&   \text{in } (\leb p\times \leb q)(\mathcal T\times \Omega; \R^m \times \R^m),\text{ and }
			\\ \hfill \A \widetilde w_j\conv[\quad] \A w \hfill
			&   \text{in } (\WSob^{-1,-2k}_{r,r}\times \sobW{0,-k'}{r})(\mathcal T\times \Omega;\R^n \times \R^n).
		\end{cases}
	\end{equation*}
	
	We will now prove that the sequence $\{\widetilde w_j\}_{j\in \IN}$ does not increase the liminf:
	\begin{equation*}
		\liminf_{j\to\infty}
		\int_{\mathcal{T}\times \Omega} 
		f(\widetilde w_j)
		\dx[(t,x)]
		\le 
		\liminf_{j\to\infty}
		\int_{\mathcal{T}\times \Omega} 
		f(w_j)
		\dx[(t,x)].
	\end{equation*}
	Let $\lambda\in \IN$ be a cut-off parameter and define
	\begin{equation*}
		U^\lambda_j
		\coloneqq  
		\{
		(t,x)\in \mathcal T\times \Omega
		:   \abs{\widetilde w_j(t,x)}\le\lambda, 
		\abs{w_j(t,x)}\le\lambda
		\}.    
	\end{equation*}
	On the one hand, continuity and $(p,q)$-growth of $f$, together with $\widetilde w_j-w_j\conv 0$ in measure imply that for each $\lambda\in\IN$
	\begin{equation*}
		\limsup_{j\to\infty}
		\abs{
			\int_{U^\lambda_j} 
			f(\widetilde w_j)-f(w_j)
			\dx[(t,x)]}=0.
	\end{equation*}
	On the other hand, as $\sup_{j\in\IN}\LL^{d+1}\left ( (\mathcal{T}\times \Omega )\setminus U_j^{\lambda}\right )\conv 0$ for $\lambda \to \infty$, $(p,q)$-equi-integrability of $\{\widetilde w_j\}_{j\in \IN}$ yields
	\begin{equation*}
		\lim_{\lambda \to\infty} \sup_{j\in\IN}
		\int_{(\mathcal{T}\times \Omega)\setminus U_j^\lambda} f(\widetilde w_j)\dx[(t,x)] =0.
	\end{equation*}
	Hence, using $f\ge 0$, we conclude
	\begin{align*}
		\liminf_{j\to \infty}
		\int_{\mathcal{T}\times \Omega} 
		f(\widetilde w_j)
		\dx[(t,x)]
		=&   \lim_{\lambda\to\infty}
		\liminf_{j\to \infty}
		\int_{U_j^\lambda} 
		f(\widetilde w_j)
		\dx[(t,x)]
		\\  =&   \lim_{\lambda\to\infty}
		\liminf_{j\to \infty}
		\int_{U_j^\lambda} 
		f(w_j)
		\dx[(t,x)]
		\\  \le& \liminf_{j\to \infty}
		\int_{\mathcal{T}\times \Omega} 
		f(w_j)
		\dx[(t,x)].
	\end{align*}
	\ref{pf:anisotropicSuff:2_transferTrous}: We may reduce to the case where $\mathcal{T} \times \Omega =(0,1)^{d+1}$ in the standard fashion (see for instance the original work \cite{Morrey} or \cite{AF84}): We approximate the target function $w $ by functions that are constant on small cubes of parabolic scaling; let us call these $w^{\varepsilon}$. 
	Then we may consider the alternative sequence
	\[
	w_j^{\varepsilon} = w_j + (w^{\varepsilon} -w),
	\]
	show the statement for this sequence, and let $\varepsilon \to 0$ afterwards. 
	The lower semicontinuity statement on the whole domain then reduces to proving result on each parabolic cube. 
	By rescaling we may reduce to $(0,1)^{d+1}$.\\
	\ref{pf:anisotropicSuff:3_convDiffEq}: 
	Suppose now that we have a sequence of functions that converges weakly to a constant function $w_0$ on the cube and satisfies,
	\begin{equation*}
		\begin{cases}
			\hfill \{ w_j\}_{j\in \IN} 
			&   \text{is $(p,q)$-equi-integrable},
			\\ \hfill  w_j\wconv[\quad] w_0 
			&   \text{in } (\leb p\times \leb q )((0,1)^{d+1}), \text{ and }
			\\ \hfill \A w_j \conv[\quad] 0 
			&   \text{in } (\WSob^{-1,-2k}_{r,r}\times \sobW{0,-k'}{r})((0,1)^{d+1}).
		\end{cases}
	\end{equation*}
	We may suppose that $w_0 =0$ and aim to show that 
	\begin{equation} \label{eq:pf:3:claim}
		f(0) \leq \liminf_{j \to \infty} \int_{(0,1)^{d+1}} f(w_j) \dx[(t,x)].
	\end{equation}
	If we can reduce this statement to the same statement, where $(0,1)^{d+1}$ is replaced by $\IT_{d+1}$, we may continue with \ref{pf:anisotropicSuff:4_tructation}. 
	The difficulty lies in the fact that $\A w_j$ does not necessarily converge to $0$ in $(\WSob^{-1,-2k}_{r,r}\times \sobW{0,-k'}{r})(\IT_{d+1})$ (that is, taking test functions with \emph{periodic} instead of zero boundary conditions on the cube).
	
	We now may argue as in \cite[Lemma 2.15]{FM}: 
	For $\varepsilon>0$ consider a cut-off function $\eta_{\varepsilon}$ such that 
	\begin{enumerate}[label=(\roman*)]
		\item $\eta_{\varepsilon} \in C_c^{\infty}((0,1)^{d+1})$;
		\item $\eta_{\varepsilon} =1$ \text{on } $(\varepsilon,1-\varepsilon)^{d+1}$;
		\item $\Vert D^k \eta_{\varepsilon} \Vert_{L_{\infty}} \leq C_k \,\varepsilon^{-k}$ for any $k \in \N$.
	\end{enumerate}
	We claim that there is a sequence $\varepsilon_j \to 0$ such that for $\widetilde{w}_j := \eta_{\varepsilon_j} w_j $
	\begin{equation} \label{pf:3:1}
		\liminf_{j \to \infty} \int_{(0,1)^{d+1}} f(\widetilde{w}_j) \dx[(t,x)] = \liminf_{j \to \infty} \int_{(0,1)^{d+1}} f(w_j) \dx[(t,x)]
	\end{equation}
	\emph{and}
	\begin{equation}\label{pf:3:2}
		\A w_j \conv 0 \quad \text{in } (\WSob^{-1,-2k}_{r,r}\times \sobW{0,-k'}{r})(\IT_{d+1}).
	\end{equation}
	If so, Equation \eqref{eq:pf:3:claim} indeed reduces to the statement on the torus and we are finished.
	
	Equation \eqref{pf:3:1} follows routinely by observing that, due the growth condition,
	\begin{align*}
	    &\limsup_{j \to \infty} \int_{(0,1)^{d+1}} \vert f(\widetilde{w}_j) -f(w_j)\vert \dx[(t,x)]
		\leq 
		\\ &\limsup_{j \to \infty} \int_{(0,1)^{d+1} \setminus (\varepsilon_j,(1-\varepsilon_j))^{d+1}} C\,(1+ \vert \epsilon_j \vert^p + \vert\sigma_j \vert^q) \dx[(t,x)] =0,
	\end{align*}
	as $w_j=(\epsilon_j,\sigma_j)$ is $(p,q)$-equi-integrable and $\varepsilon_j \to 0$. 
	
	It remains to construct a sequence $\varepsilon_j$ such that $\A \widetilde{w}_j \to 0$.
	To this end, we look at both equations entailing $\A$ separately. 
	Recall from Lemma~\ref{lem:identifyEquationSpace} that we can identify $ \WSob^{-1,-2k}_{r,r}(\IT_{d+1})\simeq (\WSob^{-1,0}_r+ \WSob^{0,-2k}_r)(\IT_{d+1}) $, which we will use throughout this step.
	
	First,
	\begin{equation} \label{first}
		\partial_t (\eta_{\varepsilon}\,  \epsilon_j) - \Qd^{\ast} \Qd (\eta_{\varepsilon}\, \sigma_j) 
		= \eta_{\varepsilon} \left( \partial_t \epsilon_j - \Qd^{\ast} \Qd \sigma_j \right) + R_j,
	\end{equation}
	where $R_j$ denotes all the remainder terms in which at least one derivative acts on $\eta_{\varepsilon}$. 
	Observe that, for fixed $\eps>0$,
	\[
	\eta_\eps \left( \partial_t \epsilon_j - \Qd^{\ast} \Qd \sigma_j \right) \longrightarrow 0 \quad
	\text{in } (\WSob^{-1,0}_r+ \WSob^{0,-2k}_r)(\IT_{d+1}) \quad \text{as } j \to \infty.
	\]
	On the other hand, the remainder term can be decomposed into 
	\begin{equation}\label{eq:proof_suff:step_3:defR}
		R_j= \partial_t \eta_{\varepsilon}\, \epsilon_j + \sum_{i=1}^{2k} Q_i(\nabla^i_x \eta_{\varepsilon} \otimes \nabla^{2k-i}_x \sigma_j)
	\end{equation}
	for suitable linear maps $Q_i$ coming out of $\Qd^{\ast} \Qd$.
	As $\epsilon_j \weakto \epsilon$ in $L_r(\IT_{d+1})$ and $\sigma_j \weakto \sigma$ in $L_r(\IT_{d+1})$, we obtain that, for fixed $\varepsilon>0$,
	\[
	R_j \wconv 0 \quad \text{in } (L_r + \WSob^{0,-2k+1}_r)(\IT_{d+1})
	\]
	Applying the compact Sobolev embedding of Lemma~\ref{lem:compactembedding} one obtains
	\[
	R_j \conv 0 \quad \text{in }( \WSob^{-1,0}_r+ \WSob^{0,-2k}_r)(\IT_{d+1}).
	\]
	This finishes the argument that the term in \eqref{first} converges, i.e.
	\begin{equation}\label{eq:proof_suff:convergence_first}
		\bigl(\partial_t (\eta_{\varepsilon}\,  \epsilon_j) - \Qd^{\ast} \Qd (\eta_{\varepsilon}\, \sigma_j) \bigr) \longrightarrow 0 \quad \text{in }( \WSob^{-1,0}_r+ \WSob^{0,-2k}_r)(\IT_{d+1}).
	\end{equation}
	By a diagonal-sequence argument we can select a sequence $\varepsilon_j \to 0$ such that \eqref{first} converges strongly in $(\WSob^{-1,0}_r + \WSob^{0,-2k}_r)(\IT_{d+1})$.
	
	For the second component of $\A (\epsilon,\sigma)$, we can construct a similar decomposition
	\begin{equation}\label{eq:proof_suff:step_3:2ndEqn}
		\Pd^{\ast} (\eta_{\varepsilon}\,  \epsilon_j) 
		= \eta_{\varepsilon}\,  \Pd^{\ast} \epsilon_j 
		+ \widetilde S_j,
	\end{equation}
	where $\widetilde S_j$ consists of lower-order terms of the form $\dell_x^{\alpha-\beta}\eta_\eps\,\dell_x^\beta\epsilon_j$, with $\abs{\alpha}=k'$ and $\beta <\alpha$, $\alpha,\beta\in \IN^{d}$.    
	In particular, we get for fixed $\eps>0$
	\[
	\eta_{\varepsilon}\, \Pd^{\ast} \epsilon_j \conv 0  \quad \text{in } \WSob^{0,-k'}_r(\IT_{d+1}) 
	\quad \text{and} \quad 
	\widetilde S_j \wconv 0 \quad \text{in } \WSob_r^{0,-k'+1}(\IT_{d+1}).
	\]
	If we were to use the same argument as before, we would only be able to assert
	\begin{equation*}
		\widetilde S_j\conv 0 \txtin (\sobW{-1,0}{r}+\sobW{0,-k'}{r})(\IT_{d+1}).
	\end{equation*} 
	To get rid of the temporal part of the sum, we instead use the first component of $\A$:
	Taking the operators $\Phi_{2k}^\mathrm{t}$ and $\Phi_{2k}^\mathrm{x}$ from Lemma~\ref{lem:spacetimeConverter}, we refine the decomposition in \eqref{eq:proof_suff:step_3:2ndEqn} into
	\begin{equation}\label{eq:proof_suff:step_3:defST}
		\begin{split}
			\Pd^{\ast} (\eta_{\varepsilon}\,  \epsilon_j) 
			&= \Phi_{{2k}}^{\mathrm{t}} \Pd^{\ast} (\eta_{\varepsilon}\,  \epsilon_j)
			+ \Phi_{2k}^\mathrm{x}\Pd^{\ast} (\eta_{\varepsilon}\,  \epsilon_j)\\
			&= \left (\Phi_{{2k}}^{\mathrm{t}} (\eta_\eps\, \Pd^\ast\epsilon_j)
			+ \Phi_{{2k}}^{\mathrm{t}} \widetilde S_j\right )
			+ \dell_t^{-1} \Phi_{2k}^\mathrm{x} \Pd^\ast\Bigl(\dell_t (\eta_{\varepsilon}\,  \epsilon_j)-\Qd^\ast\Qd(\eta_\eps\, \sigma_j) \Bigr)
			\\	&\eqqcolon \Phi_{{2k}}^{\mathrm{t}} (\eta_\eps\,\Pd ^\ast\epsilon_j)
			+ S_j
			+ T_j,
		\end{split}
	\end{equation}
	where we also used that $\Pd^\ast\Qd^\ast\Qd(\eta_\eps\, \sigma_j)=0$. \\
	The first summand $ \Phi_{{2k}}^{\mathrm{t}} (\eta_\eps\,\Pd ^\ast\epsilon_j) $ converges in $\sobW{0,-k'}{r}(\IT_{d+1})$ by assumption.\\
	For $S_j$, Lemma~\ref{lem:FM} and Lemma~\ref{lem:spacetimeConverter} imply $\Phi_{{2k}}^\mathrm{t}\colon \sobW{0,-k'+1}{r}(\IT_{d+1})\cptmap \sobW{0,-k'}{r}(\IT_{d+1})$, so that for fixed $\eps>0$ we have 
	\begin{equation*}
		S_j\conv 0 \quad \txtin \sobW{0,-k'}{r}(\IT_{d+1}).
	\end{equation*}
	For $T_j$, since $\dell_t^{-1}\circ\Phi_{{2k}}^\mathrm{x}\circ \Pd^\ast $ defines a continuous map $ \sobW{-1,0}{r}+ \sobW{0,-2k}{r}\to \sobW{0,-k'}{r}$, \eqref{eq:proof_suff:convergence_first} yields
	\begin{equation*}
		T_j\conv 0 \quad \txtin \sobW{0,-k'}{r}(\IT_{d+1}).
	\end{equation*}
	This allows us to adjust the sequence $\eps_j\conv 0$ to ensure 
	\begin{equation*}
		\Pd^{\ast} (\eta_{\varepsilon_j}\,  \epsilon_j) \conv 0 \quad \txtin \sobW{0,-k'}{r}(\IT_{d+1})\quad \txt{as} j\to \infty.
	\end{equation*}
	This finishes the proof of \ref{pf:anisotropicSuff:3_convDiffEq}.
	
	\noindent\ref{pf:anisotropicSuff:4_tructation}:
	Suppose that
	\begin{equation*}
		\begin{cases}
			\hfill \{ w_j\}_{j\in \IN} 
			&   \text{is $(p,q)$-equi-integrable},
			\\ \hfill  w_j\wconv[\quad] w_0 
			&   \text{in } (\leb p\times \leb q )(\IT_{d+1}), \text{ and }
			\\ \hfill \A w_j \conv[\quad] 0 
			&   \text{in } (\WSob^{-1,-2k}_{r,r}\times \sobW{0,-k'}{r})(\IT_{d+1}).
		\end{cases}
	\end{equation*}
	Assuming that $f$ is $\A$-quasiconvex and obeys the $(p,q)$-growth bound,  we claim that
	\begin{equation} \label{eq:claim}
		f(w_0) \leq \liminf_{j \to \infty} \int_{\IT_{d+1}} f(w_j) \dx[(t,x)].
	\end{equation} 
	Without loss of generality, we may assume $ w_0=0 $.
	First, note that due to $(p,q)$-equi-integrability of the sequence, Lemma~\ref{lem:DiffOpVsEquiInt} implies that the convergence of $\A w_j$ happens in $(\WSob^{-1,-2k}_{p,q} \times \WSob^{0,-k'}_p)(\IT_{d+1})$.
	As discussed in Lemma~\ref{lem:AcorrectionOperator}, we can use the non-linear projection onto $ \ker \A $ to define
	\begin{equation*}
		\bar w_j\coloneqq \widetilde \Pi_\A w_j.
	\end{equation*}
	Then $\A \bar w_j=0$ and 
	\begin{equation*}
		w_j-\bar w_j\conv 0\quad\txtin (\leb p\times\leb q )(\IT_{d+1}).
	\end{equation*}
	Consequently, with $\A$-quasiconvexity and Lemma~\ref{lemma:base} \ref{it:base:4_locallyLipschitz}, we get
	\begin{equation*}
		\begin{split}
			f(0)
			&\le
			\liminf_{j\to\infty} f\left (\int_{\IT_{d+1}}\bar w_j\dx[(t,x)]\right ) 
			\\	&\le  \liminf_{j\to\infty} \int_{\IT_{d+1}}f(\bar w_j)\dx[(t,x)] 
			\\  &=  \liminf_{j\to\infty} \int_{\IT_{d+1}}f( w_j)\dx[(t,x)].
		\end{split}
	\end{equation*}
	This concludes the proof.
\end{proof}


\subsection{Relaxation}\label{sec:4_wlsc:3_relax}
If $f$ fails to be $\A$-quasiconvex, the functional $I$ will not be lower semicontinuous (see Theorem \ref{thm:lsc_nec}).
For this reason we seek to replace $f$ with its \emph{$\A$-quasiconvex envelope}, which, adapting \cite{FM,BFL}, is defined to be
\begin{equation}\label{eq:qcvxEnvelope}
   \begin{split}
   	 \QA f(\hat\epsilon,\hat\sigma)
    \coloneqq
    \inf\Big\{
    \int_{\IT_{d+1}} f(\hat\epsilon+\epsilon,\hat\sigma+\sigma)\dx[(t,x)]
    :
    \A(\epsilon,\sigma)=0, 
    &
    \\      
    \epsilon,\sigma\in C^\infty_{\zeromean}(\IT_{d+1};\IR_{\sym,0}^{d\times d})
    &
    \Big\},
    \quad\hat\epsilon,\hat\sigma\in \IR_{\sym,0}^{d\times d}.
   \end{split}
\end{equation}

We can then prove a suitable \emph{relaxation} statement in the same vein as the homogeneous case \cite{BFL}:
\begin{theorem}[Relaxation] \label{thm:relaxation}
    Let $p,q\in(1,\infty)$.
    Suppose that $f \in C(\IR^m \times \IR^m;[0,\infty))$ obeys the growth condition
    \[
    f(\epsilon,\sigma) \leq C\, (1+ \vert \epsilon \vert^p+\vert \sigma \vert^q).
    \]
    Then, for any $\varepsilon>0$ and any $(\epsilon,\sigma) \in (L_p \times L_q)(\mathcal{T} \times \Omega;\IR^m \times \IR^m)$ there exists a  bounded sequence $\{(\epsilon_j,\sigma_j)\}_{{j\in\IN}} \subset (L_p \times L_q)(\mathcal{T} \times \Omega;\IR^m \times \IR^m)$ such that
    \begin{enumerate} [label=(\roman*)]
        \item $(\epsilon_j,\sigma_j) \weakto (\epsilon,\sigma)$ in $L_p \times L_q$;
        \item $\A(\epsilon_j,\sigma_j) \longto \A(\epsilon,\sigma)$ in $\WSob^{-1,-2k}_{p,q} \times \WSob^{0,-k'}_p$; \label{item2}
        \item we have
        \[
        \liminf_{j \to \infty}\int_{\mathcal{T} \times \Omega} f(\epsilon_j,\sigma_j) \dx[(t,x)]
        \le \int_{\mathcal{T} \times \Omega} \QA f(\epsilon,\sigma) \dx[(t,x)] + \varepsilon.
        \]
    \end{enumerate}
\end{theorem}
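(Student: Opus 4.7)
The argument follows the classical $\A$-quasiconvex relaxation template (cf.~\cite{FM,BFL}), adapted to the anisotropic parabolic setting. The plan decomposes into three stages: reduction of the target $w=(\epsilon,\sigma)$ to a piecewise constant function on a partition of parabolic cubes; construction of a local recovery sequence from the definition \eqref{eq:qcvxEnvelope} of $\QA f$ on each cube; and a gluing via anisotropic cut-offs followed by a diagonal extraction.

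First, since $0\le\QA f\le f$ inherits the $(p,q)$-growth and is $\A$-quasiconvex by construction, Lemma~\ref{lemma:base}\ref{it:base:4_locallyLipschitz} shows that $\QA f$ is locally Lipschitz, hence continuous; combined with dominated convergence this lets me approximate $w$ in $L_p\times L_q$ by a piecewise constant $\bar w$ on a partition $\{Q_i\}$ of $\mathcal T\times \Omega$ into parabolic cubes of temporal side $h^{2k}$ and spatial side $h$, with $\int\QA f(\bar w)\le\int\QA f(w)+\varepsilon/3$. On each $Q_i$ where $\bar w\equiv\hat w_i$ I select test functions $(\phi_i,\psi_i)\in C_{\#}^\infty(\IT_{d+1};\IR^m\times\IR^m)$ with $\A(\phi_i,\psi_i)=0$ and
\begin{equation*}
    \int_{\IT_{d+1}} f(\hat w_i+(\phi_i,\psi_i))\dx[(t,x)] \le \QA f(\hat w_i)+\frac{\varepsilon}{3\,|\mathcal T\times\Omega|}.
\end{equation*}
By the parabolic scaling of Lemma~\ref{lem:studyKernel}\ref{it:studyKernel:scaling}, the rescalings $\chi_{i,j}(t,x)\coloneqq (\phi_i,\psi_i)(j^{2k}(t-t_i)/h^{2k},\,j(x-x_i)/h)$ are still $\A$-free on $Q_i$ and weakly converge to $0$ as $j\to\infty$. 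With smooth anisotropic cut-offs $\eta_i^\delta\in C_c^\infty(Q_i)$ equal to $1$ off an $O(\delta)$-strip around $\partial Q_i$, I set
\begin{equation*}
    w_j^\delta \coloneqq \bar w+\sum_i \eta_i^\delta\,\chi_{i,j}.
\end{equation*}

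Verifying (i) is immediate from rapid oscillation (Riemann--Lebesgue). For (iii), the bulk $\{\eta_i^\delta=1\}$ contributes, after change of variables, $\sum_i|Q_i|\bigl(\QA f(\hat w_i)+\varepsilon/(3\,|\mathcal T\times\Omega|)\bigr)\le\int\QA f(\bar w)+\varepsilon/3$, while the cut-off strip of total measure $O(\delta)$ contributes $O(\delta)$ via the $(p,q)$-growth of $f$ and the a priori $L_\infty$-bounds on $\chi_{i,j}$ that follow from $(\phi_i,\psi_i)\in C^\infty(\IT_{d+1})$. The main obstacle is (ii): writing $\A(w_j^\delta-\bar w)=\sum_i \A(\eta_i^\delta\chi_{i,j})$ and using $\A$-freeness of $\chi_{i,j}$, the remaining contributions are commutators with at least one derivative on $\eta_i^\delta$, structurally identical to \eqref{eq:proof_suff:step_3:defR} and \eqref{eq:proof_suff:step_3:defST} in Step~3 of the proof of Theorem~\ref{thm:anisotropicSuff}. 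These are handled exactly as there: the first component via the compact embedding $L_p\cptmap \WSob^{-1,0}_p+\WSob^{0,-2k}_p$ of Lemma~\ref{lem:compactembedding} applied to $\chi_{i,j}\weakto 0$, and the second component via the splitting of Lemma~\ref{lem:spacetimeConverter}, which reroutes its temporal part through the first equation so that the remainder lands in $\WSob^{0,-k'}_p$. A final diagonal extraction in $\delta\to 0$ and $j\to\infty$ produces the required sequence.
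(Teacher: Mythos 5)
Your overall strategy — piecewise constant approximation on parabolic cubes, local oscillating recovery from the definition of $\QA f$, and gluing via cut-offs controlled by the commutator machinery of Theorem~\ref{thm:anisotropicSuff} — matches the paper's sketch. There are, however, two concrete gaps in the verification of (i) and (ii).

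First, your recovery sequence $w_j^\delta = \bar w + \sum_i \eta_i^\delta\,\chi_{i,j}$ converges weakly to $\bar w$, not to the target $(\epsilon,\sigma)$: the diagonal you propose is only in $\delta$ and $j$, while the partition (hence $\bar w$) stays fixed, so ``(i) is immediate'' is not true as stated. Either one must send the cube size to zero simultaneously — which is precisely why the paper works with the truncated envelope $\QA^R f$, since that keeps the local recovery functions uniformly $L^\infty$-bounded while the number of cubes grows — or one should add $(\epsilon,\sigma) - \bar w$ back to the sequence, setting $w_j^\delta = (\epsilon,\sigma) + \sum_i\eta_i^\delta\chi_{i,j}$, and control the resulting error in (iii) via the local Lipschitz estimate of Lemma~\ref{lemma:base}\ref{it:base:4_locallyLipschitz}, choosing $h$ small enough. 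Both repairs are available but neither is carried out in your argument.

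Second, the compact embedding $\leb p \cptmap \WSob^{-1,0}_p + \WSob^{0,-2k}_p$ from Lemma~\ref{lem:compactembedding} delivers strong convergence of the first-component commutator in $\WSob^{-1,-2k}_{p,p}$ (via Lemma~\ref{lem:identifyEquationSpace}\ref{it:identifyEquationSpace:rCase}), not in the target space $\WSob^{-1,-2k}_{p,q}$. By Lemma~\ref{lem:identifyEquationSpace}\ref{it:identifyEquationSpace:pqCase} and the bounded-domain inclusion $\leb p \supset \leb q$ for $p < q$, one has $\WSob^{-1,-2k}_{p,q} \embed \WSob^{-1,-2k}_{p,p}$, so convergence in the \emph{larger} space does not imply convergence in $\WSob^{-1,-2k}_{p,q}$. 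The fix is exactly the $L^\infty$ bound you already invoke for (iii): since the $\chi_{i,j}$ are uniformly bounded in $L_\infty$, the commutator terms in \eqref{eq:proof_suff:step_3:defR} lie in $\leb r + \WSob^{0,-2k+1}_r$ for \emph{every} $r \in (1,\infty)$, and applying Lemma~\ref{lem:compactembedding} with $r = \max\{p,q\}$ yields strong convergence in $\WSob^{-1,-2k}_{\max\{p,q\},\max\{p,q\}}$, which does embed continuously into $\WSob^{-1,-2k}_{p,q}$. Your argument as written, with $r = p$, fails whenever $p < q$ — which is exactly the subcritical Navier--Stokes regime.
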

    Together with the argument of \ref{pf:anisotropicSuff:3_convDiffEq} in the proof of Theorem \ref{thm:anisotropicSuff} (see also \cite[Lemma 2.15]{FM}), we can sharpen \ref{item2} to an equality $\A(\epsilon,\sigma)=\A(\epsilon_j,\sigma_j)$.
    Thus, if we define the functional
    \begin{equation*}
        \widetilde I (\epsilon,\sigma) 
        \coloneqq
            \int_{\mathcal T\times \Omega} \QA f(\epsilon,\sigma)\dx[(t,x)] 
        ,
    \end{equation*}
    acting on $ (\leb p\times \leb q)(\mathcal T\times \Omega;\IR^m\times\IR^m)$,
    we find that $\widetilde I$ is indeed a relaxation of $I$ as defined in \eqref{eq:defI}:
    \begin{equation*}
    	\widetilde I (\epsilon,\sigma) 
    	= \inf\Big\{
    	\liminf_{j \to \infty} I(\epsilon_j,\sigma_j): 
    	(\epsilon_j,\sigma_j)\wconv (\epsilon,\sigma)\txtin \leb p\times \leb q
        ,\, \A(\epsilon_j,\sigma_j) = \A(\epsilon,\sigma)
    	\Big\}.
    \end{equation*}
\begin{proof}[Sketch of Theorem \ref{thm:relaxation}.]
    We refrain from giving a full proof of this statement, as it can be obtained with similar methods to \cite{AF84} (the proof in \cite{BFL} is via Young measures). 
    In particular, in contrast to Theorem \ref{thm:anisotropicSuff}, the additional difficulty of dealing with anisotropic spaces is minor. 
    The strategy is as follows:
	\begin{itemize}
		\item First, approximate $(\epsilon,\sigma)$ by a function $(\bar{\epsilon},\bar{\sigma})$ that is piecewise constant on small parabolic cubes;
		\item On each small parabolic cube find a compactly supported  $\A$-free function that almost realises the definition of $\QA^R f$. 
        Here, $\QA^R f$ is defined by
		\begin{equation*}
			\begin{split}
				\QA^R f(\hat\epsilon,\hat\sigma)
			\coloneqq
			\inf\Big\{
			&
			\int_{\IT_{d+1}} f(\hat\epsilon+\epsilon,\hat\sigma+\sigma)\dx[(t,x)]
			:
			\A(\epsilon,\sigma)=0, 
			\\  &    
			\epsilon,\sigma\in C^\infty_{\zeromean}(\IT_{d+1};\IR_{\sym,0}^{d\times d})
            ,\  \Vert \epsilon \Vert_{L^{\infty}}, \Vert \sigma \Vert_{L^{\infty}} \leq R
			\Big\},
			\end{split}
		\end{equation*}
		We then get a function $\psi$ with $L^{\infty}$ norm bounded by $2R$ such that on $D$
		\[
		\frac 1{\LL^{d+1}(D)}\int_{D}  f((\bar{\epsilon},\bar{\sigma}) + \psi) \dx[(t,x)]  \leq \QA^R f(\bar{\epsilon},\bar{\sigma}) + \varepsilon
		\]
		\item By combining these functions defined on disjoint cubes, we can construct a function $\psi$ such that
		\begin{equation*}
		    \begin{split}  
		\int_{\mathcal{T}\times \Omega} f((\bar{\epsilon},\bar{\sigma})+ \psi)  \dx[(t,x)]
            &   \leq \sum_D \LL^{d+1}(D) \bigl(\QA^R f(\bar{\epsilon},\bar{\sigma}) + \varepsilon \bigr)
        \\ &    = \int_{\mathcal{T} \times \Omega} (\QA^R f((\bar{\epsilon},\bar{\sigma})) + \varepsilon) \dx[(t,x)]
		    \end{split}
		\end{equation*}
		\item Letting the size of the cubes tend to zero yields the result, as $\QA^R f \to \QA f$ monotonically, i.e. one may choose $R$ large enough such that the difference in the integrals is only of scale $\varepsilon$.
	\end{itemize}
\end{proof}
We highlight that with a suitable coercivity condition for $f$, we may also replace $\varepsilon$ in Theorem \ref{thm:relaxation}
with $0$, as one can take a diagonal sequence in $j$ and $\varepsilon$. 
This is not possible without an a priori bound on the $(L_p \times L_q)$ norm of $(\epsilon_j,\sigma_j)$ as one might see via the example
\[
f(\epsilon,\sigma) = e^{- \vert \epsilon \vert^2 -\vert \sigma \vert^2},
\]
whose relaxation is $\QA f=0$.
\section{Adjustments for some pseudo-differential operators}\label{sec:5_pseudoDiffOps}

As the (non-Newtonian) Navier-Stokes equations feature the incompressibility constraint $\divergence u=0$ together with a pressure $\pi$, they do not immediately fit into the previously developed framework. Therefore, we need a few minor adjustments that are explained in this section.


\subsection{Abstract setup for additional side constrains}\label{sec:5_pseudoDiffOps:1_reformulation}
Motivated by the incompressibility condition in the Navier-Stokes equation, we now consider the system
\begin{equation}\label{eq:system11}
	\begin{cases}
		\dell_t u = \Qd\sigma - \Sd^\ast \pi &\text{in } \IT_{d+1},\\
		\Sd u=0 &\text{in } \IT_{d+1},\\
		\Qd^\ast u = \epsilon  &\text{in } \IT_{d+1},
	\end{cases}
\end{equation}
which includes an additional constraint $\Sd u =0$, where $\Sd\colon C^\infty(\IT_d;\IR^n)\to C^\infty(\IT_d;\IR^\ell)$ is another constant-rank differential operator (recall Section~\ref{sec:2_Spaces:1_consrRankOps} for this terminology).
The function $\pi$ (the `pressure') can be thought of as a Lagrange multiplier for the equation.
Our first goal is to rewrite \eqref{eq:system11} in a way that is comparable to \eqref{eq:defA}. 

To accomplish this, we assume the following compatibility condition:
\begin{equation}\label{eq:formulations:compatibility}
	\Image \Sd^\ast[\xi_x]\subset \Image \Qd[\xi_x],
	\quad\xi_x\in\IR^d\setminus\{0\}.
\end{equation}

With this, we can fix a homogeneous constant-rank potential $\Rd\colon C^\infty(\IT_d;\IR^\ell)\to C^\infty(\IT_d;\IR^n)$ for $\Sd$. 
Observe that for the projection $\Pi_{\Sd}$ onto the kernel of $\Sd$ as in \eqref{eq:kernelProjectionIsotropic} we have
\begin{equation*}
	\Pi_\Sd v = \Rd\circ\Rd^{-1}v + \int_{\IT_{d}} v\dx,\quad v\in C^\infty(\IT_{d};\IR^n).
\end{equation*}
Moreover, there exists an annihilator $\widetilde \Pd^\ast\colon C^\infty(\IT_d;\IR^m)\to C^\infty(\IT_d;\IR^\ell)$ for $\Qd^\ast \circ \Rd$.
Here, \eqref{eq:formulations:compatibility} ensures the constant-rank condition.

Using these operators, we are able to formulate two equivalent conditions for the existence of $u$ and $\pi$ so that \eqref{eq:system11} holds.

\begin{lemma}
	Let $\epsilon,\sigma\in C^\infty(\IT_{d+1};\IR^m)$.
	Assume \eqref{eq:formulations:compatibility}.
	Then, the following conditions are equivalent:
	\begin{enumerate}[label=(\roman*)]
		\item \label{it:formulationsPseudo:uPi}
		There exist $u\in C^\infty(\IT_{d+1};\IR^n)$ and $\pi \in C^\infty(\IT_{d+1};\IR^\ell)$ so that \eqref{eq:system11} holds;
		\item \label{it:formulationsPseudo:epsSig}
		\begin{equation*}
			\begin{cases}
				\dell_t \epsilon - \Qd^\ast\Pi_\Sd\Qd \sigma=0
				&\text{in }\IT_{d+1},
				\\  \widetilde{\Pd}^\ast \epsilon=0
				&\text{in }\IT_{d+1},
				\\ \int_{\IT_{d+1}}\epsilon\dx[(t,x)]=0.
			\end{cases}
		\end{equation*}
	\end{enumerate}
	In particular, assuming \ref{it:formulationsPseudo:epsSig}, we can choose $u$ and $\pi$ as
	\begin{equation}\label{eq:recoverUPi:1}
		u\coloneqq (\Qd^\ast)^{-1}\epsilon
		\quad
		\text{and}
		\quad
		\pi\coloneqq (\Sd^{\ast})^{-1}\Qd\sigma.
	\end{equation}
\end{lemma}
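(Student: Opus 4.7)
The plan is to prove both directions by leveraging the Fourier-level containment implied by the compatibility hypothesis \eqref{eq:formulations:compatibility}. Passing to orthogonal complements, $\Image\Sd^\ast[\xi_x]\subset\Image\Qd[\xi_x]$ translates into $\ker\Qd^\ast[\xi_x]\subset\ker\Sd[\xi_x]$, and this is the algebraic fact that makes the Moore--Penrose inverses in \eqref{eq:recoverUPi:1} behave as genuine inverses on the appropriate subspaces. Throughout the argument I will use that $\Pi_\Sd$ and the potentials $\Rd,\Pd,\widetilde\Pd$ act only spatially and therefore commute with $\dell_t$, and that for a potential/annihilator pair one has $\ker\widetilde\Pd^\ast=\Image(\Qd^\ast\Rd)$ (up to the spatial-mean subspace).

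For the implication \ref{it:formulationsPseudo:uPi}$\Rightarrow$\ref{it:formulationsPseudo:epsSig}, I would apply $\Pi_\Sd$ to the evolution equation in \eqref{eq:system11}. Since $u(t,\cdot)\in\ker\Sd$ the left-hand side is unchanged, while $\Pi_\Sd\Sd^\ast\pi=0$ because $\Image\Sd^\ast\subset(\ker\Sd)^\perp$. Hence $\dell_t u=\Pi_\Sd\Qd\sigma$, and applying $\Qd^\ast$ together with $\Qd^\ast u=\epsilon$ yields the first equation of \ref{it:formulationsPseudo:epsSig}. For the second equation, the decomposition $u=\Rd\Rd^{-1}u+\overline u$ valid on $\ker\Sd$ (with $\overline u$ the spatial mean) gives $\epsilon=\Qd^\ast u=\Qd^\ast\Rd(\Rd^{-1}u)$, which lies in $\ker\widetilde\Pd^\ast$ by construction. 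The zero-mean condition $\int\epsilon\dx[(t,x)]=0$ is immediate, since $\Qd^\ast$ is a positive-order spatial differential operator and the spatial domain is a torus.

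For the converse \ref{it:formulationsPseudo:epsSig}$\Rightarrow$\ref{it:formulationsPseudo:uPi}, set $u$ and $\pi$ as in \eqref{eq:recoverUPi:1}. The constraint $\widetilde\Pd^\ast\epsilon=0$ together with zero mean places $\epsilon$ in $\Image(\Qd^\ast\Rd)$, so writing $\epsilon=\Qd^\ast\Rd w$ and applying the Moore--Penrose identity gives $u=\Pi_{(\ker\Qd^\ast)^\perp}\Rd w$; since $\Rd w\in\ker\Sd$ and the discarded component lies in $\ker\Qd^\ast\subset\ker\Sd$, we obtain $\Sd u=0$. The identity $\Qd^\ast u=\epsilon$ then follows from $\epsilon\in\Image\Qd^\ast$. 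For the evolution equation I would verify separately that $\Sd^\ast\pi=(I-\Pi_\Sd)\Qd\sigma$, which is legal precisely because $(I-\Pi_\Sd)\Qd\sigma\in\Image\Sd^\ast$ by compatibility, and that $\dell_t u=(\Qd^\ast)^{-1}\dell_t\epsilon=\Pi_\Sd\Qd\sigma$, where the last equality uses $\Pi_\Sd\Qd\sigma\in\Image\Qd=(\ker\Qd^\ast)^\perp$ (again a consequence of compatibility). Adding the two gives $\dell_t u+\Sd^\ast\pi=\Qd\sigma$, completing the proof.

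The principal obstacle is bookkeeping: at each step one must confirm that the operand of a Moore--Penrose inverse lies in the appropriate range, so that the pseudo-inverse actually inverts rather than merely projects. All these range identifications funnel back to the single hypothesis \eqref{eq:formulations:compatibility}, together with the potential/annihilator relations for the triple $\Pd,\Rd,\widetilde\Pd$.
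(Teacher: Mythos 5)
Your proof is correct and follows essentially the same route as the paper: apply $\Qd^\ast\Pi_\Sd$ to the evolution equation to get the first condition of~(ii), use $\Sd u=0$ together with the potential relation $\ker\widetilde\Pd^\ast=\Image(\Qd^\ast\Rd)$ for the second, and in the converse direction define $u,\pi$ via the Moore--Penrose inverses and verify the three equations of~\eqref{eq:system11}, with \eqref{eq:formulations:compatibility} entering through $\ker\Qd^\ast\subset\ker\Sd$ and $\Image\Sd^\ast\subset\Image\Qd$ at each range-identification step. The only difference is cosmetic: the paper verifies the evolution equation in one line by factoring everything through $(\Qd^\ast)^{-1}$, while you split it into $\dell_t u=\Pi_\Sd\Qd\sigma$ and $\Sd^\ast\pi=(I-\Pi_\Sd)\Qd\sigma$. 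One small point you compress: in~(ii)$\Rightarrow$(i), to conclude $\epsilon\in\Image(\Qd^\ast\Rd)$ you need $\epsilon(t,\cdot)$ to have zero \emph{spatial} mean for each $t$, not just zero space-time mean; as the paper notes, this follows by integrating $\dell_t\epsilon=\Qd^\ast\Pi_\Sd\Qd\sigma$ over $\IT_d$ at fixed $t$ (the right-hand side has zero spatial mean, so $\int_{\IT_d}\epsilon(t,\cdot)\dx$ is constant in $t$ and the space-time condition forces it to vanish). Your phrase ``together with zero mean'' gestures at this but leaves the deduction implicit.
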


\begin{proof}
	The proof uses algebraic properties of the Moore--Penrose inverse \eqref{eq:moorePenrose:multiplierDef} as stated for example in \cite{Penrose_1955}.
	\begin{enumerate}[leftmargin=0pt,itemindent=2cm]
		\item [\ref{it:formulationsPseudo:uPi}$\implies$\ref{it:formulationsPseudo:epsSig}.]
		We apply the operator $\Qd^\ast \Pi_\Sd$ to $\dell_t u-\Qd\sigma+ \Sd^\ast \pi=0$. We then obtain $\dell_t\epsilon-\Qd^\ast\Pi_\Sd\Qd\sigma=0$.
		As $\Sd u=0$, we get $\epsilon=\Qd^\ast u= \Qd^\ast \Pi_\Sd u = \Qd^\ast \Rd\Rd^{-1} u$, so that $\widetilde{\Pd}^\ast\epsilon=0$.
		Finally,
		\begin{equation*}
			\int_{\IT_{d+1}}\epsilon\dx[(t,x)] = \int_{\IT_{d+1}}\Qd^\ast u\dx[(t,x)]=0.
		\end{equation*}
		\item [\ref{it:formulationsPseudo:epsSig}$\implies$\ref{it:formulationsPseudo:uPi}.]
		Note that $\dell_t\epsilon-\Qd^\ast\Pi_\Sd\Qd\sigma=0$ and the zero-mean condition on $\epsilon$ imply $\int_{\IT_d}\epsilon(t)\dx=0$ for all $t$.
		We define $u\coloneqq (\Qd^\ast)^{-1}\epsilon$
		and note that
		\begin{equation*}
			\Qd^\ast u 
			=   \Qd^\ast (\Qd^{\ast})^{-1} \epsilon
			=   \epsilon,
		\end{equation*}
		since $\epsilon = \Qd^\ast\Rd(\Qd^\ast\Rd)^{-1}\epsilon$.
		Next, on account of \eqref{eq:formulations:compatibility}, $ \Sd (\Qd^{\ast})^{-1}\Qd^\ast =  \Sd $, so that
		\begin{equation*}
			\Sd u 
			= \Sd (\Qd^{\ast})^{-1}\epsilon
			= \Sd (\Qd^{\ast})^{-1}\Qd^\ast\Rd(\Qd^\ast\Rd)^{-1}\epsilon
			= \Sd \Rd(\Qd^\ast\Rd)^{-1}\epsilon
			= 0.
		\end{equation*}
		Finally, we recover $\pi$ as $\pi\coloneqq (\Sd^{\ast})^{-1}\Qd\sigma$ and verify
		\begin{equation*}
			\dell_t u -\Qd\sigma  +\Sd^\ast \pi 
			=   (\Qd^\ast)^{-1}(\dell_t\epsilon -\Qd^\ast \Qd\sigma +\Qd^\ast\Sd^\ast\pi)
			=   (\Qd^\ast)^{-1}(\dell_t\epsilon -\Qd^\ast \Pi_\Sd\Qd\sigma )
			=   0.
		\end{equation*}
	\end{enumerate}
\end{proof}

This motivates the following adaptation of Definition~\ref{def:A}:
\begin{definition}\label{def:tildeA}
	Let $(\epsilon,\sigma) \in C^{\infty}(\IR^{d+1};\C^m \times \C^m)$. 
	We define the pseudo-differential operator 
	\begin{equation*}
		\widetilde{\A} \colon C^{\infty}(\IR^{d+1};\C^m \times \C^m) \longto (\IR^{d+1};\C^m\times \C^\ell)
	\end{equation*}
	as follows:
	\begin{equation} \label{eq:defTildeA}
		\widetilde{\A}(\epsilon,\sigma) 
		=   \begin{pmatrix}
			\partial_t \epsilon - \Qd^{\ast}\Pi_\Sd\Qd \sigma
			\\  \widetilde{\Pd}^\ast \epsilon
		\end{pmatrix}.
	\end{equation}
\end{definition}

\begin{example}[Reformulation of (inertialess) Navier--Stokes equations]\label{ex:formulations:NSExample}
	For later applications we elaborate on the construction above in the setting of the Navier--Stokes equation without the non-linear term $(u\cdot \nabla)u$:
	\begin{equation}\label{eq:formulations:NSExample}
		\begin{cases}
			\dell_t u  = \div \sigma - \nabla \pi
			&	\text{in }(0,T)\times\IT_d,
			\\  \div u =0
			&	\text{in }(0,T)\times\IT_d,
			\\  \tfrac{1}2(\nabla u+ (\nabla u)^T) = \epsilon
			&	\text{in }(0,T)\times\IT_d.
		\end{cases} 
	\end{equation}
	Let us briefly remark that due to the absence of $(u \cdot \nabla) u$ this system bears little physical relevance.\\
	We call ${u\colon(0,T)\times\IT_d\to \IR^d}$, $d\ge 2$, the velocity field of the fluid, $\sigma\colon(0,T)\times\IT_d\to \IR^{d\times d}_{\sym,0}$ the deviatoric stress tensor and $\pi\colon(0,T)\times\IT_d\to \IR$ the pressure.
	With $\IR^m \cong \IR^{d\times d}_{\sym,0}$, the space of symmetric trace-free matrices, we identify
	\begin{equation*}
		\Qd^\ast u 
		=   \frac{\nabla u + (\nabla u)^T}{2}
		-   \frac{\div u}{d} E_d
		\quad\text{and}\quad  \Sd u = \div u
	\end{equation*}
	(We denote with $E_d$ the unit matrix in $d$ dimensions).
	Observe that 
	\begin{equation*}
		\Image\Qd[\xi]=(\ker \Qd^\ast[\xi])^\perp =\IR^d
	\end{equation*}
	so that \eqref{eq:formulations:compatibility} is satisfied.
	Comparing this equation to the abstract framework developed before, we note:
	\begin{enumerate}[label=(\roman*)]
		\item 
		A potential for $\Sd =\div (=\nabla^{\ast})$ is given by  $\Rd = \curl^\ast$, where $\curl$
		is the usual higher-dimensional curl operator, $\curl u = (\dell_iu_j-\dell_ju_i)_{1\le i,j\le d}$;
		\item 
		The projection onto $\ker S$ is performed via the Leray Projector (e.g. \cite[Definition 2.8.]{Robinson_Rodrigo_Sadowski_2016}),
		\begin{equation*}
			\Pi_\Sd = (\mathrm{Id} -\Delta^{-1}\nabla\div)\colon C^\infty(\IT_d;\IR^d)\longto C^\infty(\IT_d;\IR^d)\cap \ker \div;
		\end{equation*}
		\item 
		A valid annihilator $\widetilde{\Pd}^\ast$ for the composition $\Qd^\ast \circ \Rd$ is
		\begin{equation*}
			\widetilde{\Pd}^\ast\coloneqq\Delta - (\nabla+\nabla^T)\div.
		\end{equation*}
		Via a direct computation one may show that
		\begin{equation*}
			\Image \Qd^\ast\Rd[\xi] 
			=   \left\{  
			\hat \sigma 
			\in \IR^{d\times d}_{\mathrm{sym},0}
			:   \abs{\xi}^2\, \hat \sigma 
			=  \hat \sigma (\xi\otimes \xi) 
			+ (\xi\otimes \xi)\hat \sigma
			\right\}.
		\end{equation*}
		The operator  $\widetilde{\Pd}^\ast$ above is also spanning.
	\end{enumerate}
	We conclude that $u,\pi,\sigma,\epsilon$ satisfy \eqref{eq:formulations:NSExample} if and only if
	\begin{equation*}
		\begin{cases}
			\dell_t \epsilon 
			- \Delta^{-1}\tfrac{\nabla + \nabla^T}{2}(\Delta-\nabla \div)\div\sigma
			=0
			&\text{in }(0,T)\times\IT_d,
			\\  \Delta\epsilon - (\nabla+\nabla^T)\div\epsilon=0
			&\text{in }(0,T)\times\IT_d,
			\\ \int_{\IT_{d+1}}\epsilon\dx[(t,x)] =0.
		\end{cases}
	\end{equation*}
	With \eqref{eq:recoverUPi:1}, the velocity $u$ and the pressure $\pi$ can be recovered as 
	\begin{equation*}
		u\coloneqq \Delta^{-1}\div \epsilon,
		\quad \pi \coloneqq \Delta^{-1}\div^2 \sigma.
	\end{equation*}
\end{example}


\subsection{$\widetilde \A$-quasiconvexity and weak lower semicontinuity for pseudo-differential operators}\label{sec:5_pseudoDiffOps:2_wlsc}
Section~\ref{sec:2_Spaces:2_prabolicDiffOps} revolved around the  differential operator $\dell_t \epsilon-\Qd^\ast\Qd\sigma$.
In this section we adjust the previous results to the setting of pseudo-differential operators as in Section~\ref{sec:5_pseudoDiffOps:1_reformulation}.
In particular, we stick to the compatibility condition \eqref{eq:formulations:compatibility}
\begin{equation*}
	\Image \Sd^\ast[\xi_x]\subset\Image \Qd [\xi_x],
	\quad \xi_x\in\IR^{d}\setminus\{0\}.
\end{equation*}
Definition~\ref{def:tildeA} then entails a corresponding notion of $\widetilde{\A}$-quasiconvexity:
\begin{definition} \label{def:tildeQuasiconvexity}
	We call a continuous function $f \colon \IR^m \times \IR^m \to [0,\infty)$ $\widetilde{\A}$-quasiconvex, if for all $(\hat{\epsilon},\hat{\sigma}) \in \IR^m \times \IR^m$ and all $(\epsilon,\sigma) \in C^{\infty}_{\#}(\IT_{d+1};\IR^m \times \IR^{m})$ with $\widetilde{\A}(\epsilon,\sigma)=0$ we have 
	\begin{equation} \label{tildeJensen}
		f(\hat{\epsilon},\hat{\sigma}) 
		\leq \int_{\IT_{d+1}} 
		f(\hat{\epsilon}+\epsilon(t,x),\hat{\sigma}+\sigma(t,x)) 
		\dx[(t,x)].
	\end{equation}
\end{definition}

All the results from Section~\ref{sec:3_propA} can be easily adjusted to the pseudo-differential setting and we will only briefly touch upon the changes necessary:
\begin{enumerate}[itemindent=1cm]
	\item [\textit{Lemma~\ref{lem:studyKernel}}] 
	$\widetilde {\A}$ admits the same scaling as $\A$, i.e. 
	$\widetilde \A(\epsilon,\sigma) =0$ if and only if 
	\begin{equation*}
		\widetilde \A(\epsilon(\lambda^{2k}t, \lambda x),\sigma(\lambda^{2k}t, \lambda x)) =0, \quad \lambda\in\IN_{\ge1}. 
	\end{equation*}
	We may decompose the cone $\Lambda_{\widetilde \A}$ in a similar fashion as for $\Lambda_{\A}$ by observing
	\begin{equation*}
		\ker \Qd^\ast\Pi_\Sd \Qd[\xi_x]
		= \ker \Rd^\ast\Qd[\xi_x] 
		= \Image \widetilde \Pd[\xi_x],
		\quad \xi_x\in \IR^{d}\setminus\{0\}.
	\end{equation*}
	From this it also follows that $\widetilde\A$ satisfies the constant-rank condition. 
	\item [\textit{Lemma~\ref{lem:AcorrectionOperator}}]
	The non-linear projection operator $\widetilde \Pi_{\widetilde{\A}}$ can be defined by replacing $\A$ with $\widetilde{\A}$.
	The only adjustment necessary in the proof is to replace $\Qd^\ast\Qd$ with $\Qd^\ast \Pi_\Sd\Qd$.
	Condition \eqref{eq:formulations:compatibility} enters in the algebra necessary to compute the inverse $(\Qd^\ast\Pi_\Sd\Qd)^{-1}$.
	\item [\textit{Lemma~\ref{lem:DiffOpVsEquiInt}}]
	It suffices to replace $\Qd^\ast \Qd$ with $\Qd^\ast\Pi_\Sd\Qd$ in the proof.
	\item [\textit{Lemma~\ref{lemma:base}}]
	If $f$ is $\widetilde \A$-quasiconvex it is also $\widetilde{\Pd}^\ast$-quasiconvex in the first argument and convex in the second.
	The local Lipschitz estimate continues to hold.
	\item [\textit{Lemma~\ref{lem:polyConeCvx}}]
	The chain of implications \eqref{implicationchain} transfers to $\widetilde \A$. 
	Each occurrence of $\Qd^\ast\Qd$ in the proof can be replaced with $\Qd^\ast\Pi_\Sd\Qd$. 
\end{enumerate}

For the corresponding analogue of the weak lower semicontinuity result, Theorem~\ref{thm:anisotropicSuff}, we require some additional statements about pseudo-differential operators.

First, it is possible to define pseudo-differential operators on domains by assuming $\Omega\subset (0,1)^{d}\subset\IT_d$ so that we may apply the previous Fourier theory. 
The action of a pseudo-differential operator on a function $\sigma\in \leb q(\Omega)$ is then given by duality,
\begin{equation*}
	\pair{\widetilde \IA\sigma,\psi}{}\coloneqq (-1)^k\,\int_\Omega\sigma\cdot \widetilde\IA^\ast\psi\dx,\quad \psi\in C_c^\infty(\Omega),
\end{equation*}
since functions in $C_c^\infty(\Omega)$ are periodic with respect to $\IT_d$.

Second, to substitute for the lack of a product rule for pseudo-differential operators, we mention the following fact (e.g. \cite{ruzhansky_quantization_2010, ruzhansky_Lp_2015}):
\begin{lemma}\label{lem:commutatorEstimate}
	Let $1<r<\infty$, $\alpha, k\in \IR$, $\eta\in C^\infty(\IT_{d})$ and $\widetilde \IA\colon C^\infty(\IT_{d};\IR^M)\to C^\infty(\IT_{d};\IR^M)$ be a $k$-homogeneous constant-rank pseudo-differential operator.
	Then the commutator
	\begin{equation*}
		M\colon\sigma\longmapsto \eta\cdot \widetilde{\IA}\sigma -\widetilde \IA (\eta\cdot \sigma)
	\end{equation*}
	extends to a bounded linear operator $M\colon \sobW{\alpha}r(\IT_d)\to \sobW{\alpha-k+1}{r}(\IT_d)$.
\end{lemma}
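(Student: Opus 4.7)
The plan is a commutator calculation for Fourier multipliers adapted to the torus. By the constant-rank property and $k$-homogeneity, $\widetilde\IA$ acts as a Fourier multiplier with symbol $\widetilde\IA[\xi]$ satisfying
\[
	\abs{\dell_\xi^\nu \widetilde\IA[\xi]}\le C_\nu\, \abs\xi^{k-\abs\nu},\quad \xi\in\IR^d\setminus\{0\},
\]
as in Remark~\ref{rem:spatialOpsAsMultiplier}. After smoothing the symbol on a small neighbourhood of the origin, which alters neither its restriction to $\IZ^d\setminus\{0\}$ nor the operator up to a finite-rank term, I may assume the symbol is smooth on all of $\IR^d$ with $\abs{\dell^\nu \widetilde\IA[\xi]}\le C_\nu\,(1+\abs\xi)^{k-\abs\nu}$.

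Next, I would expand $\eta\cdot\widetilde\IA\sigma$ and $\widetilde\IA(\eta\cdot\sigma)$ in Fourier series on $\IT_d$ and reindex the resulting double sums. The symbol of the commutator at the pair $(\mu,\zeta)\in\IZ^d\times\IZ^d$ is $\widetilde\IA[\mu]-\widetilde\IA[\mu+\zeta]$, so
\[
	M\sigma(x) = \sum_{\mu,\zeta\in\IZ^d}\bigl(\widetilde\IA[\mu]-\widetilde\IA[\mu+\zeta]\bigr)\,\fourF\eta(\zeta)\,\fourF\sigma(\mu)\, e^{2\pi i\, x\cdot(\mu+\zeta)}.
\]
The elementary identity $\widetilde\IA[\mu+\zeta]-\widetilde\IA[\mu]=\sum_{j=1}^d \zeta_j\int_0^1 \dell_{\xi_j}\widetilde\IA[\mu+s\zeta]\dd s$ then represents $M$ as a superposition in $s\in[0,1]$ and $\zeta\in\IZ^d$ of Fourier multipliers with symbols $\dell_{\xi_j}\widetilde\IA[\,\cdot\,+s\zeta]$ of order $k-1$, modulated by $e^{2\pi i\, x\cdot\zeta}$ and weighted by $\zeta_j\,\fourF\eta(\zeta)$. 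For each fixed $(s,\zeta)$, the shifted multiplier is bounded $\sobW{\alpha}{r}(\IT_d)\to \sobW{\alpha-k+1}{r}(\IT_d)$ by Lemma~\ref{lem:FM} \ref{FM:1}; it remains to sum over $(s,\zeta)$, which is where the rapid decay $\sum_\zeta(1+\abs\zeta)^N\abs{\fourF\eta(\zeta)}<\infty$ of the smooth $\eta$ will be used.

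The main obstacle will be to obtain uniformity of these multiplier estimates as $(s,\zeta)$ vary, with at most polynomial growth in $\abs\zeta$. For shifts with $\abs{\mu+s\zeta}\gtrsim 1+\abs\zeta$ this is immediate from the $(k-1)$-homogeneity of $\dell_\xi\widetilde\IA$ away from the origin; in the complementary regime $\abs{\mu+s\zeta}\lesssim \abs\zeta$ the smoothed symbol bound near the origin provides uniform control on at most $C(1+\abs\zeta)^d$ relevant integer frequencies $\mu$, contributing a finite-rank piece whose norm grows polynomially in $\abs\zeta$ and is absorbed by $\fourF\eta$. Alternatively, the whole estimate may be derived directly from the standard composition and commutator formulas in the pseudo-differential calculus on the torus (cf. \cite{ruzhansky_quantization_2010, ruzhansky_Lp_2015}), which encapsulates exactly this symbol bookkeeping.
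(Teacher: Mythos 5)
The paper offers no proof of Lemma~\ref{lem:commutatorEstimate}: it is stated as a known fact with a pointer to \cite{ruzhansky_quantization_2010, ruzhansky_Lp_2015}. Your sketch is a legitimate self-contained version of the argument those references encode. The first-order Taylor expansion $\widetilde\IA[\mu+\zeta]-\widetilde\IA[\mu]=\sum_j\zeta_j\int_0^1\dell_{\xi_j}\widetilde\IA[\mu+s\zeta]\,\dd s$ turns $M$ into a superposition over $(s,\zeta)$ of multipliers of order $k-1$, with symbols shifted by $s\zeta$, composed with a modulation by $e^{2\pi i\,x\cdot\zeta}$, and the super-polynomial decay of $\fourF\eta$ then absorbs the polynomial growth in $|\zeta|$ of the resulting operator norms. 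Two remarks. First, the multiplier theorem you actually invoke here is the purely spatial Mikhlin--H\"ormander theorem on $\IT_d$; Lemma~\ref{lem:FM}~\ref{FM:1} as stated is its anisotropic space-time variant on $\IT_{d+1}$, so the reference should be read as shorthand for the underlying toroidal multiplier theorem. The applicable hypotheses are $k$-homogeneity and smoothness of the symbol away from the origin, which supply the Mikhlin bounds; the constant-rank property is not used. Second, and more substantially, the uniformity of the shifted multiplier bound, which you correctly flag as the main obstacle, is the step that needs to be carried through explicitly: for $|\xi|\gtrsim 1+|\zeta|$ one has $1+|\xi+s\zeta|\sim 1+|\xi|$ and the Mikhlin derivative bounds hold with an extra factor of at most $(1+|\zeta|)^{O(1)}$, while for $|\xi|\lesssim 1+|\zeta|$ a smooth cutoff of the (already smoothed) symbol to that compact frequency region yields an operator whose $\leb r\to\leb r$ norm also grows at most polynomially in $|\zeta|$; the further modulation by $e^{2\pi i\,x\cdot\zeta}$ costs a factor $(1+|\zeta|)^{|\alpha-k+1|}$ on $\sobW{\alpha-k+1}{r}(\IT_d)$, and all of this is dominated by the decay of $\zeta\mapsto\zeta_j\fourF\eta(\zeta)$. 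With that bookkeeping carried out, your argument is correct and is in substance the composition/commutator calculus recorded in the cited references.
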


\begin{corollary}\label{thm:pseudoAnisotropicSuff}
	Let $p,q,r\in (1,\infty)$ so that $1<r \leq \min\{p,q\}$. 
	Assume that
	\begin{equation*}
		\begin{cases}
			   \hfill w_j\wconv w  
			&   \text{in } 
			(\leb p\times \leb q ) (\mathcal T\times \Omega;\IR^m\times\IR^m ),
			\\   \hfill \widetilde\A w_j\conv \widetilde \A w  
			&   \text{in } 
			(\WSob^{-1,-2k}_{r,r}
			\times \sobW{0,-k'}{r})(\mathcal T\times \Omega;\IR^m\times\IR^n),
		\end{cases}
	\end{equation*}
	and that $f$ is $\widetilde \A$-quasiconvex and permits the growth condition
	\begin{equation*}
		0\le 
        f(\hat \epsilon,\hat \sigma) 
		\leq C_1\,(1+ \vert \hat \epsilon \vert^p + \vert \hat \sigma \vert^q)
		,\quad (\hat \epsilon,\hat \sigma) \in \IR^m\times\IR^m.
	\end{equation*}
	Then we have
	\begin{equation*}
		\int\limits_{\mathcal T\times \Omega} 
		f(w)
		\dx[(t,x)]
		\le 
		\liminf_{j\to\infty}
		\int\limits_{\mathcal T\times \Omega} 
		f(w_j)
		\dx[(t,x)].
	\end{equation*}
\end{corollary}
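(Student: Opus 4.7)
The plan is to adapt the four-step structure of the proof of Theorem~\ref{thm:anisotropicSuff} to the pseudo-differential setting. As noted in the bulleted list preceding the statement, the four supporting ingredients — the parabolic projection $\widetilde{\Pi}_{\widetilde{\A}}$, the equi-integrability upgrade of Lemma~\ref{lem:DiffOpVsEquiInt}, the basic properties collected in Lemma~\ref{lemma:base}, and the constant-rank/scaling analysis of Lemma~\ref{lem:studyKernel} — all transfer to $\widetilde{\A}$ merely by replacing $\Qd^\ast\Qd$ with $\Qd^\ast\Pi_\Sd\Qd$ and $\Pd^\ast$ with $\widetilde{\Pd}^\ast$. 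Consequently, Step~1 (replacement by a $(p,q)$-equi-integrable sequence via \cite[Lemma~2.15]{FM}), Step~2 (reduction to a constant target on the unit cube by covering with small parabolic cubes of appropriate scaling), and Step~4 (applying $\widetilde{\Pi}_{\widetilde{\A}}$ to the truncated sequence and invoking the Jensen-type inequality \eqref{tildeJensen}) run verbatim after these symbolic substitutions, using Lemma~\ref{lem:DiffOpVsEquiInt} adapted to $\widetilde{\A}$ to promote the convergence of $\widetilde{\A}w_j$ from order $r$ to the natural $(p,q)$-order that the nonlinear projection requires.

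The only step requiring genuine modification is Step~3, where one multiplies the $(p,q)$-equi-integrable sequence by a cutoff $\eta_\eps\in C_c^\infty((0,1)^{d+1})$ with $\eta_\eps=1$ on $(\eps,1-\eps)^{d+1}$ to pass from the cube to $\IT_{d+1}$. In the proof of Theorem~\ref{thm:anisotropicSuff} this step relied on the Leibniz rule, but the first component of $\widetilde{\A}$ now involves the genuinely pseudo-differential operator $\Qd^\ast\Pi_\Sd\Qd$ (the Leray-type projector $\Pi_\Sd$ is nonlocal). The idea is to write
\[
	\widetilde{\A}(\eta_\eps w_j) = \eta_\eps\, \widetilde{\A} w_j + \widetilde{R}_j,
\]
where the remainder $\widetilde{R}_j$ is a sum of commutators $[\eta_\eps,\widetilde{\IA}]$ for the operators $\widetilde{\IA}\in\{\dell_t,\,\Qd^\ast\Pi_\Sd\Qd,\,\widetilde{\Pd}^\ast\}$. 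By the commutator bound of Lemma~\ref{lem:commutatorEstimate}, each such commutator gains exactly one spatial derivative over the original operator, so that the components of $\widetilde{R}_j$ lie in $(\leb r + \sobW{0,-2k+1}{r})(\IT_{d+1})$ and $\sobW{0,-k'+1}{r}(\IT_{d+1})$ respectively, with bounds uniform in $j$ for each fixed $\eps$. The compact embeddings of Lemma~\ref{lem:compactembedding} — combined, for the second component, with the temporal/spatial splitting of Lemma~\ref{lem:spacetimeConverter} applied to the first equation (following exactly the $S_j$--$T_j$ decomposition in the proof of Theorem~\ref{thm:anisotropicSuff}) — then upgrade the weak convergence $\widetilde{R}_j\wconv 0$ to strong convergence in $(\sobW{-1,0}{r}+\sobW{0,-2k}{r})\times\sobW{0,-k'}{r}$. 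A diagonal extraction in $\eps_j\to 0$ produces a sequence $\widetilde{w}_j=\eta_{\eps_j}w_j$ that is zero near the boundary of the cube and for which $\widetilde{\A}\widetilde{w}_j\to 0$ in the target space, allowing extension by periodicity to $\IT_{d+1}$.

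The main obstacle will be the bookkeeping in this commutator step: one needs that $\Qd^\ast\Pi_\Sd\Qd$ really is a bona fide pseudo-differential operator of order $2k$ for which Lemma~\ref{lem:commutatorEstimate} applies (this is where the compatibility hypothesis \eqref{eq:formulations:compatibility} enters, ensuring that $\Pi_\Sd$ respects the image of $\Qd$ so that no singularities are created at $\xi_x=0$), and that the gain of one spatial derivative compensates precisely the $-2k$ spatial loss in $\widetilde{\A}$ — this is what makes Lemma~\ref{lem:compactembedding}\,\ref{cptemb:2} directly applicable. The second component involving $\widetilde{\Pd}^\ast$, being an ordinary differential operator of order $k'$, can alternatively be treated by the Leibniz rule as in the original proof, but the pseudo-differential framework is cleaner and uniform. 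Once these regularity-counts match, the remainder of the argument is mechanical.
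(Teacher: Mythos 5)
Your proposal takes essentially the same route as the paper's own sketch: Steps 1, 2, 4 transfer after symbolic substitution, and Step 3 is handled via the commutator bound of Lemma~\ref{lem:commutatorEstimate} to replace the Leibniz rule for $\Qd^\ast\Pi_\Sd\Qd$, producing a remainder in $\sobW{0,-2k+1}{r}$ to which the compact embedding of Lemma~\ref{lem:compactembedding} (together with the $S_j$--$T_j$ splitting for the second component) applies. One minor inaccuracy: the compatibility condition \eqref{eq:formulations:compatibility} is not what makes $\Qd^\ast\Pi_\Sd\Qd$ a tame $2k$-homogeneous symbol (the Leray-type projector $\Pi_\Sd$ has a $0$-homogeneous singularity at $\xi_x=0$ regardless, which is harmless on $\IT_d$); rather, \eqref{eq:formulations:compatibility} is used for the constant-rank property and the Moore--Penrose algebra underlying the nonlinear projection $\widetilde\Pi_{\widetilde\A}$ in Step~4.
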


\begin{proof}[Proof (Sketch)]
	The proof follows the same structure as for Theorem~\ref{thm:anisotropicSuff}.
	In particular, \ref{pf:anisotropicSuff:1_equiInt} and \ref{pf:anisotropicSuff:2_transferTrous} remain mostly unchanged.
	\ref{pf:anisotropicSuff:3_convDiffEq} requires more detail, since the product rule does not apply to general pseudo-differential operators. 
	However, we see that with Lemma~\ref{lem:commutatorEstimate}, for the function $\eta_{\varepsilon} w_j$ we can still show strong convergence of $\A (\eta_{\varepsilon_j} w_j)$ in a suitable $L_r$-based space:
		Recall that we aim to evaluate the first component 
		\[
		\A_1 ( \eta_{\varepsilon}\, (\epsilon_j,\sigma_j)) = \partial_t(\eta_{\varepsilon} \epsilon_j) + \left(\Qd^{\ast} \Pi_{\mathcal{S}} \Qd \right) (\eta_{\varepsilon} \sigma_j),
		\]
        similar to \eqref{first}.
		The first summand can be routinely decomposed into
		\[
		\partial_t \eta_{\varepsilon} \epsilon_j + \eta_{\varepsilon} \epsilon_j.
		\]
		For the second term we find,
		\[
		\left(\Qd^{\ast} \Pi_{\mathcal{S}} \Qd \right) (\eta_{\varepsilon} \sigma_j) = \eta_{\varepsilon} \left(\Qd^{\ast} \Pi_{\mathcal{S}} \Qd \right) \sigma_j + M_j
		\]
		where $M_j$ is the commutator of Lemma \ref{lem:commutatorEstimate}.
        The same reference implies that $M_j$ is bounded in $\WSob^{0,-2k+1}_r(\IT_d)$. 
        We can therefore proceed with the argument parallel to Theorem \ref{thm:anisotropicSuff}.
	
	As the argument of Lemma~\ref{lem:DiffOpVsEquiInt} is solely based on Fourier multipliers, it is easily adapted and \ref{pf:anisotropicSuff:4_tructation} follows by the same argument as before.
\end{proof}

\begin{remark}
	One might also imagine to show lower semicontinuity more in the style of \cite{Morrey,AF84}, i.e. arguing with $u$ instead of $\epsilon$. 
    Due to the additional constraint $\Sd u=0$ however, it is still necessary to argue with some Fourier analysis arguments when considering the potential.
\end{remark}
\section{Applications to Navier--Stokes} \label{sec:6_applNS_largeP}
We now come to the application of the previous abstract setup to the non-Newtonian Navier--Stokes equation. 
To avoid an in-depth discussion on boundary values, we assume that either $\Omega= \IT_d$ is the $d$-dimensional torus (corresponding to periodic boundary values) or that $\Omega \subset \IR^d$ is open with Lipschitz boundary and $u$ has vanishing (Dirichlet) boundary data. 
We model a fluid flow by the following quantities:
\begin{itemize}
	\item $u \colon (0,T) \times \Omega \to \IR^d$ is the fluid's velocity;
	\item $\epsilon \colon (0,T) \times \Omega \to \IR^{d \times d}_{\sym,0}$ denotes the rate-of-strain, where $\IR^{d \times d}_{\sym,0}$ is the space of symmetric, trace-free matrices and $\epsilon = \epsilon(u) \coloneqq \tfrac{1}{2} (\nabla u + (\nabla u)^T)$;
	\item $\sigma \colon (0,T) \times \Omega \to \IR^{d \times d}_{\sym,0}$ denotes the (deviatoric) stress;
	\item $\pi \colon (0,T) \times \Omega \to \R$ denotes the pressure.
\end{itemize}
We assume throughout that these variables obey the following differential equation
\begin{equation} \label{NavSto}
	\begin{cases}
		\partial_t u + \divergence (u \otimes u) = \divergence \sigma -\nabla \pi & \text{in } (0,T) \times \Omega\\
		\divergence u=0 & \text{in } (0,T) \times \Omega
	\end{cases}
\end{equation}
together with an initial condition 
\begin{equation} \label{initial}
	u (0,\cdot) = u_0
\end{equation}
that is required to hold in a suitable sense (cf. Remark \ref{rem:applNS:wellPosed}). We also assume aforementioned boundary conditions, but omit writing them in detail throughout.

Observe that the setting \eqref{NavSto}, excluding the non-linearity $\divergence (u \otimes u)$, can be put into the framework of the previous section, cf. Example \ref{ex:formulations:NSExample}.

As discussed in the introduction, instead of considering a constitutive law $\sigma=\sigma(\epsilon)$, we assume that we are given a function $f \colon \IR^{d \times d}_{\sym,0} \times \IR^{d \times d}_{\sym,0} \to [0,\infty)$ that measures how much the pair $(\epsilon,\sigma)$ deviates from the observed material properties.
In the following, we assume that 
\begin{enumerate} [label=(A\arabic*)]
	\addtocounter{enumi}{-1}
	\item \label{it:assf:contNN} $f \colon \IR^{d \times d}_{\sym,0} \times \IR^{d \times d}_{\sym,0}\to \IR$ is continuous and non-negative;
	\item \label{it:assf:growth} we have the following growth condition
	\begin{equation*} \label{eq:assf:growth}
		f(\hat\epsilon,\hat\sigma) \leq C_1\, ( 1 + \vert \hat\epsilon \vert^p + \vert \hat\sigma \vert^q);
	\end{equation*}
	\item \label{it:assf:coercive} we have the following coercivity condition
	\begin{equation*} \label{eq:assf:coercive}
		f(\hat\epsilon,\hat\sigma) \geq C_2\, (\vert \hat\epsilon \vert^p + \vert \hat\sigma \vert^q) -C_3 - C_4\, \hat\epsilon \cdot \hat\sigma;
	\end{equation*}
	\item \label{it:assf:qcvx}  $f$ is $\widetilde \A$-quasiconvex.
\end{enumerate}
The objective of this section is to prove the existence of minimisers of the functional
\begin{equation}\label{eq:NSfunctional}
	I(\epsilon,\sigma)
	\coloneqq
	\begin{cases}
		\int_{(0,T)\times \Omega} f(\epsilon,\sigma)\dx[(t,x)] 
		&   \text{if there are $u$ and $\pi$ so that \eqref{NavSto} holds,}
		\\  \infty 
		&   \text{else}.
	\end{cases}
\end{equation}

\subsection{Integrands arising from constitutive laws}\label{sec:6_applNS_largeP:1_integrandsConstitLaw}
Depending on the situation of study, several choices for $f$ are available, distinguished by whether the relation $\epsilon$ vs. $\sigma$ is known prior, or whether it has to be inferred from measurement data.

The case of constitutive laws permits a natural choice of $f$. 
Following models studied in e.g. \cite{B17}, consider the case where 
\begin{equation*}
	\sigma(\epsilon) = D W(\epsilon),
\end{equation*}
for a convex function $W\in C^1(\IR^{d \times d}_{\sym,0} \times \IR^{d \times d}_{\sym,0})$ with superlinear growth.
This class includes the so-called \emph{generalised-Newtonian} constitutive laws, where $\sigma(\epsilon) = \mu(\abs{\epsilon})\epsilon$ for a continuous function $\mu\colon[0,\infty)\to \IR$ for which $s\mapsto \mu(s)s$ is non-decreasing.
This leads to the non-Newtonian Navier--Stokes system 
\begin{equation}\label{eq:NNNS}
	\begin{cases}
		\dell_t u - \div (u\otimes u) = \div(\mu(\abs{\epsilon})\epsilon) -\nabla \pi 
		&\text{in } (0,T) \times \Omega,
		\\  \div u =0
		&\text{in } (0,T) \times \Omega,
		\\  \epsilon= \tfrac{1}{2}(\nabla u + (\nabla u)^T) 
		&\text{in } (0,T) \times \Omega,
	\end{cases}
\end{equation}
which has been studied for example in \cite{B17}.
A classical example for a constitutive law is the power-law model (also known as Ostwald--De Waele model, cf. \cite{Ostwald1929}):
\begin{equation*}
	\sigma = \mu_0 (\kappa + \abs{\epsilon})^{p-2}\epsilon,
	\quad   \kappa \ge0,~\mu_0>0, ~p>1. 
\end{equation*}
As we will see in Lemma~\ref{lem:verifyAssConstitLaws} below, we have $\sigma = D W(\epsilon)$ if and only if $f(\epsilon,\sigma)=0$, where
\begin{equation}\label{eq:fFromConstitLaw}
	f(\hat\epsilon,\hat\sigma)\coloneqq W(\hat\epsilon) + W^\ast(\hat\sigma)-\hat\epsilon\cdot \hat\sigma,
	\quad \hat\epsilon,\hat\sigma\in \IR^{d \times d}_{\sym,0}.
\end{equation}
Here, $W^\ast(\hat\sigma) \coloneqq \sup_{\hat\epsilon'}\hat\epsilon'\cdot \hat\sigma-W(\hat\epsilon')$ is the convex conjugate function to $W$.
The function $f$ can be viewed as a quantitative measure of how much a given strain-stress pair deviates from the constitutive law.
In particular, we observe that
\begin{equation*}
	(\epsilon,\sigma)\text{ is a weak solution of \eqref{eq:NNNS}}
	\iff
	I(\epsilon,\sigma)=0.
\end{equation*}
We will use this perspective in Section~\ref{sec:7_applNS_smallP} to reproduce existence results for Leray-Hopf solutions. 
However, even if the infimum of $I$ is \emph{positive}, minimisers of $I$ can be viewed as a relaxed solution concept to the non-Newtonian Navier-Stokes system \eqref{eq:NNNS}.
In this sense, our variational formulation is a generalisation of the classical PDE formulation and we obtain Result~\ref{it:mainResults:1} as claimed in the introduction.

To connect back to the abstract theory in Section~\ref{sec:4_wlsc} resp. \ref{sec:5_pseudoDiffOps}, we verify that integrands arising from constitutive laws satisfy the required regularity conditions \ref{it:assf:contNN}--\ref{it:assf:qcvx}.
\begin{lemma}\label{lem:verifyAssConstitLaws}
	Let $W\in C^1(\IR^{d \times d}_{\sym,0} \times \IR^{d \times d}_{\sym,0})$ be convex with superlinear growth and define $f$ according to \eqref{eq:fFromConstitLaw}.
	Then
	\begin{enumerate}[label=(\roman*)]
		\item \label{it:propWtoF:contNN}
		$f$ is continuous, non-negative;        
		\item\label{it:propWtoF:growth}
		if $W$ satisfies
		\begin{equation*}
			c_1\, (\abs{\hat\epsilon}^p + 1) 
			\ge W(\hat\epsilon) 
			\ge c_2\abs{\hat\epsilon}^p - c_3,
		\end{equation*}
		then $f$ satisfies the growth and coercivity condition
		\begin{equation*}
			C_1\, (\abs{\hat\epsilon}^p+\abs{\hat\sigma}^q + 1 )
			\ge f(\hat\epsilon,\hat\sigma) 
			\ge C_2\, (\abs{\hat\epsilon}^p+\abs{\hat\sigma}^q) - C_3 - C_4\, \hat\epsilon\cdot\hat\sigma;
		\end{equation*}
		\item\label{it:propWtoF:gradIffVanish}
		for $\hat\epsilon,\hat\sigma \in \IR^{d \times d}_{\sym,0}$ we have
		$\hat\sigma=D W(\hat\epsilon)\iff f(\hat\epsilon,\hat\sigma)=0$;
		\item\label{it:propWtoF:Aqcvx}
		$f$ is $\widetilde\A$-quasiconvex.
	\end{enumerate}
\end{lemma}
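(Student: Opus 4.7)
The plan is to treat the four assertions in order, with parts (i) and (iii) following from standard facts about the Legendre transform, part (ii) from a direct duality argument, and part (iv) from polyconvexity plus the chain of implications in Lemma~\ref{lem:polyConeCvx} (as extended to $\widetilde\A$ in Section~\ref{sec:5_pseudoDiffOps:2_wlsc}).

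For \ref{it:propWtoF:contNN}, I would observe that superlinear growth of $W$ makes $W^\ast$ finite-valued on $\IR^{d\times d}_{\sym,0}$, hence continuous (a finite-valued convex function on a finite-dimensional space is continuous). Since $W\in C^1$, the integrand $f$ is continuous as a sum of continuous functions. Non-negativity is exactly the Fenchel--Young inequality $W(\hat\epsilon)+W^\ast(\hat\sigma)\ge \hat\epsilon\cdot\hat\sigma$.

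For \ref{it:propWtoF:growth}, a routine duality computation converts two-sided $p$-growth of $W$ into two-sided $q$-growth of $W^\ast$: from $W(\hat\epsilon)\le c_1(|\hat\epsilon|^p+1)$ one gets $W^\ast(\hat\sigma)\ge \tilde c_2|\hat\sigma|^q-\tilde c_3$ by testing with $\hat\epsilon=\lambda\,\hat\sigma/|\hat\sigma|$ and optimising in $\lambda$; conversely, from $W(\hat\epsilon)\ge c_2|\hat\epsilon|^p-c_3$ one gets $W^\ast(\hat\sigma)\le \tilde c_1(|\hat\sigma|^q+1)$ via Young's inequality inside the supremum. Summing these with $W$'s growth bounds and estimating the cross term through Young's inequality $|\hat\epsilon\cdot\hat\sigma|\le \tfrac{1}{p}|\hat\epsilon|^p+\tfrac{1}{q}|\hat\sigma|^q$ delivers the upper bound, while keeping the $-\hat\epsilon\cdot\hat\sigma$ term as is delivers the lower bound with $C_4=1$.

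For \ref{it:propWtoF:gradIffVanish}, this is the equality case of the Fenchel--Young inequality: $W(\hat\epsilon)+W^\ast(\hat\sigma)=\hat\epsilon\cdot\hat\sigma$ if and only if $\hat\sigma\in \partial W(\hat\epsilon)$, and convexity together with $W\in C^1$ force $\partial W(\hat\epsilon)=\{DW(\hat\epsilon)\}$.

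For \ref{it:propWtoF:Aqcvx}, I would exhibit $f$ as $\widetilde\A$-polyconvex and invoke Lemma~\ref{lem:polyConeCvx}\ref{it:implicationchain:polyToQuasi} in its $\widetilde\A$-version. Define
\[
g(\hat\epsilon,\hat\sigma)\coloneqq(\hat\epsilon,\hat\sigma,\hat\epsilon\cdot\hat\sigma)
\quad\text{and}\quad
h(a,b,c)\coloneqq W(a)+W^\ast(b)-c,
\]
so that $f=h\circ g$. The function $h$ is jointly convex (sum of two convex functions and a linear one). The components of $g$ are $\widetilde\A$-quasiaffine: the identity components trivially, and the bilinear pairing $(\hat\epsilon,\hat\sigma)\mapsto\hat\epsilon\cdot\hat\sigma$ by the $\widetilde\A$-analogue of Lemma~\ref{lemma:base}\ref{it:base:5_epsSigQuasiAffine}. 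The only non-routine step is verifying this last point for the pseudo-differential operator $\widetilde\A$; I would argue exactly as in the proof of Lemma~\ref{lemma:base}\ref{it:base:5_epsSigQuasiAffine}, writing $\epsilon=\Qd^\ast u$ with $\Sd u=0$ for a zero-mean $\widetilde\A$-free test pair $(\epsilon,\sigma)\in C^\infty_\#(\IT_{d+1})$, integrating by parts, and using $\Sd u=0$ together with $\dell_t\epsilon=\Qd^\ast\Pi_\Sd\Qd\sigma$ to conclude
\[
\int_{\IT_{d+1}}\epsilon\cdot\sigma\dx[(t,x)]
=(-1)^k\int_{\IT_{d+1}}u\cdot\Pi_\Sd\Qd\sigma\dx[(t,x)]
=(-1)^k\int_{\IT_{d+1}}u\cdot\dell_t u\,(\Qd^\ast)^{-\ast}\dx[(t,x)]=0,
\]
where the last simplification uses the recovery formulas \eqref{eq:recoverUPi:1} and the fact that $\Sd u=0$ makes the $\Sd^\ast\pi$-contribution drop out under the pairing with $u$. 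From here, $\widetilde\A$-polyconvexity of $f$ gives $\widetilde\A$-quasiconvexity. The main (minor) obstacle is this verification of quasi-affineness of the pairing under the pseudo-differential constraint; the remaining parts are bookkeeping with convex duality.
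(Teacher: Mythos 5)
Your proposal is correct and takes essentially the same route as the paper: non-negativity and continuity from the Fenchel--Young inequality and finiteness of $W^\ast$, the growth/coercivity bounds via the standard $p\leftrightarrow q$ duality for the conjugate, the equality case of Fenchel--Young for (iii), and $\widetilde\A$-polyconvexity of $f = W + W^\ast - \hat\epsilon\cdot\hat\sigma$ for (iv), where the quasi-affineness of the pairing transfers to the pseudo-differential setting because the $\Sd^\ast\pi$-term drops under the $L_2$-pairing with $u$ thanks to $\Sd u = 0$. One minor note: the final display in your verification of (iv) is notationally garbled (the factor $(\Qd^\ast)^{-\ast}$ does not parse); the cleaner computation is $\int \epsilon\cdot\sigma = (-1)^k\int u\cdot\Qd\sigma = (-1)^k\int u\cdot(\dell_t u + \Sd^\ast\pi) = 0$, using $\int u\cdot\dell_t u = 0$ and $\int u\cdot\Sd^\ast\pi = \pm\int \Sd u\cdot\pi = 0$.
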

\begin{proof}
	\begin{enumerate}[leftmargin=0pt, itemindent=2em ]
		\item [\ref{it:propWtoF:contNN}]
		Non-negativity and continuity follow from the definition of $W^\ast$.
		\item[\ref{it:propWtoF:growth}]
		It is immediate from the definition of $W^\ast$ that it satisfies analogous growth bounds with exponent $q$ instead of $p$.
		From here, the growth and coercivity of $f$ follow by estimating $W$ and $W^\ast$.
		\item [\ref{it:propWtoF:gradIffVanish}]
		This is a classical fact for convex functions, see e.g. \cite[Proposition 16.16]{bauschke2017}.
		\item[\ref{it:propWtoF:Aqcvx}]
		The sum of two convex functions $W$ and $W^\ast$ and the $\widetilde\A$-quasiaffine function $(\hat\epsilon,\hat\sigma)\mapsto -\hat\epsilon\cdot \hat\sigma$, $f$ is $\widetilde\A$-polyconvex, as discussed in Lemma~\ref{lemma:base}.
	\end{enumerate}
\end{proof}


\subsection{Data-driven formulations and relaxation}\label{sec:6_applNS_largeP:2_integrandsDataDriven}
In applications, the specific shape of a constitutive law must be derived by expert modelling and parameters have to be fitted against measurement data.
Only afterwards can we start to solve the PDE.
Here, we follow the data-driven paradigm, explored, for example, in \cite{CMO,CMO2} in the context of elasticity:
We propose to use the data in the definition of our integrand directly, to bypass modelling errors stemming from simplified models.

Assume that we have sampled the viscous properties of a fluid in a dataset of point-wise strain-stress pairs,
\begin{equation*}
	\D
	=\{
	(\hat\epsilon_i,\hat\sigma_i)
	:   i\in I
	\}\subset \IR_{\sym,0}^{d\times d}\times \IR_{\sym,0}^{d\times d}.
\end{equation*}
We then define the integrand
\begin{equation}\label{eq:dataDrivenf}
	f(\hat\epsilon,\hat\sigma)
	\coloneqq \inf_{i\in I} \abs{\hat\epsilon-\hat\epsilon_i}^p+\abs{\hat\sigma-\hat\sigma_i}^q,
	\quad (\hat\epsilon,\hat\sigma)\in\IR_{\sym,0}^{d\times d}\times \IR_{\sym,0}^{d\times d}
\end{equation}
for $1<p,q<\infty$, $1=\tfrac{1}{p}+\tfrac{1}{q}$, to measure how much a given strain-stress pair deviates from the measurement data.

If the underlying data is taken to be a constitutive law, $ \D =\{(\hat\epsilon,\diffD W(\hat\epsilon)):\hat\epsilon\in\IR_{\sym,0}^{d\times d}\}$, 
the data-driven distance function \eqref{eq:dataDrivenf} provides an alternative to the formula in \eqref{eq:fFromConstitLaw}.
The two notions \emph{do not} coincide, but they share the same set of zeros and exhibit comparable growth conditions.

To verify the applicability of our abstract theory, observe the following properties:
\begin{enumerate}[label=(\roman*)]
	\item $f$ as in \eqref{eq:dataDrivenf} is continuous and non-negative;
	\item It satisfies $(p,q)$-growth as in \ref{it:assf:growth}, since 
	\begin{equation*}
		f(\hat\epsilon,\hat\sigma) 
		\le \abs{\hat\epsilon-\hat\epsilon_0}^p 
		+\abs{\hat\sigma-\hat\sigma_0}^q \le C_1\, (1 + \abs{\hat\epsilon}^p + \abs{\hat\sigma}^q)
	\end{equation*}
	for some $(\hat\epsilon_0,\hat\sigma_0)\in  \D $;
	\item The coercivity condition \ref{it:assf:coercive} is equivalent to 
	\begin{equation*}
		\D
		\subset 
		\{(\hat\epsilon,\hat\sigma)
		:   C_A\, \hat\epsilon\cdot \hat\sigma + C_B > \abs{\hat\epsilon}^p+\abs{\hat\sigma}^q   
		\}
	\end{equation*}
	for some constants $C_A,C_B>0$.
	(cf. \cite[Lemma 5.6]{lienstromberg.etal_2022}), which tells us that the data should roughly follow a power-law model, $\sigma \approx c \abs{\epsilon}^{p-2}\epsilon$.
\end{enumerate}
Concerning \ref{it:assf:qcvx}, for arbitrary data we cannot ensure that the integrand $f$ in \eqref{eq:dataDrivenf} is $\widetilde\A$-quasiconvex in the sense of Definition~\ref{def:tildeQuasiconvexity}.
A suitable substitute is therefore given by the \emph{$\widetilde\A$-quasiconvex envelope} $\QTA f$, defined in \eqref{eq:qcvxEnvelope} (and adapted to the pseudo-differential case in the obvious way).
For the $\widetilde\A$-quasiconvex envelope, we obtain:
\begin{enumerate}[label=(\roman*')]
	\item 
	With Lemma~\ref{lemma:base}, $\QTA  f$ is continuous and non-negative;
	\item 
	Evidently $\QTA f  \le f$, so that
	\begin{equation*}
		\QTA  f(\hat\epsilon,\hat\sigma) 
		\le C_1\,(1 + \abs{\hat\epsilon}^p + \abs{\hat\sigma}^q);
	\end{equation*}
	\item
	Since the lower bound in \ref{it:assf:coercive} defines itself a $\widetilde\A$-quasiconvex function (cf. Lemma~\ref{lemma:base}), we conclude
	\begin{equation*}
		\QTA  f(\hat\epsilon,\hat\sigma) 
        \geq C_2 \, (\vert \hat\epsilon \vert^p + \vert \hat\sigma \vert^q) -C_3 - C_4\, \hat\epsilon \cdot \hat\sigma;
	\end{equation*}
	\item \label{it:relaxf:qcvx}
	$\QTA f $ is $\widetilde \A$-quasiconvex (due to Theorem \ref{thm:relaxation}; see also  \cite[Lemma 7.1.]{rindlerCalculusVariations2018}).
\end{enumerate}

The additional $\widetilde \A$-quasiconvexity of $\QTA f$ makes it the natural candidate to replace $f$ in \eqref{eq:NSfunctional}.
We define  
\begin{equation}\label{eq:relaxedNSfunctional}
	\widetilde{I}(\epsilon,\sigma)
	\coloneqq
	\begin{cases}
		\int_{(0,T)\times \Omega} \QTA {f}(\epsilon,\sigma)\dx[(t,x)] 
		&   \text{if there are $u$ and $\pi$ so that \eqref{NavSto} holds,}
		\\  \infty 
		&   \text{else}.
	\end{cases}
\end{equation}
In light of Theorem~\ref{thm:applNS:lsc} below, the functional $\widetilde I$ is sequentially lower semicontinuous.

Furthermore, as hinted to in Section \ref{sec:4_wlsc:3_relax}, one may show that $\widetilde I$ is the lower semicontinuous relaxation of $I$, i.e. 
\begin{equation*}
	\widetilde I (\epsilon,\sigma) 
	= \inf\Big\{
	\liminf_{j \to \infty} I(\epsilon_j,\sigma_j): 
	(\epsilon_j,\sigma_j)\wconv (\epsilon,\sigma)\txtin \leb p\times \leb q
	\Big\}.
\end{equation*}
We refer to \cite{Mskript,BFL} for details in the homogeneous case.

We can make the following observation:
The set 
\begin{equation*}
	\widetilde{  \D }\coloneqq \{(\hat\epsilon,\hat\sigma): \widetilde f(\hat\epsilon,\hat\sigma)=0\}
\end{equation*}
contains the original data set $ \D $ and any weak solution, i.e. $(\epsilon,\sigma)\in  (\leb p\times \leb q )$ such that $\widetilde{I}(\epsilon,\sigma)=0$, satisfies $(\epsilon,\sigma)(t,x)\in \widetilde{  \D }$ for almost every $(t,x)$.
This suggests the interpretation of $\widetilde{  \D }$ a \emph{convexification} of the data set that is necessary for the existence of weak solutions.
We call $\widetilde{  \D }$ the $\A$-$(p,q)$-quasiconvex envelope of $ \D $ and refer to \cite[Definition 6.2]{lienstromberg.etal_2022} for more information on this hull in the static case.


\subsection{Existence of minimisers for large exponents}\label{sec:6_applNS_largeP:3_existenceMinim}
We now come to the proofs for existence of minimisers of $I$.
To give the underlying equation \eqref{NavSto} meaning in the context of the minimisation problem \eqref{eq:NSfunctional}, we elaborate on the well posedness in terms of $(\epsilon,\sigma)$:
\begin{definition} \label{def:X}
	Fix $\tfrac{2d}{d+2}<p<\infty$ and $q=\tfrac{p}{p-1}$. 
	Define $r_0 \coloneqq \min  \{q,\tfrac{p}{2}\tfrac{d+2}{d} \}$.
	Consider an initial value $u_0\in \leb 2(\Omega)$ with $\div u_0=0$.
	Let $C_E\ge2$ be a suitable constant, specified below.
	We assume the following for $(\epsilon,\sigma)$:
	\begin{enumerate}[label=(X\arabic*)]
		\item\label{it:defX:pqInteg} 
		$\epsilon\in \leb p((0,T)\times \Omega;\IR^{d\times d}_{\mathrm {sym},0})\cap \leb \infty((0,T); \sobW{-1}2(\Omega;\IR^{d\times d}_{\mathrm {sym},0}))$ and\\ $\sigma\in  \leb q((0,T)\times \Omega;\IR^{d\times d}_{\mathrm {sym},0})$;
		\item \label{it:defX:NS}
		if $ u \in \WSob^{0,1}_p((0,T)\times \Omega;\IR^d)$ and $ \pi\in \leb {r_0} ((0,T)\times \Omega) $ denote the unique solutions (subject to periodic/Dirichlet boundary conditions) of 
		\begin{equation*}\label{eq:applNS:uPidef}
			\begin{cases}
				\tfrac{1}{2}(\nabla u +(\nabla u)^{T})  =\epsilon,
				\\	\int_{(0,T)\times\Omega}u\dx[(t,x)]=\int_{\Omega} u_0\dx
                
			\end{cases}
			\quad\text{and}\quad 
			\begin{cases}
				\Delta\pi = \div^2(\sigma - u\otimes u),
				\\	\int_{(0,T)\times\Omega} \pi\dx[(t,x)]=0
			\end{cases}
		\end{equation*}
		respectively, then
		\begin{equation}\label{eq:applNS:eqn}
			\begin{cases}
				\dell_tu + \div(u\otimes u)=\div \sigma -\nabla \pi 
				&\text{in }(0,T)\times\Omega,        
				\\  \div u=0
				&\text{in }(0,T)\times\Omega,
				\\  \epsilon= \tfrac{1}{2}(\nabla +\nabla^T)u
				&\text{in }(0,T)\times\Omega;
			\end{cases}
		\end{equation}
		\item \label{it:defX:kinEnergyBound}
		with $u$ as in \ref{it:defX:NS}, we have a kinetic-energy bound
		\begin{equation*}
			\norm{u}{\leb \infty(\leb2)}^2 \le C_E\,(\norm{u_0}{\leb 2}^2 + \norm{\epsilon}{\leb p}^p+ \norm{\sigma}{\leb q}^q+1);
		\end{equation*}
		\item\label{it:defX:bc} 
		we have the initial condition
		\begin{equation*}\label{eq:applNS:bc}
			u(0,\cdot) = u_0.
		\end{equation*}
	\end{enumerate}
	We write $(\epsilon,\sigma)\in X$ whenever \ref{it:defX:pqInteg}--\ref{it:defX:bc} hold.
\end{definition}
\begin{remark}\label{rem:applNS:wellPosed}
	Assume $(\epsilon,\sigma)$ satisfy \ref{it:defX:pqInteg}--\ref{it:defX:bc} and let $u,\pi$ be defined as in \ref{it:defX:NS}.
	We remark the following well-known facts for the non-Newtonian Navier--Stokes equations in our setting:
	\begin{enumerate}[label=(WP\arabic*),itemindent=*,leftmargin=1cm]
		\item \label{it:applNS:wellPosed:eqnInWr0}
		With the Korn--Poincaré estimate we obtain from \ref{it:defX:pqInteg} that $u$ lies in the classic energy space
		\begin{equation*}
			u\in \sobW{0,1}{p}( (0,T)\times\Omega  )\cap \leb\infty((0,T);\leb2(\Omega)).
		\end{equation*} 
		By interpolation, $u\in  \leb{p\frac{d+2}{d}}( (0,T)\times\Omega  )$.
		Thus, $u\otimes u\in \leb {\frac{p}{2}\frac{d+2}{d}}( (0,T)\times\Omega  ) $ and hence, $\pi \in \leb {r_0}( (0,T)\times\Omega  )$ where $r_0 = \min\{q,\tfrac{p}{2}\tfrac{d+2}{d}\}>1$.
		This also implies that the first equation in \eqref{eq:applNS:eqn} is well-posed in $\sobW{-1,0}{r_0}+\sobW{0,-1}{r_0}$.
		\item \label{it:applNS:wellPosed:bcWeak}
		By testing with $\phi\in C^\infty_c( \Omega  )$, we find for a.e. $t_0,t_1\in[0,T]$ that
		\begin{equation*}
			\int_{\Omega}(u(t_1)-u(t_0))\cdot\phi\dx 
			= -\int_{t_0}^{t_1} \int_{\Omega}(\sigma -  u\otimes u -E_d\pi)\cdot \nabla \phi\dx \dx[t].
		\end{equation*}
		This means $t\mapsto u(t)$ is continuous with resprect to the weak topology of $\leb 2(\Omega)$ and consequently the initial-value condition $u(0,\cdot)=u_0$ is well posed.
		\item \label{it:applNS:wellPosed:LinftyBound}
		For large exponents, $p\ge \tfrac{3d+2}{d+2}$, condition \ref{it:defX:kinEnergyBound} is redundant:
		We have
		\begin{equation*}
			(u\otimes u)\cdot \nabla u\in \leb 1((0,T)\times \Omega)
		\end{equation*}
		and thus we may test the equation \eqref{eq:applNS:eqn} with $u$ itself to derive the strong energy equality
		\begin{equation*}
			\tfrac 12 \norm{u(t_1)}{\leb 2}^2 = \tfrac 12 \norm{u(t_0)}{\leb 2}^2 - \int_{t_0}^{t_1} \int_{\Omega} \sigma\cdot\epsilon\dx\dx\dx[t],
			\quad\txtforae t_0,t_1\in [0,T].
		\end{equation*}
		This yields the kinetic-energy bound \ref{it:defX:kinEnergyBound} with $C_E=2$ by applying Young's inequality at $t_0=0, t_1=T$.
		\item \label{it:applNS:wellPosed:u_cptMap}
		The map $X\to \leb{2r}( (0,T)\times\Omega  ),\ (\epsilon,\sigma)\mapsto u$ is continuous from the weak $ (\leb p\times \leb q )$-topology to the strong $\leb {2r}$-topology for any $1<r<r_0$.
		Indeed, due to the previous bounds, the equation yields $u \in \sobW{1,-1}{r_0}( (0,T)\times\Omega  )$.
		A version of Aubin--Lions--Simons (e.g. \cite[Thm 7.4.1.]{Amann09}) implies that
		\begin{equation*}
			\leb\infty(\leb 2)\cap \sobW{0,1}{p}\cap \sobW{1,-1}{r_0}\cptmap \leb {2r}( (0,T)\times\Omega  ).
		\end{equation*}
		Here, we crucially need \ref{it:defX:kinEnergyBound} to conclude that $(\leb p\times\leb q)$-weakly convergent sequences in $X$ are uniformly bounded in the intersection above.
		The resulting continuity carries over to the non-linearity $u\otimes u$ so we conclude that the set $X$ is weakly closed in $ (\leb p\times \leb q ) ((0,T)\times\Omega)$.
		\item \label{it:applNS:wellPosed:epsSigForm}
		With the notation from Example \ref{ex:formulations:NSExample}, we see that $(\epsilon,\sigma)\in X$ implies
		\begin{equation*}
			\begin{cases}
				\dell_t \epsilon- \Qd^\ast\Pi_\Sd\Qd\sigma = -\Qd^\ast\Pi_\Sd\Qd (u\otimes u)
				&\text{in }(0,T)\times\Omega,
				\\  \widetilde \Pd^\ast \epsilon=0 
				&\text{in }(0,T)\times\Omega.
			\end{cases}
		\end{equation*}
		In light of Definition~\ref{def:tildeA}, we abbreviate this equation as
		\begin{equation*}
			\widetilde \A(\epsilon,\sigma) = -\Theta (u\otimes u).
		\end{equation*}
	\end{enumerate}
\end{remark}

\begin{definition}\label{def:applNS:defIonX}
	Let $p>\tfrac{2d}{d+2}$ and consider $u_0$  and $C_E$ as in Definition~\ref{def:X}.
	Define 
	\begin{equation*}
		I(\epsilon,\sigma)\coloneqq 
		\begin{cases}
			\int_{(0,T)\times\Omega} f(\epsilon,\sigma)\dx[(t,x)]
			&	\text{if }(\epsilon,\sigma)\in X,
			\\	\infty
			&	\text{else}.
		\end{cases}
	\end{equation*}
\end{definition}

We are now in a position to formulate the main results on lower semicontinuity of $I$ and existence of minimisers.

\begin{theorem}[Result \ref{it:mainResults:3}]\label{thm:applNS:lsc}
	If $p>\tfrac{2d}{d+2}$ and $f$ satisfies \ref{it:assf:contNN}, \ref{it:assf:growth} and \ref{it:assf:qcvx}, then $I$ is sequentially lower semicontinuous with respect to the weak topology on $ (\leb p\times \leb q ) ((0,T)\times \Omega )$.
\end{theorem}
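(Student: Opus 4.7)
The plan is to reduce the statement to an application of Corollary~\ref{thm:pseudoAnisotropicSuff}, the pseudo-differential analogue of the weak lower semicontinuity theorem. Consider a sequence $(\epsilon_j,\sigma_j)\wconv (\epsilon,\sigma)$ in $(\leb p\times\leb q)((0,T)\times\Omega)$; we may assume $\liminf_j I(\epsilon_j,\sigma_j)<\infty$, so after passing to a subsequence $(\epsilon_j,\sigma_j)\in X$ for every $j$.

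The first step is to establish that the limit is admissible and that the associated velocities converge strongly. By Banach--Steinhaus the sequence is bounded in $\leb p\times\leb q$; combined with the kinetic-energy bound \ref{it:defX:kinEnergyBound} and Korn--Poincaré this controls the associated velocities $u_j$ uniformly in $\leb\infty(\leb 2)\cap \sobW{0,1}{p}$. The PDE then bounds $\dell_t u_j$ uniformly in $\sobW{-1,0}{r_0}+\sobW{0,-1}{r_0}$, and Aubin--Lions yields $u_j\conv u$ strongly in $\leb{2r}((0,T)\times\Omega)$ for any $1<r<r_0$. This is precisely the content of Remark~\ref{rem:applNS:wellPosed} \ref{it:applNS:wellPosed:u_cptMap} and in particular gives $(\epsilon,\sigma)\in X$.

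The key step is to upgrade the differential constraint to a strong-convergence statement suitable for Corollary~\ref{thm:pseudoAnisotropicSuff}. Using the reformulation in Remark~\ref{rem:applNS:wellPosed} \ref{it:applNS:wellPosed:epsSigForm},
\begin{equation*}
\widetilde{\A}(\epsilon_j,\sigma_j) = -\Theta(u_j\otimes u_j), \qquad \widetilde{\A}(\epsilon,\sigma) = -\Theta(u\otimes u).
\end{equation*}
Since $u_j\conv u$ in $\leb{2r}$, we have $u_j\otimes u_j\conv u\otimes u$ in $\leb r$. As $\Theta$ is a second-order spatial operator, this produces
\begin{equation*}
\widetilde{\A}(\epsilon_j,\sigma_j) \conv \widetilde{\A}(\epsilon,\sigma) \quad\text{in}\quad (\sobW{-1,-2}{r,r}\times \sobW{0,-2}{r})((0,T)\times\Omega),
\end{equation*}
where for the first component we use the embedding $\sobW{0,-2}{r}\continj\sobW{-1,-2}{r,r}$ from Lemma~\ref{lem:identifyEquationSpace}, and the second component vanishes identically in view of the constraints defining $X$. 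The assumption $p>\tfrac{2d}{d+2}$ guarantees $r_0>1$, so that the interval $(1,\min\{p,q,r_0\})$ is non-empty and we may pick $r$ in this interval to meet the hypotheses of the corollary.

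With \ref{it:assf:contNN}, \ref{it:assf:growth} and \ref{it:assf:qcvx}, Corollary~\ref{thm:pseudoAnisotropicSuff} applied to this choice of $r$ immediately yields $\int f(\epsilon,\sigma)\le\liminf_j\int f(\epsilon_j,\sigma_j)$, as required. The principal obstacle, and the origin of the threshold $p>\tfrac{2d}{d+2}$, is precisely that one needs Aubin--Lions-type strong convergence of the non-linearity $u_j\otimes u_j$ in some $\leb r$ with $r>1$; without the kinetic-energy hypothesis built into the definition of $X$, this compactness would be inaccessible below the natural parabolic energy exponent, and the abstract lower semicontinuity machinery could not be applied to the quadratic inertial term.
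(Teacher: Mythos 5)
Your proof is correct and coincides with the paper's: weak closedness of $X$ and strong $\leb{2r}$-convergence of the velocities $u_j$ come from Remark~\ref{rem:applNS:wellPosed}, the identification $\widetilde\A(\epsilon_j,\sigma_j)=-\Theta(u_j\otimes u_j)$ reduces the differential constraint to the non-linearity, and Corollary~\ref{thm:pseudoAnisotropicSuff} finishes. The details you supply (Aubin--Lions, the embedding $\sobW{0,-2}{r}\continj\sobW{-1,-2}{r,r}$, the admissible range of $r$) merely unpack what the paper folds into its references to Remarks~\ref{it:applNS:wellPosed:u_cptMap}--\ref{it:applNS:wellPosed:epsSigForm} and Lemma~\ref{lem:identifyEquationSpace}.
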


\begin{proof}
	Let $(\epsilon_j,\sigma_j)\wconv(\epsilon,\sigma)$ in $ (\leb p\times\leb q ) ((0,T)\times\Omega )$.
	If $\liminf_{j\to\infty}I(\epsilon_j,\sigma_j)=\infty$ there is nothing to prove, so we may assume that $(\epsilon_j,\sigma_j)\in X$ for all $j\in\IN$.
	As $p>\tfrac{2d}{d+2}$, $X$ is weakly closed (cf. \ref{it:applNS:wellPosed:u_cptMap}) and we get $(\epsilon,\sigma)\in X$.
	Now, if we define $u_j,\pi_j,$ for $j\in \IN$ and $u,\pi$ according to \ref{it:defX:NS}, \ref{it:applNS:wellPosed:u_cptMap} implies that $u_j\conv u$ in $\leb{2r}$.
	Therefore, 
	\begin{equation*}
		\widetilde  \A(\epsilon_j,\sigma_j) 
		=   -\Theta(u_j\otimes u_j) 
		\conv -\Theta(u\otimes u)
		=   \widetilde \A(\epsilon,\sigma)
	\end{equation*}
	in $\WSob^{-1,-2}_{r,r}\times \sobW{0,-2}{r}$.
	As $f$ satisfies \ref{it:assf:growth} and \ref{it:assf:qcvx}, we can use the lower semicontinuity result from Corollary \ref{thm:pseudoAnisotropicSuff} to conclude
	\begin{equation*}
		I(\epsilon,\sigma)
		=   \int_{(0,T)\times \Omega} f(\epsilon,\sigma)\dx[(t,x)]
		\le \liminf_{j\to\infty}
		\int_{(0,T)\times \Omega} f(\epsilon_j,\sigma_j)\dx[(t,x)]
		= \liminf_{j\to \infty} I(\epsilon_j,\sigma_j).
	\end{equation*}
\end{proof}
While sequential weak lower-semicontinuity may be shown for any $p>\tfrac{2d}{d+2}$, the second main ingredient to the direct method, i.e. coercivity, is only valid for $p \geq \tfrac{3d+2}{d+2}$.
\begin{lemma}\label{lem:applNS:coercive}
	If $p\ge \tfrac{3d+2}{d+2}$ and $f$ satisfies \ref{it:assf:contNN} and \ref{it:assf:coercive}, then $I$ is coercive in the sense that for each $C\ge 0$, each set 
	\begin{equation*}
		\{(\epsilon,\sigma): I(\epsilon,\sigma)\le C\}
	\end{equation*}
	is relatively weakly compact in $ (\leb p\times \leb q ) ((0,T)\times \Omega )$.
\end{lemma}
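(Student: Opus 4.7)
The plan is to exploit the strong energy equality for $p\ge \tfrac{3d+2}{d+2}$ as discussed in Remark \ref{rem:applNS:wellPosed} \ref{it:applNS:wellPosed:LinftyBound}, which converts the coupling term $\epsilon\cdot \sigma$ in the coercivity condition \ref{it:assf:coercive} into a bound depending only on the initial data $u_0$. This closes the loop and provides a uniform $(L_p\times L_q)$-bound on any sublevel set, which in turn yields weak compactness by reflexivity.

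In more detail, I would take a sequence $\{(\epsilon_j,\sigma_j)\}_{j\in\IN}$ with $I(\epsilon_j,\sigma_j)\le C$. Since $I$ is finite on this sequence, each $(\epsilon_j,\sigma_j)$ lies in $X$, so there exist $u_j\in \WSob^{0,1}_p$, $\pi_j\in L_{r_0}$ satisfying the non-Newtonian Navier--Stokes system \eqref{eq:applNS:eqn} with $u_j(0,\cdot)=u_0$. For $p\ge \tfrac{3d+2}{d+2}$, one has $(u_j\otimes u_j)\cdot\nabla u_j \in L_1((0,T)\times \Omega)$, so that testing the momentum equation with $u_j$ itself is admissible; using $\div u_j=0$ and symmetry of $\sigma_j$ yields the strong energy equality
\begin{equation*}
    \tfrac{1}{2}\norm{u_j(T)}{L_2}^2 + \int_0^T\int_\Omega \sigma_j\cdot\epsilon_j\dx\dt = \tfrac{1}{2}\norm{u_0}{L_2}^2.
\end{equation*}
In particular, the non-negativity of $\tfrac 12\norm{u_j(T)}{L_2}^2$ gives the crucial upper bound
\begin{equation*}
    \int_0^T\int_\Omega \sigma_j\cdot \epsilon_j\dx\dt \le \tfrac{1}{2}\norm{u_0}{L_2}^2.
\end{equation*}

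Inserting the coercivity condition \ref{it:assf:coercive} into the bound $I(\epsilon_j,\sigma_j)\le C$, we find
\begin{equation*}
    C\ge \int_0^T\int_\Omega f(\epsilon_j,\sigma_j)\dx\dt
    \ge C_2\,(\norm{\epsilon_j}{L_p}^p + \norm{\sigma_j}{L_q}^q)
    - C_3\,\lebm[d+1]((0,T)\times \Omega)
    - C_4\int_0^T\int_\Omega \epsilon_j\cdot\sigma_j \dx\dt.
\end{equation*}
Combining with the bound on the pairing yields
\begin{equation*}
    \norm{\epsilon_j}{L_p}^p + \norm{\sigma_j}{L_q}^q
    \le \tfrac{1}{C_2}\left(C + C_3\,\lebm[d+1]((0,T)\times \Omega) + \tfrac{C_4}{2}\norm{u_0}{L_2}^2\right),
\end{equation*}
which is independent of $j$. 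Since $1<p,q<\infty$, the spaces $L_p$ and $L_q$ are reflexive, so $\{(\epsilon_j,\sigma_j)\}_{j\in\IN}$ admits a weakly convergent subsequence in $(L_p\times L_q)((0,T)\times\Omega)$. This establishes relative weak compactness of the sublevel set.

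The main obstacle is really the simultaneous presence of the $L_p$-norm of $\epsilon$ and the $L_q$-norm of $\sigma$, coupled by the indefinite term $-C_4\,\epsilon\cdot\sigma$ in \ref{it:assf:coercive}. Convexity alone cannot handle this, and the restriction $p\ge\tfrac{3d+2}{d+2}$ enters exactly because it is the threshold needed to justify testing the momentum equation with $u$ itself (so that the convective term yields $L_1$-integrability). Below this threshold one has to impose the energy inequality a priori, as built into \ref{it:defX:kinEnergyBound} of Definition~\ref{def:X}, and coercivity then has to be argued differently -- this is the source of the dichotomy between Results \ref{it:mainResults:2} and \ref{it:mainResults:3}.
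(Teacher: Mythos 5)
Your proof is correct and follows essentially the same route as the paper's: it reduces coercivity to a uniform $(L_p\times L_q)$-bound, uses the strong energy equality (justified by $p\ge \tfrac{3d+2}{d+2}$, which makes testing the momentum equation with $u$ admissible and makes the convective term vanish) to bound $\int \epsilon\cdot\sigma$ by $\tfrac12\norm{u_0}{\leb 2}^2$, and then plugs this into the coercivity condition \ref{it:assf:coercive}. The only superficial difference is that the paper arrives at the energy bound via integration by parts starting from $\int\epsilon\cdot\sigma = -\int_0^T \langle u, \div\sigma\rangle\,\mathrm{d}t$ rather than writing the energy equality first, but this is the same computation in a different order.
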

\begin{proof}
	It suffices to produce a uniform bound on $\norm{\epsilon}{\leb p}^p+\norm{\sigma}{\leb q}^q$ whenever $I(\epsilon,\sigma)\le  C$ for some $ C\ge 0$.
	Note that this immediately implies that $(\epsilon,\sigma)\in X$.
	By rewriting the coercivity estimate \ref{it:assf:coercive}, we see
	\begin{equation*}
		C_1(\norm{\epsilon}{\leb p}^p+\norm{\sigma}{\leb q}^q)
		\le I(\epsilon,\sigma)
		+ C_2\, \int_{(0,T)\times \Omega} \epsilon\cdot \sigma\dx[(t,x)]
		+ C_3.
	\end{equation*}
	Since $\tfrac 12 (\nabla +\nabla^T)u = \epsilon$, we can integrate by parts to estimate the double integral with
	\begin{align*}
		\int_{(0,T)\times \Omega} \epsilon\cdot \sigma\dx[(t,x)]
		=&  -\int_0^T \pair{u,\div\sigma}{\sobW{1}{p}\times \sobW{-1}q}\dx[t]
		\\  =&  -\int_0^T \pair{u,\dell_t u + \div(u\otimes u - E_3\pi)}{\sobW{1}{p}\times \sobW{-1}q} \dx[t] 
		\\  =&  \tfrac{1}{2}
		\left(
		\norm{u_0}{\leb 2}^2
		-\norm{u(T,\cdot)}{\leb 2}^2
		\right)
		\\	\le& \tfrac{1}{2}\norm{u_0}{\leb 2}^2.
	\end{align*}
	This implies the uniform bound.
	Note that we crucially used $p\ge \tfrac{3d+2}{d+2}$ so that we have $r_0=q$ and $\dell_t u\in \sobW{0,-1}{q}$.
	This condition ensures that we can test the equation with $u$ itself and that the integral over $u\cdot \div(u\otimes u)$ vanishes.
\end{proof}

\begin{theorem}[Result \ref{it:mainResults:2}]\label{thm:existenceMinimisersLargeP}
	Let $p\ge \tfrac{3d+2}{d+2}$.
	Then $I$ admits a minimiser in $ (\leb p\times \leb q ) ((0,T)\times\Omega )$.
\end{theorem}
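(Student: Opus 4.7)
The plan is to apply the direct method of the calculus of variations, relying on the two ingredients already in place: sequential weak lower semicontinuity (Theorem~\ref{thm:applNS:lsc}) and coercivity (Lemma~\ref{lem:applNS:coercive}). The only genuinely non-automatic point is the finiteness of $\inf I$.

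First, I would exhibit a competitor $(\bar\epsilon,\bar\sigma)\in X$ with $I(\bar\epsilon,\bar\sigma)<\infty$. In the regime $p\ge \tfrac{3d+2}{d+2}$, the classical existence results of Ladyzhenskaya--Lions \cite{Lady1,Lady2,Lady3,Lionsbook} furnish a Leray--Hopf weak solution $\bar u$ of the power-law system \eqref{intro:NS:2} with $\mu_0=1$ and initial datum $u_0$. Setting $\bar\epsilon \coloneqq \tfrac{1}{2}(\nabla\bar u + (\nabla\bar u)^T)$ and $\bar\sigma \coloneqq |\bar\epsilon|^{p-2}\bar\epsilon$, the pair $(\bar\epsilon,\bar\sigma)$ verifies \ref{it:defX:pqInteg}--\ref{it:defX:bc}; the kinetic-energy bound \ref{it:defX:kinEnergyBound} follows automatically from Remark~\ref{rem:applNS:wellPosed}~\ref{it:applNS:wellPosed:LinftyBound}. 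Since $|\bar\sigma|^q=|\bar\epsilon|^p$, the growth assumption \ref{it:assf:growth} then yields
\begin{equation*}
    I(\bar\epsilon,\bar\sigma) \le C_1\int_{(0,T)\times\Omega} \bigl(1+|\bar\epsilon|^p+|\bar\sigma|^q\bigr)\dx[(t,x)] < \infty.
\end{equation*}

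Next, take any minimising sequence $\{(\epsilon_j,\sigma_j)\}_{j\in\IN}\subset X$ with $I(\epsilon_j,\sigma_j)\to \inf I<\infty$. By Lemma~\ref{lem:applNS:coercive} the sequence is uniformly bounded in $(\leb p\times\leb q)((0,T)\times\Omega)$, so along a subsequence $(\epsilon_j,\sigma_j)\weakto(\epsilon_\ast,\sigma_\ast)$ weakly in $\leb p\times \leb q$. The weak closedness of $X$ recorded in Remark~\ref{rem:applNS:wellPosed}~\ref{it:applNS:wellPosed:u_cptMap} forces $(\epsilon_\ast,\sigma_\ast)\in X$, and Theorem~\ref{thm:applNS:lsc} (applicable since $\tfrac{3d+2}{d+2}>\tfrac{2d}{d+2}$) gives
\begin{equation*}
    I(\epsilon_\ast,\sigma_\ast)\le \liminf_{j\to\infty} I(\epsilon_j,\sigma_j) = \inf I,
\end{equation*}
so $(\epsilon_\ast,\sigma_\ast)$ is a minimiser.

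I expect the construction of the admissible competitor to be the main conceptual point, since it relies on classical (non-variational) existence theory rather than the machinery developed in this paper. The threshold $p\ge \tfrac{3d+2}{d+2}$ is sharp for two reasons that coincide at this exponent: it is exactly the integrability at which $u$ itself becomes a legal test function for the convective term $\div(u\otimes u)$, and this underlies both the existence of the power-law Leray--Hopf solution in step one and the kinetic-energy estimate \ref{it:defX:kinEnergyBound} exploited by coercivity.
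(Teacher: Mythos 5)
Your proof is correct and follows the paper's own route exactly: the paper's proof of Theorem~\ref{thm:existenceMinimisersLargeP} is verbatim the direct method built on Lemma~\ref{lem:applNS:coercive} (coercivity) and Theorem~\ref{thm:applNS:lsc} (weak lower semicontinuity), extracting a weak limit of a minimising sequence. The one thing you add that the paper elides is the explicit verification that $\inf I < \infty$ via a power-law Leray--Hopf competitor $(\bar\epsilon,\bar\sigma)$; the paper silently assumes a minimising sequence lies in a sublevel set $\{I\le C\}$ so that coercivity applies. Your addition is a genuine improvement in rigour (without it, the coercivity step is unjustified unless one accepts the vacuous reading that every pair minimises when $\inf I=\infty$), but it does not change the overall strategy. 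The small calculation $|\bar\sigma|^q=|\bar\epsilon|^{(p-1)q}=|\bar\epsilon|^p$ and the invocation of \ref{it:applNS:wellPosed:LinftyBound} for \ref{it:defX:kinEnergyBound} are both correct.
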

\begin{proof}
	We follow the structure of the direct method in the calculus of variation.
	Clearly, the functional is bounded below by zero, so that we may consider a sequence $\{w_j\}_{j\in\IN}\subset (\leb p\times \leb q )$ so that $I(w_j)\conv \inf I$.
	Lemma~\ref{lem:applNS:coercive} implies that $w_j$ admits a (not relabelled) weakly convergent subsequence $w_j\wconv w$ in $ (\leb p\times \leb q ) ((0,T)\times \Omega )$.
	According to Theorem~\ref{thm:applNS:lsc}, we find
	\begin{equation*}
		I(w)\le\liminf_{j\to\infty} I(w_j) = \inf I.
	\end{equation*}
	This proves the claim.
\end{proof}
\section{Subcritical exponents}\label{sec:7_applNS_smallP}

In the previous section, by proving weak lower-semicontinuity and coercivity, we derived the existence of minimisers for the range of exponents $p \ge \tfrac{3d+2}{d+2}$. 
Observe that, in the PDE setting, this corresponds to classical existence results for solutions of the non-Newtonian Navier--Stokes equations, cf. \cite{Lady1,Lady2,Lady3,Lionsbook}. 
The case $\tfrac{2d}{d+2} < p < \tfrac{3d+2}{d+2}$ is naturally more challenging. 
This section aims to discuss what is (and what is not) possible with our ansatz.

The difficulty lies (similarly to the PDE approach) in getting a correct, natural coercivity condition. 
In particular, in previous calculations, using the equation we rewrote
\[
- \int_0^T \int_{\IT_d} \epsilon \cdot \sigma \dx\dx[t] = \int_0^T \int_{\IT_d} ( u \otimes u) \cdot \nabla u \dx\dx[t].
\]
Due to the identity $( u \otimes u) \cdot \nabla u = \tfrac{1}{2}\divergence(\vert u \vert^2 u)$, the latter integrand is zero whenever it is well-defined, e.g. $\vert u \vert^2 u \in \WSob^{1,1}$; this is true in particular if $p \ge \tfrac{3d+2}{d+2}$.

The difficulty with dealing with this term is reflected in PDE-based approaches by the non-uniqueness of solutions in suitable Sobolev spaces with weaker integrability.
See e.g. \cite{BV,BMS,DLS3} and \cite{ABC}, where non-uniqueness in the critical space is shown under added forcing. 
On the other hand, showing that there are Leray-Hopf solutions is still possible in the PDE setting; see for instance \cite{Malek1,DRW,BDS}.

The plan of this section is to connect those results with our approach. In particular,
\begin{itemize}
	\item in Subsection \ref{sec:7_applNS_smallP:1_LerayHopf} we show that under strong assumptions on $f$ (so that it corresponds to certain constitutive laws), we recover known existence results for Leray--Hopf solutions via lower semicontinuity arguments, adding a variational flavour to the proofs of \cite{Malek1,DRW} etc.;
	\item in Subsection \ref{sec:7_applNS_smallP:2_discussionSmallP} we explain that this is not easily extendable to general $f$;
	\item in Subsection \ref{sec:7_applNS_smallP:3_bonusEnergyBound} we therefore briefly discuss a solution concept where an energy inequality is already incorporated.
\end{itemize}


\subsection{Existence of Leray--Hopf solutions} \label{sec:7_applNS_smallP:1_LerayHopf}
As advertised, we now consider a quite special setup for the function $f$. Let $W \colon \IR^{d \times d}_{\sym,0} \to [0,\infty)$ be a function that satisfies
\begin{enumerate} [label=(W\arabic*)]
	\item \label{it:W1} $W \in C^1(\IR^{d \times d}_{\sym,0},[0,\infty))$ is convex;
	\item \label{it:W2}$  C_1\,  (1+ \vert \epsilon \vert^p)\ge W(\epsilon) \ge C_2\, \vert \epsilon \vert^p - C_3$;
	\item \label{it:W3} $\vert DW(\epsilon) \vert \leq C_4\, (1 + \vert \epsilon \vert^{p-1})$.
\end{enumerate}
We remark that \ref{it:W3} follows from \ref{it:W1} and \ref{it:W2}. 
Consider the constitutive law $\sigma (\epsilon) = DW(\epsilon)$ that is represented by the set of zeros of
\begin{equation}\label{eq:lerayHopf:deff}
	f(\epsilon,\sigma) = W(\epsilon) + W^{\ast}(\sigma) - \epsilon \cdot \sigma,
\end{equation}
where $W^{\ast}$ is the convex conjugate of $W$.

We are now able to prove the following result that is well-known (cf. \cite{FMS,DRW,BDS}) using the variational methods of  Section~\ref{sec:4_wlsc:2_generalExponents}. Note that, in spirit, the proof is not so much different from the classical proofs: 
Also these use (more intricate) truncations to show convergence of the non-linearity $\epsilon \mapsto DW(\epsilon)$ of some approximating sequence. 
Fortunately, in our case, this work has been done in Section~\ref{sec:4_wlsc:2_generalExponents}. 
In particular, the main steps comprise of restoring equi-integrability and using the truncation of Lemma~\ref{lem:DiffOpVsEquiInt}. 
For simplicity, we only formulate it on the torus, but note that the method of proof works whenever the boundary conditions guarantee a sequence of solutions to regularised systems that are bounded in $\WSob^1_p$, which is the case for Dirichlet boundary conditions on Lipschitz domains.
\begin{theorem}[Result \ref{it:mainResults:4}]\label{thm:existenceLerayHopf}
	Let $p>\tfrac{2d}{d+2}$ and $W$ satisfy \ref{it:W1}--\ref{it:W3}. Then there exists a $\delta>0$, such that for any $u_0 \in L_2(\IT_d)$ with $\div u_0=0$ there exists a solution $u \in L_p((0,T);\WSob^1_p) \cap \WSob^1_{1+\delta}((0,T);(\WSob^{-1}_{1+\delta}))$ of the system 
	\begin{equation} \label{eq:NS}
		\begin{cases}
			\partial_t u + \divergence( u \otimes u) = \divergence (DW(\epsilon)) - \nabla \pi
		&	
			\text{in } (0,T)\times \IT_{d+1}
		, \\
			\divergence u =0
		&	
			\text{in } (0,T)\times \IT_{d+1}
		, \\
			\epsilon= \tfrac{1}{2}(\nabla u +(\nabla u)^T)
		&	
			\text{in } (0,T)\times \IT_{d+1}
		, \\
			u(0,\cdot) = u_0
		&
			\text{on }\IT_{d+1}.
		\end{cases}
	\end{equation}
	In addition, for almost every $t \in [0,T]$ $u$ satisfies the energy inequality
	\begin{equation} \label{eq:ei}
		\tfrac{1}{2} \norm{u(0,\cdot)}{\leb 2}^2 \geq  \tfrac{1}{2}\norm{u(t,\cdot)}{\leb 2}^2  + \int_0^t \int_{\IT_d} DW(\epsilon(s,x))\cdot \epsilon(s,x) \dd x \dd s.
	\end{equation}
\end{theorem}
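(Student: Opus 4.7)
The plan is to combine a standard approximation scheme with Corollary~\ref{thm:pseudoAnisotropicSuff}, which provides the decisive mechanism for identifying the constitutive relation $\sigma = DW(\epsilon)$ in the limit. In contrast to classical PDE proofs (e.g.~\cite{Malek1,BDS,FMS}) where Lipschitz truncation is applied directly to the equation to recover $\sigma = DW(\epsilon)$, here the truncation is encoded internally in Lemma~\ref{lem:DiffOpVsEquiInt}, which underpins the lower-semicontinuity Corollary~\ref{thm:pseudoAnisotropicSuff}.

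First I would construct a sequence of approximate solutions $(u_k,\epsilon_k,\sigma_k)_{k\in\IN}$ with the following properties: (a) $\sigma_k = DW(\epsilon_k)$ holds pointwise, so $f(\epsilon_k,\sigma_k)\equiv 0$; (b) $(u_k,\epsilon_k,\sigma_k)$ satisfies a perturbed version of \eqref{eq:NS} whose perturbation vanishes as $k\to\infty$; (c) the strong energy equality yields uniform bounds $u_k\in \leb\infty(\leb 2)\cap \leb p(\sobW 1 p)$ and, via \ref{it:W3}, $\sigma_k\in \leb q$. A Galerkin scheme on finite-dimensional divergence-free subspaces or a Leray-type mollification of the convection term both satisfy these requirements. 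Extracting subsequences, I obtain $(\epsilon_k,\sigma_k)\weakto (\epsilon,\sigma)$ in $(\leb p\times\leb q)((0,T)\times\IT_d)$. Since $p>\tfrac{2d}{d+2}$, the Aubin--Lions argument in Remark~\ref{rem:applNS:wellPosed}\ref{it:applNS:wellPosed:u_cptMap} gives strong convergence $u_k\to u$ in $\leb{2r}$ for some $1<r<r_0$, whence $u_k\otimes u_k\to u\otimes u$ strongly and
\[
\widetilde{\A}(\epsilon_k,\sigma_k)=-\Theta(u_k\otimes u_k)\longrightarrow \widetilde{\A}(\epsilon,\sigma)=-\Theta(u\otimes u)
\]
strongly in $(\WSob^{-1,-2}_{r,r}\times\sobW{0,-2}{r})((0,T)\times\IT_d)$.

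The main step -- and the likely obstacle -- is the identification $\sigma = DW(\epsilon)$ a.e. The integrand $f(\hat\epsilon,\hat\sigma) = W(\hat\epsilon)+W^\ast(\hat\sigma)-\hat\epsilon\cdot\hat\sigma$ is $\widetilde{\A}$-polyconvex by Lemma~\ref{lem:verifyAssConstitLaws}\ref{it:propWtoF:Aqcvx}, hence $\widetilde{\A}$-quasiconvex, and satisfies the growth \ref{it:assf:growth}. Applying Corollary~\ref{thm:pseudoAnisotropicSuff},
\[
\int\limits_{(0,T)\times\IT_d} f(\epsilon,\sigma)\dx[(t,x)]\leq \liminf_{k\to\infty}\int\limits_{(0,T)\times\IT_d} f(\epsilon_k,\sigma_k)\dx[(t,x)]=0,
\]
where the equality uses property (a). Since $f\geq 0$ with equality if and only if $\sigma = DW(\epsilon)$ (Lemma~\ref{lem:verifyAssConstitLaws}\ref{it:propWtoF:gradIffVanish}), we conclude $\sigma = DW(\epsilon)$ a.e. The energy inequality~\eqref{eq:ei} follows by passing to the liminf in the strong energy equality for $u_k$, using weak lower semicontinuity of $\norm{u(t)}{\leb2}^2$ and convex lower semicontinuity of the dissipation $\int DW(\epsilon)\cdot\epsilon\dx[(t,x)]$.
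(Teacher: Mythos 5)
Your proposal correctly identifies the key mechanism — applying the pseudo-differential lower-semicontinuity result (Corollary~\ref{thm:pseudoAnisotropicSuff}, which packages the Lipschitz truncation of Lemma~\ref{lem:DiffOpVsEquiInt}) to a sequence with $f(\epsilon_k,\sigma_k)\equiv 0$ so as to identify $\sigma=DW(\epsilon)$ in the limit — and your treatment of the energy inequality via convex lower semicontinuity is actually cleaner than the paper's, since you do not alter the constitutive law and hence do not need the monotone family $W_\eta^\ast \geq W_{\eta_0}^\ast$. The divergence is in how the approximate solutions are manufactured, and this is where a genuine gap appears.

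The paper regularizes the \emph{constitutive law} by setting $W_\eta(\hat\epsilon)=W(\hat\epsilon)+\eta^{r'}\abs{\hat\epsilon}^{r'}/r'$ with $r'>\tfrac{3d+2}{d+2}$; the regularized system then falls into the classical Ladyzhenskaya/Lions regime, where existence of exact PDE solutions is off-the-shelf. The price is that $\sigma_\eta:=DW(\epsilon_\eta)$ (with the \emph{original} $W$) is no longer the stress appearing in the regularized equation, and one must show the extra term $DW_\eta(\epsilon_\eta)-\sigma_\eta=\eta\abs{\eta\epsilon_\eta}^{r'/r}$ vanishes in $\leb r$. You instead propose regularizing the \emph{convective term} via Galerkin or Leray mollification, keeping $\sigma_k=DW(\epsilon_k)$ exact. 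With a pure Galerkin scheme your property (b) is not verified: the equation holds only after projection onto the finite-dimensional subspace, so $\widetilde\A(\epsilon_k,\sigma_k)=-\Theta(u_k\otimes u_k)+(I-P_n)[\cdots]$, and convergence of the residual in the anisotropic negative space $\WSob^{-1,-2}_{r,r}\times\sobW{0,-2}{r}$ is not established — this norm is precisely what Corollary~\ref{thm:pseudoAnisotropicSuff} sees, so the gap is material. The Leray-mollified option avoids a residual but then you must \emph{prove} existence for the mollified system under \ref{it:W1}--\ref{it:W3} and $p>\tfrac{2d}{d+2}$; doing so by Galerkin plus Minty's trick for the monotone operator $DW$ amounts to invoking exactly the classical machinery the paper's variational route is meant to bypass, so that option is somewhat circular as an alternative proof. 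The paper's choice of regularizing $W$ rather than the nonlinearity is precisely what lets it cite classical existence without re-proving it.
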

\begin{remark}
	Note that \eqref{eq:ei} also implies condition \ref{it:defX:kinEnergyBound} with $C_E=2$, just like in \ref{it:applNS:wellPosed:LinftyBound}, hence there is no dependency on $C_E$ in this theorem.
\end{remark}

\begin{proof}[Proof of Theorem~\ref{thm:existenceLerayHopf}]
	Set $r= 1+\delta$ and choose $r\in(1,\min\{p,q,\tfrac{p}{2}\tfrac{d+2}{d},\tfrac{3d+2}{2d}\})$ so that $r'=\tfrac{r}{r-1}>\tfrac{3d+2}{d+2}$.
	We approximate $W$ with $W_\eta$, which satisfies stronger growth conditions:
	\begin{equation} \label{def:Wj}
		W_\eta(\hat\epsilon) \coloneqq W(\hat\epsilon) + \eta^{r'} \frac{\vert \hat\epsilon \vert^{r'}}{r'}
		,\quad \hat\epsilon\in \IR_{\sym,0}^{d\times d}.
	\end{equation}
	The conjugate function $W_\eta^\ast$ can be written as 
	\begin{equation*}
		W_\eta^\ast(\hat\sigma) = \inf_{\hat\sigma'} \left(W^\ast(\hat \sigma') + \eta^{-r} \frac{\abs{\hat\sigma-\hat \sigma'}^{r}}{r} \right),\quad \hat\sigma\in \IR_{\sym,0}^{d\times d}
	\end{equation*}
	(see \cite[Proposition 15.1.]{bauschke2017}).
	Let $f$ resp. $f_\eta$ be defined according to \eqref{eq:lerayHopf:deff} and denote with $I$ resp. $I_\eta$ the corresponding functionals in Definition~\ref{def:applNS:defIonX}.
	
	The classical existence theory for non-Newtonian fluids with large exponents (e.g. \cite{Lady1}) provides us with a solution $u_\eta\in \leb \infty(\leb 2)\cap \sobW{0,1}{r'}$ to the system
	\begin{equation} \label{approx:system}
		\begin{cases}
			\partial_t u + \divergence( u \otimes u) = \divergence (DW_\eta(\epsilon)) - \nabla \pi
			&	
			\text{in } (0,T)\times \IT_{d+1}
			, \\
			\divergence u =0
			&	
			\text{in } (0,T)\times \IT_{d+1}
			, \\
			\epsilon= \tfrac{1}{2}(\nabla u +(\nabla u)^T)
			&	
			\text{in } (0,T)\times \IT_{d+1}
			, \\
			u(0,\cdot) = u_0
				&	
			\text{on } \IT_{d+1}
		\end{cases}
	\end{equation} 
	for each $\eta>0$. 
	Due to the improved integrability, we may test with $u_\eta$ and derive the energy equality
	\begin{equation}\label{eq:exLerayHopf:energyEq}
		\tfrac{1}{2}\norm{u_0}{\leb 2}^2 
		= \tfrac{1}{2}\norm{u_\eta(t)}{\leb 2}^2 + \int_0^t\int_{\IT_d} W_\eta(\epsilon_\eta) + W_\eta^\ast(D W_\eta(\epsilon_\eta))\dx\ds,\quad   
	\end{equation}
	for a.e. $ t\in(0,T) $.
	The growth assumption on $W$ in \ref{it:W2} yields $W_\eta^\ast \ge -C$ uniformly in $\eta$, so that
	\begin{equation}\label{eq:exLerayHopf:uniformBound}
		\sup_\eta \norm{u_\eta}{\leb\infty(\leb 2)}^2 + \norm{\epsilon_\eta}{\leb p}^p + \norm{\eta\,\epsilon_\eta}{\leb {r'}}^{r'}\le C (1+\norm{u_0}{\leb 2}^2).
	\end{equation}
	We extract a subsequence so that $\epsilon_\eta\wconv \epsilon$ in $\leb p$, and $\eta\,\epsilon_\eta\wconv 0$ in $\leb {r'}$ as $\eta\to 0$.
	Furthermore, we set $\sigma_\eta\coloneqq  DW (\epsilon_\eta)$ and assume $\sigma_\eta\wconv \sigma$ in $\leb q$, which is possible due to \ref{it:W3}.
	Note that by Lemma  \ref{lem:verifyAssConstitLaws} \ref{it:propWtoF:gradIffVanish}, we have $I(\epsilon_\eta,\sigma_\eta)=0$ for each $\eta>0$.
	To apply Corollary \ref{thm:pseudoAnisotropicSuff}, we only have to verify
	\begin{equation*}
		\widetilde{\A} (\epsilon_\eta,\sigma_\eta) = \Theta(DW_\eta(\epsilon_\eta)-\sigma_\eta-u_\eta\otimes u_\eta)
		\conv \widetilde {\A}(\epsilon,\sigma)
		\quad\text{in }(\sobW{-1,0}{r}+\sobW{0,-2}r)\times \sobW{0,-2}r.
	\end{equation*}
	However, as argued in Remark \ref{rem:applNS:wellPosed} \ref{it:applNS:wellPosed:u_cptMap}, we have $u_\eta\conv u$ in $\leb {2r}$ and due to the uniform bound \eqref{eq:exLerayHopf:uniformBound}, 
	\begin{equation*}
		\abs{DW_\eta(\epsilon_\eta)-\sigma_\eta} = \eta\, \abs{\eta\, \epsilon_\eta}^{r'/r}\conv 0
		\quad \text{in }\leb r.
	\end{equation*}
	This also implies $(\epsilon,\sigma)\in X$ and we obtain
	\begin{equation*}
		0=\liminf_{\eta\to0}I(\epsilon_\eta,\sigma_\eta) \ge I(\epsilon,\sigma)\ge 0.
	\end{equation*}
	With Lemma~\ref{lem:verifyAssConstitLaws} \ref{it:propWtoF:gradIffVanish}, we conclude $\sigma = D W(\epsilon)$ almost everywhere.
	
	For the energy inequality, observe that $W_\eta\ge W$ and $W_\eta^\ast\ge W_{\eta_0}^\ast$, where $\eta<\eta_0$ ($W_\eta^{\ast}$ is monotonically increasing as $\eta \to 0$).
	For almost every $t$ so that $u_\eta(t)\conv u(t)$ in $\leb 2$, we can estimate \eqref{eq:exLerayHopf:energyEq} from below to get
	\begin{equation*}
		\tfrac{1}{2}\norm{u_0}{\leb 2}^2 
		\ge 
		\tfrac{1}{2}\norm{u_\eta(t)}{\leb 2}^2 
		+ \int_0^t\int_{\IT_d} W(\epsilon_\eta) + W_{\eta_0}^\ast(D W_\eta(\epsilon_\eta))\dx\ds
	\end{equation*}
	Since $W^\ast_{\eta_0}$ is convex and admits $r$-growth, and $DW_\eta(\epsilon_\eta)\wconv \sigma$ in $\leb r$, we can take the liminf in $\eta$ and send $\eta_0\to 0$ afterwards:
	\begin{equation*}
		\begin{split}
			\tfrac{1}{2}\norm{u_0}{\leb 2}^2 
			&\ge\liminf_{\eta\to0}
			\tfrac{1}{2}\norm{u_\eta(t)}{\leb 2}^2 
			+ \int_0^t\int_{\IT_d} W(\epsilon_\eta) + W_{\eta_0}^\ast(D W_\eta(\epsilon_\eta))\dx\ds
			\\	&\ge 
			\tfrac{1}{2}\norm{u(t)}{\leb 2}^2 
			+ \int_0^t\int_{\IT_d} W(\epsilon) + W_{\eta_0}^\ast(\sigma)\dx\ds
			\\	&\overset{\eta_0\to 0}{\conv} 
			\tfrac{1}{2}\norm{u(t)}{\leb 2}^2 
			+ \int_0^t\int_{\IT_d} W(\epsilon) + W^\ast(\sigma)\dx\ds
		\end{split}
	\end{equation*}
	With $I(\epsilon,\sigma)=0$, this yields the energy inequality \eqref{eq:ei}.
\end{proof}


\subsection{Remarks on the proof of Theorem~\ref{thm:existenceLerayHopf}} \label{sec:7_applNS_smallP:2_discussionSmallP}
Taking a closer look at the proof of Theorem~\ref{thm:existenceLerayHopf}, we approximate the problem by minimising a sequence of regularised functionals (involving $W_\eta$), for which we know that a minimiser exists. 
We then argue by weak lower semicontinuity that the limit must also be a minimiser.

At first glance, it appears that this proof can easily be generalised to a fully variational setting: 
We approximate the functional $I$ by functionals $I_\eta$ and then prove that $\{I_\eta\}_{\eta}$ $\Gamma$-converges to the functional $I$ with respect to weak convergence in suitable Sobolev spaces.

While the existence of such an $I_\eta$ is not completely ruled out, there are some complicating factors. 
First of all, observe that in the previous proof of Theorem~\ref{thm:existenceLerayHopf} we indeed show some kind of liminf-inequality; 
but \emph{we do not show} a limsup inequality: 
As we have the natural bound $I(u,\sigma) \geq 0$, finding a pair $(u,\sigma)$ that satisfies $I(u,\sigma)=0$ is already enough to infer that is a minimiser.

In particular, we \emph{do not} need to show that any $(u,\sigma)$ with $I(u,\sigma)$ can be approximated by regularised $(u_\eta,\sigma_\eta)$; 
i.e. if Leray--Hopf solutions are non-unique (which is, up to now, an open questions, also cf. \cite{JS}), due to the uniqueness of solutions to $I_\eta$, such a limsup inequality is ruled out.

Another argument \emph{against} such an approximation argument is as follows: 
The system involving $I_\eta$ is a regularised system, where $u_\eta$ enjoys some higher integrability properties. 
Consequently, $u_\eta$ satisfies at least some kind of energy inequality, for instance, in the setting of the previous subsection, a generalisation of \eqref{eq:exLerayHopf:energyEq},
\[
I_{\eta}(u_\eta,\sigma_\eta) + \tfrac{1}{2}\norm{u_0}{\leb 2}^2 
\geq \tfrac{1}{2}\norm{u_\eta(t)}{\leb 2}^2 + \int_0^t\int_{\IT_d} W_\eta(\epsilon_\eta) + W_\eta^\ast(\sigma_\eta)\dx\ds,\quad \text{for a.e. } t\in(0,T).
\]
Observe that the right-hand side of such an inequality is weakly lower semicontinuous. 
If the limiting $u$ does \emph{not} satisfy an energy inequality, it is therefore hardly possible that the limsup inequality 
\[
\limsup_{\eta \to 0} I_\eta(u_\eta,\sigma_\eta) = I(u,\sigma)
\]
is satisfied.


\subsection{Assuming an additional energy inequality}\label{sec:7_applNS_smallP:3_bonusEnergyBound}

The primary obstruction in constructing a minimiser in the regime $p\in(\tfrac{2d}{d+2}, \tfrac{3d+2}{d+2})$ is the potential failure of coercivity.
To combat this, we will consider flows with an additional energy bound. 
This energy bound might seem artificial, but is, to an extent, also physically reasonable.

If we rearrange the inequality due to \ref{it:assf:coercive}, we obtain
\begin{equation*}
	I(\epsilon,\sigma)+ C_3 \int_{(0,T)\times\IT_d}\epsilon\cdot\sigma\dx[(t,x)] + C_4\, T
	\ge C_2\, (\norm{\epsilon}{\leb p}^p+\norm{\sigma}{\leb q}^q),\quad (\epsilon,\sigma)\in X.
\end{equation*}
Natural bounds on the terms on the left-hand side motivate an additional restriction on $(\epsilon,\sigma)$ that ensures the existence of minimisers in the subcritical regime:
First, since we are interested in minimisers of $I$, it makes sense to restrict our attention only to functions that satisfy 
\begin{equation*}
	C_0\ge I(\epsilon,\sigma),
\end{equation*}
where $C_0$ is a suitable constant.
A natural choice would be
\begin{equation*}
	C_0\coloneqq I(\epsilon_{\mathrm{PL}},\sigma_{\mathrm{PL}}),
\end{equation*}
where $(\epsilon_{\mathrm{PL}},\sigma_{\mathrm{PL}})\in X$ is the solution to the power-law equation, $\sigma_\mathrm{PL} = \signabs{\epsilon_\mathrm{PL}}{p-2}$ , via Theorem~\ref{thm:existenceLerayHopf}. 

Second, inspired by the corresponding energy bound in \eqref{eq:ei}, it is physically plausible to require
\begin{equation*}
	\tfrac{1}{2}(\norm{u_0}{\leb 2}^2-\norm{u(t)}{\leb 2}^2)
	\ge\int_0^t \int_{\Omega} \epsilon\cdot\sigma \dx\dx[s]
	\quad \txtforae t\in[0,T].
\end{equation*}

Combining these two assumptions, we arrive at the following inequality:
\begin{equation}\tag*{(X5)}\label{it:defX:X5}
	C_0 + C_4 \, T + \frac{C_3}{2}\norm{u_0}{\leb 2}^2
	\ge \frac{C_3}{4}\norm{u}{\leb \infty(\leb 2)}^2 
	+ \frac{C_2}{2}\, \norm{\epsilon}{\leb p}^p
	+ \frac{C_2}{2}\, \norm{\sigma}{\leb q}^q.
\end{equation}

Note that \ref{it:defX:X5} implies \ref{it:defX:kinEnergyBound} for suitable $C_E$.

We stress that there is no guarantee that a flow satisfies \ref{it:defX:X5} and that this condition constitutes a proper restriction of the space on which we minimise. 
Nonetheless, the solution $(\epsilon_\mathrm{PL}, \sigma_\mathrm{PL})$ of the power-law equation shows that the space of functions satisfying \ref{it:defX:X5} is non-empty.
To work with this smaller set of admissible flows, we introduce the restricted functional 
\begin{equation}
	J(\epsilon,\sigma) \coloneqq  
	\begin{cases}
		\int_{(0,T)\times \IT_d} f(\epsilon,\sigma)\dx[(t,x)]
		&	\text{$(\epsilon,\sigma)$ satisfy \ref{it:defX:pqInteg}--\ref{it:defX:X5}},
		\\\infty &\text{else}.
	\end{cases}
\end{equation}

The existence of minimisers can now be easily established along the lines of Theorem~\ref{thm:existenceMinimisersLargeP}:
\begin{theorem} \label{thm:existenceSmallP}
	Assume $p>\tfrac{2d}{d+2}$ and $f$ satisfies \ref{it:assf:contNN}--\ref{it:assf:qcvx}. 
	Then the functional $J$ admits a minimiser in $\leb p\times\leb q$.
\end{theorem}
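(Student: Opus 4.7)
The strategy is a standard direct-method argument on the admissible set
\begin{equation*}
    X_{\mathrm{en}} \coloneqq \bigl\{(\epsilon,\sigma) : \text{\ref{it:defX:pqInteg}--\ref{it:defX:X5} hold}\bigr\}.
\end{equation*}
First, $X_{\mathrm{en}}$ is non-empty: by construction of $C_0$, the power-law solution $(\epsilon_{\mathrm{PL}},\sigma_{\mathrm{PL}})$ provided by Theorem~\ref{thm:existenceLerayHopf} lies in $X$, satisfies the energy inequality \eqref{eq:ei}, and hence obeys \ref{it:defX:X5}. In particular $\inf J \le C_0<\infty$ and we may fix a minimising sequence $\{(\epsilon_j,\sigma_j)\}\subset X_{\mathrm{en}}$ with $J(\epsilon_j,\sigma_j)\to \inf J$.

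The key point is that \ref{it:defX:X5} yields coercivity \emph{despite} $p<\tfrac{3d+2}{d+2}$: it directly bounds $\|\epsilon_j\|_{L_p}^p+\|\sigma_j\|_{L_q}^q+\|u_j\|_{L_\infty(L_2)}^2$ uniformly in $j$ in terms of $C_0, C_4 T$ and $\|u_0\|_{L_2}^2$. Passing to a subsequence we obtain $(\epsilon_j,\sigma_j)\wconv (\epsilon,\sigma)$ in $L_p\times L_q$ and $u_j\weakstar u$ in $L_\infty(L_2)$, where $u$ is the velocity associated with $(\epsilon,\sigma)$ via \ref{it:defX:NS}. Because $p>\tfrac{2d}{d+2}$, Remark~\ref{rem:applNS:wellPosed}~\ref{it:applNS:wellPosed:u_cptMap} applies: the set $X$ is weakly closed in $L_p\times L_q$, so $(\epsilon,\sigma)\in X$, and moreover $u_j\to u$ strongly in $L_{2r}$ for some $r>1$. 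This strong convergence of the nonlinearity is what permits the subsequent application of the lower semicontinuity theorem.

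It remains to verify that the limit satisfies \ref{it:defX:X5} and that $J$ is lower semicontinuous. For the former, lower semicontinuity of norms under weak/weak-$\ast$ convergence gives
\begin{equation*}
    \tfrac{C_3}{4}\|u\|_{L_\infty(L_2)}^2 + \tfrac{C_2}{2}\|\epsilon\|_{L_p}^p+\tfrac{C_2}{2}\|\sigma\|_{L_q}^q
    \le \liminf_{j\to\infty}\Bigl(\tfrac{C_3}{4}\|u_j\|_{L_\infty(L_2)}^2 + \tfrac{C_2}{2}\|\epsilon_j\|_{L_p}^p+\tfrac{C_2}{2}\|\sigma_j\|_{L_q}^q\Bigr),
\end{equation*}
and the right-hand side is controlled by $C_0+C_4T+\tfrac{C_3}{2}\|u_0\|_{L_2}^2$ uniformly in $j$, so \ref{it:defX:X5} passes to $(\epsilon,\sigma)$. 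For the lower semicontinuity inequality $\int f(\epsilon,\sigma)\le\liminf \int f(\epsilon_j,\sigma_j)$, we invoke Theorem~\ref{thm:applNS:lsc}: its proof uses only the strong convergence $u_j\to u$ in $L_{2r}$ to upgrade $\widetilde\A(\epsilon_j,\sigma_j)\to\widetilde\A(\epsilon,\sigma)$ in $\WSob^{-1,-2}_{r,r}\times \WSob^{0,-2}_r$, followed by Corollary~\ref{thm:pseudoAnisotropicSuff} combined with \ref{it:assf:growth} and \ref{it:assf:qcvx}, none of which require $p\ge \tfrac{3d+2}{d+2}$. Combining, $J(\epsilon,\sigma)\le \liminf J(\epsilon_j,\sigma_j)=\inf J$, so $(\epsilon,\sigma)$ is the sought minimiser.

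The only substantive obstacle is bookkeeping around \ref{it:defX:X5}: one has to make sure the constant $C_E$ entering the definition of $X$ via \ref{it:defX:kinEnergyBound} is chosen compatibly with \ref{it:defX:X5} so that the latter inequality genuinely implies the former (with the \emph{same} $C_E$) both along the sequence and in the limit. This is a matter of choosing $C_E$ large enough in terms of $C_0,C_2,C_3,C_4,T$ once and for all. Beyond that, the argument is a direct transcription of the proof of Theorem~\ref{thm:existenceMinimisersLargeP}, with the artificial energy bound \ref{it:defX:X5} replacing the testing-with-$u$ trick that is unavailable for subcritical $p$.
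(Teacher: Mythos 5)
Your proposal is correct and takes essentially the same approach as the paper's (sketch) proof: direct method, with coercivity supplied by \ref{it:defX:X5}, stability of \ref{it:defX:X5} under weak limits via weak lower semicontinuity of norms, and Theorem~\ref{thm:applNS:lsc} for the lower semicontinuity of the integral term. Your closing remark about choosing $C_E$ compatibly is already handled in the paper's preceding note that \ref{it:defX:X5} implies \ref{it:defX:kinEnergyBound} for suitable $C_E$, so it is not an additional obstacle.
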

\begin{proof}[Proof (Sketch)]
	The proof follows the same structure as in Theorem~\ref{thm:existenceMinimisersLargeP}: 
	It suffices to show coercivity and lower semicontinuity, because then we can conclude that every minimising sequence admits a weak accumulation point, which has to be a minimiser.
	This time, we get coercivity as in Lemma~\ref{lem:applNS:coercive} immediately from \ref{it:defX:X5}.
	Since $I$ was already proven to be lower semicontinuous in the regime $p>\tfrac{2d}{d+2}$, we only need to make sure that \ref{it:defX:X5} is stable under weakly convergent sequences in $\leb p\times\leb q$. 
	This is immediate, since the right-hand side in \ref{it:defX:X5} is weakly lower semicontinuous as a sum of norms. 
\end{proof}


\bibliography{biblio.bib}

\begin{thebibliography}{10}

\bibitem{AF84}
E.~Acerbi and N.~Fusco.
\newblock Semicontinuity problems in the calculus of variations.
\newblock {\em Arch. Rational Mech. Anal.}, 86(2):125--145, 1984.

\bibitem{ABC}
D.~Albritton, E.~Bru\'e, and M.~Colombo.
\newblock Non-uniqueness of {L}eray solutions of the forced {N}avier-{S}tokes
  equations.
\newblock {\em Ann. of Math. (2)}, 196(1):415--455, 2022.

\bibitem{Amann09}
H.~Amann.
\newblock {\em Anisotropic function spaces and maximal regularity for parabolic
  problems. {P}art 1}, volume~6 of {\em Jind\u{r}ich Ne\u{c}as Center for
  Mathematical Modeling Lecture Notes}.
\newblock Matfyzpress, Prague, 2009.

\bibitem{BP}
L.~Bandeira and P.~Pedregal.
\newblock {$\mathcal {A}$}-variational principles.
\newblock {\em Milan J. Math.}, 91(2):293--314, 2023.

\bibitem{BS}
M.~Bathory and U.~Stefanelli.
\newblock Variational resolution of outflow boundary conditions for
  incompressible {N}avier-{S}tokes.
\newblock {\em Nonlinearity}, 35(11):5553--5592, 2022.

\bibitem{bauschke2017}
H.~H. Bauschke and P.~L. Combettes.
\newblock {\em Convex {Analysis} and {Monotone} {Operator} {Theory} in
  {Hilbert} {Spaces}}.
\newblock {CMS} {Books} in {Mathematics}. Springer International Publishing,
  Cham, 2017.

\bibitem{BGS}
L.~Behn, F.~Gmeineder, and S.~Schiffer.
\newblock On symmetric div-quasiconvex hulls and divsym-free
  {$L^\infty$}-truncations.
\newblock {\em Ann. Inst. H. Poincar\'e{} C Anal. Non Lin\'eaire},
  40(6):1267--1317, 2023.

\bibitem{bergh_interpolation_1976}
J.~Bergh and J.~Löfström.
\newblock {\em Interpolation {Spaces}: {An} {Introduction}}, volume 223 of {\em
  Grundlehren der mathematischen {Wissenschaften}}.
\newblock Springer, Berlin, Heidelberg, 1976.

\bibitem{BFL}
A.~Braides, I.~Fonseca, and G.~Leoni.
\newblock A-{Quasiconvexity}: {Relaxation} and {Homogenization}.
\newblock {\em ESAIM: Control, Optimisation and Calculus of Variations},
  5:539--577, 2000.

\bibitem{B17}
D.~Breit and G.~Chen.
\newblock {\em Existence theory for generalized {Newtonian} fluids}.
\newblock Mathematics in science and engineering. Elsevier / Academic Press,
  Amsterdam ; Boston, 2017.
\newblock OCLC: ocn964299850.

\bibitem{BDF}
D.~Breit, L.~Diening, and M.~Fuchs.
\newblock Solenoidal {L}ipschitz truncation and applications in fluid
  mechanics.
\newblock {\em J. Differential Equations}, 253(6):1910--1942, 2012.

\bibitem{BDS}
D.~Breit, L.~Diening, and S.~Schwarzacher.
\newblock Solenoidal {L}ipschitz truncation for parabolic {PDE}s.
\newblock {\em Math. Models Methods Appl. Sci.}, 23(14):2671--2700, 2013.

\bibitem{BV}
T.~Buckmaster and V.~Vicol.
\newblock Nonuniqueness of weak solutions to the {N}avier-{S}tokes equation.
\newblock {\em Ann. of Math. (2)}, 189(1):101--144, 2019.

\bibitem{BMS}
J.~Burczak, S.~Modena, and L.~Sz\'{e}kelyhidi.
\newblock Non uniqueness of power-law flows.
\newblock {\em Comm. Math. Phys.}, 388(1):199--243, 2021.

\bibitem{CT}
M.~Colombo and R.~Tione.
\newblock Non-classical solutions of the p-{L}aplace equation.
\newblock {\em J. Eur. Math. Soc.}, 2024.

\bibitem{contiDivCurlLemma2011}
S.~Conti, G.~Dolzmann, and S.~Müller.
\newblock The div-curl lemma for sequences whose divergence and curl are
  compact in w -1,1.
\newblock {\em Comptes Rendus. Mathématique}, 349(3--4):175--178, 2011.

\bibitem{CMO}
S.~Conti, S.~M\"uller, and M.~Ortiz.
\newblock Data-driven problems in elasticity.
\newblock {\em Arch. Ration. Mech. Anal.}, 229(1):79--123, 2018.

\bibitem{CMO2}
S.~Conti, S.~M\"{u}ller, and M.~Ortiz.
\newblock Data-driven finite elasticity.
\newblock {\em Arch. Ration. Mech. Anal.}, 237(1):1--33, 2020.

\bibitem{DF}
B.~Dacorogna and I.~Fonseca.
\newblock A-{B} quasiconvexity and implicit partial differential equations.
\newblock {\em Calc. Var. Partial Differential Equations}, 14(2):115--149,
  2002.

\bibitem{DLS2}
C.~De~Lellis and L.~Sz\'{e}kelyhidi, Jr.
\newblock The {E}uler equations as a differential inclusion.
\newblock {\em Ann. of Math. (2)}, 170(3):1417--1436, 2009.

\bibitem{DLS1}
C.~De~Lellis and L.~Sz\'{e}kelyhidi, Jr.
\newblock On admissibility criteria for weak solutions of the {E}uler
  equations.
\newblock {\em Arch. Ration. Mech. Anal.}, 195(1):225--260, 2010.

\bibitem{DLS3}
C.~De~Lellis and L.~Sz\'{e}kelyhidi, Jr.
\newblock Weak stability and closure in turbulence.
\newblock {\em Philos. Trans. Roy. Soc. A}, 380(2218):Paper No. 20210091, 16,
  2022.

\bibitem{DRW}
L.~Diening, M.~R{u}\v{z}i\v{c}ka, and J.~Wolf.
\newblock Existence of weak solutions for unsteady motions of generalized
  {N}ewtonian fluids.
\newblock {\em Ann. Sc. Norm. Super. Pisa Cl. Sci. (5)}, 9(1):1--46, 2010.

\bibitem{FM}
I.~Fonseca and S.~M\"{u}ller.
\newblock {$\mathscr A$}-quasiconvexity, lower semicontinuity, and {Y}oung
  measures.
\newblock {\em SIAM J. Math. Anal.}, 30(6):1355--1390, 1999.

\bibitem{FMP}
I.~Fonseca, S.~M\"{u}ller, and P.~Pedregal.
\newblock Analysis of concentration and oscillation effects generated by
  gradients.
\newblock {\em SIAM J. Math. Anal.}, 29(3):736--756, 1998.

\bibitem{FMS}
J.~Frehse, J.~M\'{a}lek, and M.~Steinhauer.
\newblock On analysis of steady flows of fluids with shear-dependent viscosity
  based on the {L}ipschitz truncation method.
\newblock {\em SIAM J. Math. Anal.}, 34(5):1064--1083, 2003.

\bibitem{GM12}
N.~Gigli and S.~J.~N. Mosconi.
\newblock A variational approach to the {N}avier-{S}tokes equations.
\newblock {\em Bull. Sci. Math.}, 136(3):256--276, 2012.

\bibitem{Grafakos}
L.~Grafakos.
\newblock {\em Classical {F}ourier analysis}, volume 249 of {\em Graduate Texts
  in Mathematics}.
\newblock Springer, New York, second edition, 2008.

\bibitem{GR2}
A.~Guerra and B.~Rai\c~t\u a.
\newblock Quasiconvexity, null {L}agrangians, and {H}ardy space integrability
  under constant rank constraints.
\newblock {\em Arch. Ration. Mech. Anal.}, 245(1):279--320, 2022.

\bibitem{GR}
A.~Guerra and B.~Rai\c{t}\u{a}.
\newblock On the necessity of the constant rank condition for {$L^p$}
  estimates.
\newblock {\em C. R. Math. Acad. Sci. Paris}, 358(9-10):1091--1095, 2020.

\bibitem{guerraCompensatedCompactnessContinuity2022}
A.~Guerra, B.~Raiţă, M.~R.~I. Schrecker, and B.~Raita.
\newblock Compensated compactness: Continuity in optimal weak topologies.
\newblock {\em Journal of Functional Analysis}, 283(7):109596, 2022.

\bibitem{JS}
H.~Jia and V.~Sverak.
\newblock Are the incompressible 3d {N}avier-{S}tokes equations locally
  ill-posed in the natural energy space?
\newblock {\em J. Funct. Anal.}, 268(12):3734--3766, 2015.

\bibitem{Lady1}
O.~A. Lady\v{z}enskaja.
\newblock New equations for the description of the motions of viscous
  incompressible fluids, and global solvability for their boundary value
  problems.
\newblock {\em Trudy Mat. Inst. Steklov.}, 102:85--104, 1967.

\bibitem{Lady2}
O.~A. Lady\v{z}enskaja.
\newblock Modifications of the {N}avier-{S}tokes equations for large gradients
  of the velocities.
\newblock {\em Zap. Nau\v{c}n. Sem. Leningrad. Otdel. Mat. Inst. Steklov.
  (LOMI)}, 7:126--154, 1968.

\bibitem{Lady3}
O.~A. Ladyzhenskaya.
\newblock {\em The mathematical theory of viscous incompressible flow}.
\newblock Mathematics and its Applications, Vol. 2. Gordon and Breach Science
  Publishers, New York-London-Paris, 1969.
\newblock Second English edition, revised and enlarged, Translated from the
  Russian by Richard A. Silverman and John Chu.

\bibitem{lienstromberg.etal_2022}
C.~Lienstromberg, S.~Schiffer, and R.~Schubert.
\newblock A data-driven approach to viscous fluid mechanics: the stationary
  case.
\newblock {\em Arch. Ration. Mech. Anal.}, 247(2):Paper No. 30, 63, 2023.

\bibitem{LSS}
C.~Lienstromberg, S.~Schiffer, and R.~Schubert.
\newblock A variational approach to the {{Navier}}--{{Stokes}} equations with
  shear-dependent viscosity.
\newblock {\em Advances in Calculus of Variations}, 2025.

\bibitem{Lionsbook}
J.-L. Lions.
\newblock {\em Quelques m\'{e}thodes de r\'{e}solution des probl\`emes aux
  limites non lin\'{e}aires}.
\newblock Dunod, Paris; Gauthier-Villars, Paris, 1969.

\bibitem{Malek1}
J.~M\'{a}lek, D.~Pra\v{z}\'{a}k, and M.~Steinhauer.
\newblock On the existence and regularity of solutions for degenerate power-law
  fluids.
\newblock {\em Differential Integral Equations}, 19(4):449--462, 2006.

\bibitem{MP}
J.~Matou\v{s}ek and P.~Plech\'a\v{c}.
\newblock On functional separately convex hulls.
\newblock {\em Discrete Comput. Geom.}, 19(1):105--130, 1998.

\bibitem{MO}
A.~Mielke and M.~Ortiz.
\newblock A class of minimum principles for characterizing the trajectories and
  the relaxation of dissipative systems.
\newblock {\em ESAIM Control Optim. Calc. Var.}, 14(3):494--516, 2008.

\bibitem{Morrey}
C.~B. Morrey, Jr.
\newblock Quasi-convexity and the lower semicontinuity of multiple integrals.
\newblock {\em Pacific J. Math.}, 2:25--53, 1952.

\bibitem{Mskript}
S.~M\"{u}ller.
\newblock Variational models for microstructure and phase transitions.
\newblock In {\em Calculus of variations and geometric evolution problems
  ({C}etraro, 1996)}, volume 1713 of {\em Lecture Notes in Math.}, pages
  85--210. Springer, Berlin, 1999.

\bibitem{Murat}
F.~Murat.
\newblock Compacit\'{e} par compensation: condition n\'{e}cessaire et
  suffisante de continuit\'{e} faible sous une hypoth\`ese de rang constant.
\newblock {\em Ann. Scuola Norm. Sup. Pisa Cl. Sci. (4)}, 8(1):69--102, 1981.

\bibitem{OSS}
M.~Ortiz, B.~Schmidt, and U.~Stefanelli.
\newblock A variational approach to {N}avier-{S}tokes.
\newblock {\em Nonlinearity}, 31(12):5664--5682, 2018.

\bibitem{Ostwald1929}
W.~Ostwald.
\newblock Ueber die rechnerische {Darstellung} des {Strukturgebietes} der
  {Viskosität}.
\newblock {\em Kolloid-Zeitschrift}, 47(2):176--187, Feb. 1929.

\bibitem{Penrose_1955}
R.~Penrose.
\newblock A generalized inverse for matrices.
\newblock {\em Mathematical Proceedings of the Cambridge Philosophical
  Society}, 51(3):406–413, 1955.

\bibitem{Raita}
B.~Rai\c{t}\u{a}.
\newblock Potentials for {$\mathcal A$}-quasiconvexity.
\newblock {\em Calc. Var. Partial Differential Equations}, 58(3):Paper No. 105,
  16, 2019.

\bibitem{rindlerCalculusVariations2018}
F.~Rindler.
\newblock {\em Calculus of {{Variations}}}.
\newblock Universitext. Springer International Publishing, 2018.

\bibitem{Robinson_Rodrigo_Sadowski_2016}
J.~C. Robinson, J.~L. Rodrigo, and W.~Sadowski.
\newblock {\em The Three-Dimensional Navier–Stokes Equations: Classical
  Theory}.
\newblock Cambridge Studies in Advanced Mathematics. Cambridge University
  Press, 2016.

\bibitem{ruzhansky_quantization_2010}
M.~Ruzhansky and V.~Turunen.
\newblock Quantization of {Pseudo}-differential {Operators} on the {Torus}.
\newblock {\em Journal of Fourier Analysis and Applications}, 16(6):943--982,
  Dec. 2010.

\bibitem{ruzhansky_Lp_2015}
M.~Ruzhansky and J.~Wirth.
\newblock ${L}^p$ {Fourier} multipliers on compact {Lie} groups.
\newblock {\em Mathematische Zeitschrift}, 280(3-4):621--642, Aug. 2015.

\bibitem{saitoUniformConvexityPsdirect2003}
K.-S. Saito and M.~Kato.
\newblock Uniform convexity of $\psi$-direct sums of {{Banach}} spaces.
\newblock {\em Journal of Mathematical Analysis and Applications},
  277(1):1--11, 2003.

\end{thebibliography}
\bibliographystyle{abbrv}

\end{document}